\def\R{\mathbb{R}}
\newtheorem{remark}[theorem]{\indent {Remark}}
\providecommand{\prt}[1]{\left( #1 \right)}
\providecommand{\aor}[1]{{#1}}
\providecommand{\jmhr}[1]{#1}
\providecommand{\aorev}[1]{{#1}}
\providecommand{\jmh}[1]{#1}
\providecommand{\ao}[1]{#1}
\title{On symmetric continuum opinion dynamics
%\\ \jmh{On symmetric consensus for a continuum of agents.}
}
\author{Julien M. Hendrickx\thanks{ICTEAM Institute, Universit\'{e} catholique de Louvain,
% Avenue Georges Lemaitre 4, B-1348, Louvain-la-Neuve, 
Belgium, 
({\tt julien.hendrickx@uclouvain .be}). His work is supported by the Belgian Network DYSCO (Dynamical Systems, Control, and Optimization), funded by the Interuniversity Attraction Poles Program, initiated by the Belgian Science Policy Office.}
%, and by the Concerted Research Action (ARC) Large Graphs and Networks of the French Community of Belgium.}
        \and Alex Olshevsky\thanks{Department of ISE, University of Illinois at Urbana-Champaign, ({\tt aolshev2@illinois.edu}).}
}
\begin{document}

\maketitle

\begin{abstract} This paper investigates the asymptotic behavior of some common opinion dynamic models 
in a continuum of agents. We show that as long as the interactions among the agents are 
symmetric, the distribution of the agents' opinion converges. We 
also investigate whether convergence occurs in a stronger sense than merely in distribution, namely, whether the opinion of almost
every agent converges. We show that while this is not the case in general, it becomes true under  
plausible assumptions on inter-agent interactions, namely that agents with similar opinions exert
a non-negligible pull on each other, \jmh{or that the interactions are entirely determined by their opinions via a smooth function}.
\footnotetext{\aor{A preliminary version of this paper appeared in the Proceedings of the IEEE CDC 2013.}}
\end{abstract}

\begin{keywords} multiagent systems, 
opinion dynamics, consensus, Lyapunov stability.
\end{keywords}

\begin{AMS} 93D20, 91C20, 93A14.
\end{AMS}

\pagestyle{myheadings}
\thispagestyle{plain}

\section{Introduction}

There has been much recent interest within the control community
in the study of multi-agent systems in which the agents interact according to simple,
local rules, resulting in coordinated global behavior. Unfortunately, the dynamics
describing the interactions of such systems are often time-varying and  
nonlinear and their analysis appears to be at present impossible without making considerably simplifying 
assumptions. For instance, it is a common assumption in much of the literature on multi-agent control that the graphs 
governing the inter-agent interactions satisfy some sort of long-term connectivity condition. This 
assumption is necessary due to the apparent intractability of analyzing the long-term connectivity
properties associated with the graph of a multi-agent system governed by time-varying and nonlinear
local interactions.

Encouraging results \aor{without} long-term connectivity conditions have however been obtained for opinion dynamics models, one of the simplest and most natural class of multi-agent systems
with time-varying interactions. These models have been recently
proposed (see \cite{H-K}, \cite{K00}, and the survey \cite{L2007}) to model opinion changes resulting
from repeated personal interactions among individuals and have attracted considerable attention within the 
control community (see \cite{L2005, bht09, bht10, cft12,cf12,tf11}).  Much of this attention is due to the 
similarities between opinion dynamics models and various \aor{dynamics arising in multi-agent control.} \aor{Indeed,} opinion dynamics are \aor{also} nonlinear \aor{when} the inter-agent interactions  change with time and depend on on the states of the agents. 
Consequently, it is believed that the techniques developed to rigorously analyze the asymptotic
behavior of these models will be useful in the analysis of more complex multi-agent systems whose nonlinearity
arises from state-dependent and time-varying inter-agent interactions.

It has recently been shown for opinion dynamics systems with finitely many agents that the symmetry of the inter-agent interactions (or actually a weak symmetry condition called \quotes{cut-balance}) was sufficient to guarantee the convergence of all agents, \emph{independently of any long-term connectivity condition} \cite{HT:2013}, something which makes the analysis of the asymptotic behavior of numerous systems considerably easier. 
Related observations for discrete-time systems were made in \cite{BHOT:2005,Moreau:2005,L2005} (see also references in \cite{HT:2013} \aor{as well as \cite{dimos} and \cite{chaz-dyn}}).

Such general results are lacking for systems involving infinitely many agents, or a continuous mass of agents, even though partial results have been obtained under specific assumptions on the way interactions take place \cite{cft12,bht09} (the authors of \cite{tf11} also consider systems involving a continuous mass of agents, but focuse on the existence and uniqueness of solutions, and on the possibility of approximating it by finite-dimensional systems).
Our goal in this paper is to analyze the extent to which the results obtained for finitely many agents \cite{HT:2013} or for some specific models \cite{bht09,cft12} remain valid for general opinion dynamics models.

\subsection{Model description}

We now give a precise statement of the dynamics we will study.
We consider the functions $x_t(\alpha): \aor{[0,\infty) \times} [0,1] \rightarrow \R$ which are solutions of the equation 
\begin{equation} 
\label{mainiteration} 
\dot{x}_t(\alpha) = \int_{0}^{1} w \left( t, \alpha, \beta, x_t(\alpha), x_t(\beta) \right) ( x_t(\beta) - x_t (\alpha)) ~d \beta, 
\end{equation} 
with initial condition $x_0(\alpha) = x_0$. Here $w(\cdot, \cdot, \cdot, \cdot, \cdot)$ is a nonnegative function. The equation has a natural interpretation: each agent $\alpha \aor{\in [0,1]}$ continually adjusts its 
``opinion'' $x_t(\alpha)$ to move closer to the opinions of other agents, giving to each agent $\beta$ a nonnegative weight $w \left( t, \alpha, \beta, x_t(\alpha), x_t(\beta) \right) $, which  may depend on the identities of the two agents, their opinions, and on time.

Often, the function $w(\cdot, \cdot, \cdot, \cdot, \cdot)$ is taken to be $1_{|x_t(\alpha) - x_t(\beta)| < r}$ for some ``opinion radius $r$,'' which corresponds to every agent adjusting its opinion based only on the opinions of other like-minded agents; \aor{this is the so-called Hegselmann-Krause model}. We will not be making this assumption here and will instead study the more
general case. Thus the weight agent $\alpha$ accords agent $\beta$ can vary based on the time $t$, the indices $\alpha$ and $\beta$, and the values $x_t(\alpha), x_t(\beta)$\footnote{We note that it is possible to omit the dependence on $x_t(\alpha)$ and $x_t(\beta)$ without loss of generality by appropriately modifying $w(\cdot, \cdot, \cdot, \cdot, \cdot)$. Nevertheless, we will continue writing $w(\cdot, \cdot, \cdot, \cdot, \cdot)$ as a function of five arguments  for simplicity of presentation.}. 

Several technical assumptions are necessary for Eq. (\ref{mainiteration}) to make sense. The function $w(\cdot, \cdot, \cdot, \cdot, \cdot)$ is assumed to be \aor{jointly measurable in all  variables}. A constraint on $w(\cdot, \cdot, \cdot, \cdot, \cdot)$ is needed to ensure that the integral in Eq. (\ref{mainiteration}) is finite; we will assume that $w(\cdot, \cdot, \cdot, \cdot, \cdot)$ is bounded, i.e., there exists some constant $W\aor{<\infty}$ so that $w(t, \alpha, \beta, x_t(\alpha), x_t(\beta)) \leq W$ for all values of $t, \alpha, \beta, x_t(\alpha), x_t(\beta)$. \aor{Since our focus in this paper is on properties of solutions of Eq. (\ref{mainiteration}) when they exist, we will not \jmhr{analyze in detail the} questions of existence and uniqueness; rather}  we will
assume \aor{in the body of the paper} that \ao{for each $\alpha \in [0,1]$, $x_t(\alpha)$ is 
an absolutely continuous 
function such that} Eq. (\ref{mainiteration}) is satisfied\aor{\footnote{\aor{Note that all of our main results on properties of solutions thus hold for any solution of Eq. (\ref{mainiteration}) that exists. Furthermore, we prove in Appendix A that 
existence and uniqueness of the kind of solutions we study does actually hold under some Lipschitz assumptions on the function $w(\cdot, \cdot, \cdot, \cdot, \cdot)$.} }} for \ao{almost all $t$}. We will assume that the initial distribution of opinions is bounded, i.e., $x_0(\alpha) \in [0,1]$ for all $\alpha \in [0,1]$, and that solutions do not explode in finite time, in the sense that $\sup_{\alpha\in [0,1],t \leq T} \aor{|}x_t(\alpha)\aor{|}$ is finite for every $T\geq 0$ (We will \aor{later} see that all opinions then  remain in $[0,1]$ for all\footnote{\aor{In essence, we are unable to rule out the possibility that Eq. (\ref{mainiteration}) may have certain ``pathological'' solutions which are not absolutely continuous or explode in finite time. Thus we restrict our attention in this paper to study of properties of any solutions that are well-behaved, i.e., absolutely continuous with bounded suprema. In general, whether solutions which satisfy these properties exist, or whether all solutions satisfy these properties, are open questions. However, in Appendix A we prove that subject to some continuity and Lipschitz assumptions on the function $w(\cdot, \cdot, \cdot, \cdot, \cdot)$, solutions which satisfy all the properties we have assumed here do indeed exist.}} $t$.)
 {\em We will be making these assumptions throughout the remainder of this paper \aor{(except for Appendix A on existence and uniqueness of solutions)} without mention.}

\subsection{\jmhr{Particular cases of \eqref{mainiteration}}  }

\jmhr{We now present some instantiations of Eq. \eqref{mainiteration}, and show that our general model not only encompasses many classical models having appeared in the literature, but also allows representing new classes of more complex models.}

\bigskip

\aor{\noindent {\em Example 1. Consensus of finitely many agents.} Continuous consensus dynamics have been widely studied 
in multi-agent control (see e.g., \cite{moreau, olfati, ren, cheb, HT:2013}). These are the dynamics given by
\begin{equation} \label{cons} \dot{z}_i(t) = \sum_{j=1}^n w_{ij}(t) (z_j(t)-z_i(t)), ~~~ i = 1, \ldots, n,\end{equation} where $w_{ij}(t)$ are arbitrary nonnegative numbers.  Intuitively, there are $n$ distinct agents which run dynamics that repeatedly push the values $z_i$ closer to each other with ``interaction strengths'' $w_{ij}(t)$. Under some connectivity
and regularity conditions, one can show that each $z_i(t)$ will \jmhr{converge} to the same number \cite{moreau}, which is why these dynamics are called ``consensus dynamics.'' }

\aor{Continuous consensus dynamics may be viewed as a special case of Eq. (\ref{mainiteration}). Indeed, define $\Gamma_i = [(i-1)/n, i/n]$, and for any $\alpha, \beta \in [0,1]$, if $\alpha \in \Gamma_i$ and $\beta \in \Gamma_j$, set $w(\alpha, \beta, x_t(\beta), x_t(\alpha) = nw_{ij}(t)$. 
Then it is immediate that if $\alpha \in \Gamma_i$ then $x_t(\alpha) = z_i(t)$. In other words, this particular choices of $w(\cdot, \cdot, \cdot, \cdot, \cdot)$ leads the agents to replicate continuous consensus dynamics. Note that if $w_{ij}(t) = w_{ji}(t)$, the function $w(\cdot, \cdot, \cdot, \cdot, \cdot)$ is symmetric in the sense of being unaffected by the interchange of $\alpha$ and $\beta$.}

\aor{Consensus dynamics are a common tool throughout multi-agent control. We mention their use in coverage control \cite{gcb08}, formation control \cite{ofm07, othesis},
distributed estimation \cite{XBL05, XBL06}, distributed task assignment \cite{CBH09},  and distributed optimization \cite{TBA86} and \cite{NO09}. It is often the case that multi-agent controllers are designed either by a direct reduction to a consensus task or by using
consensus dynamics (in either continuous or discrete time) as a subroutine. }

\aor{Due to the widespread use of consensus dynamics, it is of considerable interest to analyze their behavior in large-scale systems,  i.e., in the setting when the number of agents goes to infinity. \jmhr{Some} initial progress on questions of this type was 
made in \cite{giacomo},} \jmhr{and we believe that the analysis of \eqref{mainiteration} could prove instrumental in understanding the asymptotic properties of such systems.}

\bigskip

\aor{\noindent {\em Example 2. The Hegselmann-Krause model.}  The model of opinion dynamics introduces by Hegselmann and Krause \cite{H-K} corresponds to
\[ w(t, \alpha, \beta, x_t(\alpha), x_t(\beta)) = {{\bf 1}}_{|x_t(\beta)-x_t(\alpha)| < r}. \] Here $x_t(\alpha)$ is usually thought of as 
an opinion of agent $\alpha$. Intuitively, each agent ignores all opinions which deviate from its own by more than a certain ``opinion radius'' $r$. However, each agent does move its opinion towards the average of those other agents whose opinions are at most $r$ from its own. Note that Hegselmann and Krause studied in \cite{H-K} a discrete-time version of this model.}

\aor{The original paper by Hegselmann and Krause \cite{H-K} spawned a vast follow-up literature analyzing variations of the model, some of which we
discuss next. It is, in general, not possible to relate here all the observations that have been made about this model. Nonetheless, we'd like to take this opportunity to mention a series of recent 
breakthroughs analyzing the convergence time of the (asymmetric) version of this model in discrete-time \cite{rasoul, chaz, wedin}.}

\bigskip

\aor{\noindent {\em Example 3. Hegselmann-Krause models with unequal radii.} It was proposed in the recent paper \cite{ana} to study the unequal radii version,
\[ w(t, \alpha, \beta, x_t(\alpha), x_t(\beta)) = {{\bf 1}}_{|x_t(\beta)-x_t(\alpha)| < r_{\alpha}}. \] In contrast to the standard Hegselmann-Krause model, here each agent $\alpha$ has a potentially different 
opinion radius $r_{\alpha}$ which determines its openness to opinions of others. In the terminology of \cite{ana}, this is a {\em synchronized
bounded confidence} model. It was also proposed in \cite{ana} to study 
\[ w(t, \alpha, \beta, x_t(\alpha), x_t(\beta)) = {{\bf 1}}_{|x_t(\beta)-x_t(\alpha)| < r_{\beta}}. \] This corresponds to each agent 
``broadcasting'' its opinion to others, with different agents having different ``loudness'' and consequently reaching more agents. In the language of \cite{ana},
this is a {\em synchronized bounded influence model.} We remark that \cite{ana} studied these models in discrete, rather than continuous time (see also the related \cite{ana2}). Moreover, we note that unlike the Hegselmann-Krause models, here $w(\cdot, \cdot, \cdot, \cdot, \cdot)$ may not be symmetric in the sense that the weight agent $\alpha$ places on $\beta$ may be different than the weight agent $\beta$ places on $\alpha$.} The results presented in this paper do thus not always apply to such systems.

%\aor{An appealing generalization is to study 
%\[ w(t, \alpha, \beta, x_t(\alpha), x_t(\beta)) = {{\bf 1}}_{I(\alpha)}, \] where $I(\alpha)$ is an interval containing $x_t(\alpha)$. This contains the synchronized bounded confidence %models as a special case, and can further takes into account the 
%possibility that some agents are ``one-sided'' in the sense of being more receptive to opinions which are to their right but not left,
%or vice versa. }

\bigskip

\aor{\noindent {\em Example 4. Some other opinion dynamics.} The generality of Eq. (\ref{mainiteration}) includes many other models of
opinion dynamics. For example, it is natural to replace the sharp cutoff of the Hegselmann-Krause model with a smoother decline. This
corresponds to agents which take the opinions of all other agents into account, but with decreasing strength as these opinions deviate from
theirs. Such models have appeared in the literature; for example \cite{cft-ifac} considered weight functions which are continuous functions of $x_t(\beta)-x_t(\alpha)$ which drop to zero outside of a certain radius. Another possibility is to model the decline of opinion influence with a Gaussian decay, e.g., as \begin{equation} w(t, \alpha, \beta, x_t(\alpha), x_t(\beta)) = e^{-\frac{(x_t(\alpha) - x_t(\beta))^2}{\sigma_a^2}}. \label{gaussdecay} \end{equation}}
In other models  motivated by robotic applications where sensors cannot accurately sense robots that are too close (see e.g. \cite{mahen2014}), weights are positive only when agents are separated by a distance within a certain range that does not include 0,
e.g.  $w(t, \alpha, \beta, x_t(\alpha), x_t(\beta)) = 1$ if $|(x_t(\alpha) - x_t(\beta)| \in [R_{\min},R_{\max}]$ and 0 else.
% Such models typically (almost) 
%never converge to consensus, but for example only to \quotes{practical %rendezvous} where all agents are within distance $R_{\min}>0$ of each other. }

\aor{Alternatively, Eq. (\ref{mainiteration}) is general enough so that the usual assumption of homogeneity in opinion dynamic models may be dispensed with:
for example, we may instead assume that agents come in $k$ different {\em types} and $w(t, \alpha, \beta, x_t(\alpha), x_t(\beta))$ depends not only
on the distance between $x_t(\alpha)$ and $x_t(\beta)$ but also on the types of $\alpha$ and $\beta$. This would allow us to model a scenario wherein agents put more weight on the opinion of similar agents. We may also consider a model along these lines with a continuous measure of similarity, e.g., 
\begin{equation} w(t, \alpha, \beta, x_t(\beta), x_t(\alpha)) = {\bf 1}_{|\alpha - \beta| < r'} ~ \cdot ~{\bf 1}_{|x_t(\alpha) - x_t(\beta)| < r}. \label{cont-type} \end{equation} This corresponds to agents which ignore the opinions not only of agents whose opinions are distant from theirs, but also agents which are not sufficiently similar. Note that Eq. (\ref{cont-type}) features a $w(\cdot, \cdot, \cdot, \cdot, \cdot)$ which is symmetric in the sense of being unaffected by interchange of $\alpha$ and $\beta$; on the other hand, 
the model from Eq. (\ref{gaussdecay}) described above will not be symmetric unless all the parameters $\sigma_{\alpha}$ are identical. }

\aor{Finally, we note that since $w(t, \alpha, \beta, x_t(\alpha), x_t(\beta))$ and $w(t, \alpha', \beta, x_t(\alpha'), x_t(\beta))$ can be arbitrarily different functions whenever $\alpha \neq \alpha'$, Eq. (\ref{mainiteration}) can also model any mixture of the models we have just described. }

\bigskip

\aor{As the previous examples make clear, Eq. (\ref{mainiteration}) captures the properties of a number of popular opinion dynamic models, and furthermore, the generality of Eq. (\ref{mainiteration}) allows us to describe a wide multitude of models not yet considered in the literature. It is therefore interesting to investigate how solutions of Eq. (\ref{mainiteration}) behave. We next turn to our results on this subject.}

\subsection{Main results} \aor{Unfortunately,} in general Eq. (\ref{mainiteration}) is not guaranteed to converge in any meaningful sense. Indeed, \aor{as Example 2 above makes clear, consensus of finitely many agents}
is a special case of Eq. (\ref{mainiteration}) and consequently all the counterexamples to convergence of finitely many agents from, \aor{for example,} \cite{parallelbook} can be adapted to this setting. 

\aor{We briefly spell out one example. Consider the dynamic system which switches at integer times between the following two systems:
%\[ \dot{z}_1 = 0, \dot{z}_2  =  z_1 - z_2, 
%\dot{z}_3  =  0, \] and 
%\[ \dot{z}_1 =  0, \dot{z}_2  =  z_3 - z_2, \dot{z}_3  =  0 \] 
\[ \dot{z}_1 = 0, \dot{z}_2  =  z_1 - z_2, 
\dot{z}_3  =  0,  \phantom{aaaa}\text{     and     }\phantom{aaaa}
 \dot{z}_1 =  0, \dot{z}_2  =  z_3 - z_2, \dot{z}_3  =  0 .\] 
This may be viewed as an instance of continuous consensus dynamics of Example 1 in Section 1.2 by 
defining the coefficients $w_{ij}(t)$ appropriately. It is easy to see that if $z_1(0)  < z_2(0)  < z_3(0)$ then this dynamic leads to $z_2(t)$ diverging as it alternates between being pulled in the direction of $z_1(0)$ and being ``pulled'' in the direction of $z_3(0)$. }

\aor{As we discussed in Example 1 of Section 1.2, continuous consensus dynamics are a special case of Eq. (\ref{mainiteration}). We therefore conclude that,
in general, it is not true that if $x_t(\alpha)$ satisfies Eq. (\ref{mainiteration}) for some $w(\cdot, \cdot, \cdot, \cdot, \cdot)$ then all or almost all $x_t(\alpha)$ converge as $t \rightarrow \infty$. It is not even true that the distribution of values of $x_t(\alpha)$ converges in any meaningful sense: we see from the above example that a mass of agents can alternate between being close to $z_1(0)$ and close to $z_2(0)$. }

\aor{However}, we will show that under the assumption that $w(\cdot, \cdot, \cdot, \cdot, \cdot)$ is {\em symmetric}, i.e., 
\[ w(t, \alpha, \beta, x_t(\alpha), x_t(\beta))  = w(t, \beta, \alpha, x_t(\beta), x_t(\alpha)) \] the {\em distribution} of the agents is guaranteed to converge. We
state this result formally next. 

First, we review the relevant notion of convergence of distributions. Every function $x_t(\alpha)$ defines a measure $\mu_t$ on the real line in the natural way \begin{equation} \label{mudef} \mu_t(A) = {\mathcal L}( \{ \alpha ~|~ x_t(\alpha) \in A \}) \end{equation} where ${\mathcal L}(\cdot)$ refers to the Lebesgue measure
on the real line. These measures $\mu_t$ are a natural way to summarize the concentrations of the values $x_t(\alpha)$. The most natural definition of convergence of distributions would be that $\lim_t \mu_t(A) = \mu_{\infty}(A)$ for any Borel measurable set $A$; this is referred to as {\em strong convergence} of measures and it is usually too restrictive to be used in practice (under this definition, for example, defining $\mu_{y}(A) = 1_{y \in A}$ we have that it is not true that $\mu_{1/t}$ converges to $\mu_0$ as $t \rightarrow \infty$). Consequently, the most commonly used notion is {\em convergence in distribution} (sometime referred to in this context as weak-$*$ convergence): $\mu_t$ approaches $\mu_{\infty}$ in distribution if 
$\lim_t \mu_t(A) = \mu_{\infty}(A)$ for all sets $A$ whose boundary has measure $0$ under $\mu_{\infty}$. \jmh{Equivalently, $\mu_t$ approaches $\mu_\infty$ in distribution if $\lim_{t\to \infty} \int \eta(x) d\mu_t(x) = \int \eta(x) d\mu_\infty$ for every bounded continuous function $\eta$. This equivalence is part of Portmanteau's theorem, see for example \cite{b86}.
}

We can now state our first main result. 

\smallskip

\begin{theorem} \label{weakconv} Suppose $w(\cdot, \cdot, \cdot, \cdot, \cdot)$  is nonnegative and symmetric and let $x_t(\alpha)$ be a solution of Eq. (\ref{mainiteration}) and  $\mu_t$ be defined as in Eq. (\ref{mudef}). Then there exists a measure $\mu_{\infty}$ on $[0,1]$ such that $\mu_t$ approaches $\mu_{\infty}$ in distribution. \end{theorem}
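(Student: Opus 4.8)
The plan is to produce, for every convex function $\phi$, a Lyapunov functional of the solution, namely
$V_\phi(t) := \int_0^1 \phi(x_t(\alpha))\,d\alpha = \int \phi\, d\mu_t$, show that it is non-increasing in $t$, and then deduce convergence in distribution by a soft functional-analytic argument. This is the continuum analogue of the classical observation that $\sum_i z_i^2$ is non-increasing along symmetric consensus dynamics \eqref{cons}.

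First I would establish the monotonicity of $V_\phi$ for $\phi\in C^2(\R)$ with $\phi''\ge 0$. Since opinions remain in the compact interval $[0,1]$ and $w\le W$, the map $t\mapsto x_t(\alpha)$ is Lipschitz on each bounded time interval, uniformly in $\alpha$; hence $t\mapsto V_\phi(t)$ is locally Lipschitz, thus absolutely continuous, and one may differentiate under the integral for almost every $t$:
\[ \dot V_\phi(t) = \int_0^1 \!\! \int_0^1 \phi'(x_t(\alpha))\, w(t,\alpha,\beta,x_t(\alpha),x_t(\beta))\,(x_t(\beta)-x_t(\alpha))\,d\beta\,d\alpha. \]
The integrand is bounded on $[0,1]^2$, so Tonelli's theorem applies, and interchanging the roles of $\alpha$ and $\beta$, using the symmetry of $w$, and averaging the two expressions gives
\[ \dot V_\phi(t) = \tfrac12 \int_0^1 \!\! \int_0^1 w(t,\alpha,\beta,x_t(\alpha),x_t(\beta))\,\bigl(\phi'(x_t(\alpha))-\phi'(x_t(\beta))\bigr)\bigl(x_t(\beta)-x_t(\alpha)\bigr)\,d\beta\,d\alpha. \]
Since $w\ge 0$ and $\phi'$ is nondecreasing, the integrand is $\le 0$ pointwise, so $\dot V_\phi(t)\le 0$ a.e.\ and $V_\phi$ is non-increasing; as $\phi$ is bounded on $[0,1]$, $V_\phi$ is bounded below, so $L_\phi := \lim_{t\to\infty}\int \phi\,d\mu_t$ exists for every such $\phi$. (A joint-measurability remark needed for Tonelli: $(t,\alpha)\mapsto x_t(\alpha)$ is jointly measurable, being measurable in $\alpha$ and continuous in $t$.)

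Next I would pass from convex test functions to arbitrary ones. Any $\eta\in C^2([0,1])$ decomposes as $\eta=\phi_1-\phi_2$ with $\phi_1(x)=\eta(x)+Cx^2$ and $\phi_2(x)=Cx^2$, both convex once $C\ge\tfrac12\|\eta''\|_\infty$; hence $\ell(\eta):=\lim_{t\to\infty}\int\eta\,d\mu_t = L_{\phi_1}-L_{\phi_2}$ exists for all $\eta\in C^2([0,1])$. Because each $\mu_t$ is a probability measure on $[0,1]$ one has $|\int\eta\,d\mu_t|\le\|\eta\|_\infty$, and $C^2([0,1])$ is dense in $C([0,1])$; a routine three-term estimate then shows that $\int\eta\,d\mu_t$ is Cauchy as $t\to\infty$ for every $\eta\in C([0,1])$, so $\ell$ extends to a bounded linear functional on $C([0,1])$ with $\int\eta\,d\mu_t\to\ell(\eta)$. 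Since $\ell$ is positive and $\ell(1)=1$, the Riesz representation theorem provides a Borel probability measure $\mu_\infty$ on $[0,1]$ with $\ell(\eta)=\int\eta\,d\mu_\infty$. Thus $\int\eta\,d\mu_t\to\int\eta\,d\mu_\infty$ for every bounded continuous $\eta$, which by Portmanteau's theorem is exactly the assertion that $\mu_t$ converges to $\mu_\infty$ in distribution. (Alternatively, invoke Prokhorov: $\{\mu_t\}$ is trivially tight, and any two subsequential weak limits agree against all convex $C^2$ functions, hence against a dense subspace of $C([0,1])$, hence coincide.)

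I expect the main obstacle to be the rigorous execution of the first step rather than the idea behind it: justifying differentiation under the integral sign, the appeal to Tonelli, and the symmetrization, starting only from absolute continuity of $t\mapsto x_t(\alpha)$ and joint measurability of $w$. Boundedness of $w$ together with the invariance of $[0,1]$ — which itself should be proved first, by a maximum-principle argument bounding $\frac{d}{dt}\sup_\alpha x_t(\alpha)$ from above and $\frac{d}{dt}\inf_\alpha x_t(\alpha)$ from below — are precisely what make these manipulations legitimate.
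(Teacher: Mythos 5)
Your proposal is correct, and its first half coincides with the paper's argument: the paper's Lemma 2.3 establishes exactly your monotonicity claim, by the same symmetrization of the double integral using the symmetry of $w$ and the monotonicity of $\phi'$, with the same care about differentiation under the integral sign (joint measurability plus uniform Lipschitz continuity in $t$, which in turn rests on the invariance of $[0,1]$ proved first via a Gronwall inequality). Where you genuinely diverge is in how you pass from ``$\int\phi\,d\mu_t$ converges for every convex $\phi$'' to convergence in distribution. The paper specializes to the monomials $x\mapsto x^k$, obtains convergence of all moments $m_t(k)$, invokes Hausdorff's characterization of moment sequences of $[0,1]$-valued random variables to produce a limit law, and then uses determinacy of the moment problem on $[0,1]$ to upgrade moment convergence to convergence in distribution. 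You instead decompose an arbitrary $\eta\in C^2([0,1])$ as a difference of two convex functions $\eta+Cx^2$ and $Cx^2$, extend the resulting limit functional by density of $C^2$ in $C([0,1])$, and realize it as a measure via Riesz representation. Both routes are valid; yours is arguably more self-contained and more robust (it does not rely on compactness of the support through moment determinacy, only through uniform boundedness of the test-function integrals), while the paper's is shorter to state given the classical references and is presented by the authors as a deliberate novelty (``every convex function is a Lyapunov function'' combined with the Hausdorff moment problem). Two small cautions: your convex summands should be extended to convex differentiable functions on all of $\R$ before applying the monotonicity lemma as stated (harmless, since opinions never leave $[0,1]$); and your sketched maximum-principle proof of the invariance of $[0,1]$ via $\frac{d}{dt}\sup_\alpha x_t(\alpha)$ is the one step that needs more care than you suggest, since the supremum need not be attained or differentiable --- the paper's Gronwall argument is the safe way to do it.
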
  

\smallskip

Theorem \ref{weakconv} states that symmetry is sufficient for convergence (though recall that we made several assumptions in order for Eq. (\ref{mainiteration}) to make sense which we are not mentioning). \jmh{We will see that it can directly be extended to multi-dimensional opinions}.
It is \jmh{significantly} stronger than the results previously available in the 
literature. For example, \cite{cft12} proves a version of Theorem \ref{weakconv} in a similar \aor{(discrete-time)} model under the additional assumptions that $w(\cdot, \cdot, \cdot, \cdot, \cdot)$ depends only on $x_t(\beta)-x_t(\alpha)$ (so it is a function of just one argument). Similarly, \cite{bht09} proves a version of Theorem \ref{weakconv} under the assumption that the solution $x_t(\alpha)$ is monotonic.

We next turn to the question of whether it is possible to improve upon the conclusion of convergence in distribution. In fact, convergence in distribution does not seem to be the most natural notion of convergence for this class of systems; the more plausible notion would be {\em convergence of almost all agents}, i.e., that $x_t(\alpha)$ converges for almost all $\alpha$. We show, however, that convergence in this sense may not occur. 

\smallskip

\begin{theorem} \label{counterexample} There exists a symmetric, nonnegative $w(\cdot, \cdot, \cdot, \cdot, \cdot)$ and \jmhr{bounded} functions $x_t$ satisfying Eq. (\ref{mainiteration}) with this 
$w(\cdot, \cdot, \cdot, \cdot, \cdot)$ such that the set of $\alpha$ where $x_t(\alpha)$ does not converge has positive measure. 
\end{theorem}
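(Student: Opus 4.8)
The plan is to exhibit a single explicit pair $(w,x_t)$; the main conceptual constraint is supplied by Theorem \ref{weakconv} itself. Since that theorem guarantees that $\mu_t$ converges in distribution for \emph{every} symmetric solution, a positive-measure set of agents can fail to converge only if those agents keep moving in a way that leaves the overall distribution of opinions essentially unchanged. This forces the construction to be, in effect, a nontrivial measure-preserving rearrangement of the opinion profile, repeated forever. Concretely I would take the agent set $[0,1]$ with $x_0(\alpha)=\alpha$, so that $\mu_0$ is the uniform measure on $[0,1]$, and, exploiting the fact that $w$ is allowed to depend on $t$, design the dynamics so that on a sequence of time windows $[T_{k-1},T_k]$ the profile $\alpha\mapsto x_t(\alpha)$ is alternately reflected about $1/2$ and reflected back, in such a way that the empirical distribution of opinions equals the uniform measure at \emph{every} instant. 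If this is achieved then $\mu_t\equiv\mu_0$ (a fortiori convergent), while every agent with $x_0(\alpha)\neq 1/2$ has $x_t(\alpha)$ oscillating forever between $x_0(\alpha)$ and $1-x_0(\alpha)$ and hence does not converge; the non-converging set is $[0,1]\setminus\{1/2\}$, of measure $1$.

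Second, the Lyapunov functional constrains how fast the reflections can be performed, and this constraint must be respected. Since $w$ is symmetric, $V(t)=\int_0^1 x_t(\alpha)^2\,d\alpha$ obeys $\dot V(t)=-\int_0^1\int_0^1 w\,(x_t(\alpha)-x_t(\beta))^2\,d\beta\,d\alpha\le 0$, so $V$ is nonincreasing and bounded and $\int_0^\infty(-\dot V)\,dt<\infty$; combined with the Cauchy--Schwarz estimate $\dot x_t(\alpha)^2\le W\int_0^1 w\,(x_t(\alpha)-x_t(\beta))^2\,d\beta$ this yields $\int_0^\infty\int_0^1\dot x_t(\alpha)^2\,d\alpha\,dt<\infty$. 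A reflection carried out over a window of length $\tau_k$ contributes order $1/\tau_k$ to this integral, so we need $\sum_k 1/\tau_k<\infty$; I would take $\tau_k=k^2$, which moreover has $\sum_k\tau_k=\infty$ so that $T_k\to\infty$ and infinitely many reflections are completed over the infinite horizon. Thus successive reflections must be executed more and more slowly, exactly as the energy decay permits.

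Third — and this is the crux — one must realize an order-reversing, density-preserving deformation of the opinion profile as a genuine solution of Eq. (\ref{mainiteration}) with a symmetric nonnegative weight. Naively prescribing "every opinion moves toward its mirror image'' fails: under symmetric attraction the lower of two interacting clusters rises and the upper one falls until they meet, so the profile collapses to consensus at $1/2$ instead of crossing over. The remedy is a relay schedule inside each window: discretize the opinions into a fine mesh and use a weight $w$ that is piecewise constant in $t$, switching on, at each sub-step, only an ascending cluster together with a descending cluster still lying well above it, letting each advance by one mesh step before fresh partners are switched on, so that the ascending stream and the descending stream filter through one another without ever meeting. Symmetry is automatic, since an ascending agent is always paired with a strictly higher, descending one exactly as the equation prescribes; the sub-step velocities and the on/off schedule are chosen so that (i) the full reflection is completed in time exactly $\tau_k$, (ii) the empirical opinion distribution is uniform on $[0,1]$ at every instant, not merely at the window endpoints, and (iii) $w$ stays bounded by a fixed $W$ independent of $k$ (the velocities scale like $1/\tau_k$, so this is compatible). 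Orchestrating the two interleaved streams so that they pass through each other while the density stays flat and the timing comes out right is the delicate part of the proof, and where essentially all the work lies.

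Finally I would verify the standing hypotheses: $w$ is jointly measurable, bounded, nonnegative, and symmetric; each $x_t(\alpha)$ is piecewise $C^1$ with breakpoints forming a discrete set, hence absolutely continuous; and since Eq. (\ref{mainiteration}) keeps every opinion in the convex hull of the current opinions, all opinions stay in $[0,1]$, so the solution is of the admissible type. Combined with the observation that the set of $\alpha$ for which $x_t(\alpha)$ does not converge is $[0,1]\setminus\{1/2\}$, which has positive (indeed full) measure, this proves the theorem.
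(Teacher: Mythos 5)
Your framing gets several things right: you correctly identify that any counterexample must keep the distribution convergent while agents keep moving, you derive the correct dissipation bound $\int_0^\infty\int_0^1 \dot{x}_t(\alpha)^2\,d\alpha\,dt<\infty$, and you correctly observe that the naive ``move toward your mirror image'' scheme collapses to consensus. But the construction you propose at the crux is not merely delicate --- it is impossible, and the obstruction is precisely the Lyapunov function you invoke. By symmetry of $w$, the second moment $m_2(t)=\int_0^1 x_t(\alpha)^2\,d\alpha$ satisfies $\dot m_2(t)=-\int\int w\,(x_t(\alpha)-x_t(\beta))^2\,d\alpha\,d\beta$, and your own Cauchy--Schwarz estimate integrates to $\int_0^1\dot x_t(\alpha)^2\,d\alpha\le -W\,\dot m_2(t)$. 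A second Cauchy--Schwarz in $t$ over a window $[T_k,T_{k+1}]$ of length $\tau_k$ then gives $\int_0^1\left(x_{T_{k+1}}(\alpha)-x_{T_k}(\alpha)\right)^2 d\alpha\le \tau_k\, W\left(m_2(T_k)-m_2(T_{k+1})\right)$. A reflection has mean-square \emph{net} displacement $\int_0^1(1-2\alpha)^2\,d\alpha=1/3$, so each window forces $m_2$ to drop by at least $1/(3W\tau_k)>0$; but your requirement that the empirical distribution be uniform at every instant (or even only at the window endpoints) pins $m_2(T_k)=1/3$ for all $k$ and forces that drop to be zero. Indeed, if $\mu_t$ is uniform for all $t$ then $m_2$ is constant, hence $\dot m_2=0$ a.e., hence $\dot x_t(\alpha)=0$ for a.e.\ $\alpha$ and $t$: nothing moves. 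No relay schedule, however cleverly interleaved, can circumvent this; any order-reversing rearrangement that returns the profile to the same distribution is ruled out. (A secondary symptom of the same problem: the essential supremum of $x_t(\cdot)$ is non-increasing, so the agent that is supposed to ascend to the current maximum can never get there.)

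The paper's construction evades exactly this obstruction, in two ways your proposal lacks. First, the oscillating agents cycle back and forth across a fixed interval, so over a full period their net displacement is zero and no global order reversal is attempted. Second --- the essential extra ingredient --- two external unit-mass clusters sit outside the oscillation interval and drift inward forever by a total amount that is finite (speed proportional to $v(t)^{3/2}$ with $\int v^{3/2}<\infty$) while the cycling speed $v(t)$ has $\int v=\infty$. These clusters play a double role: they supply the pull for agents near the edges of the moving mass, where there are not enough oncoming partners ahead of them, and their perpetual inward drift is the strictly decreasing contribution to $m_2$ that pays for the dissipation $\int\int w\,(x(\alpha)-x(\beta))^2>0$ which sustained motion requires. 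To repair your argument you would have to abandon exact measure preservation and the reflection, and build in such a slowly draining reservoir together with a boundary mechanism --- at which point you would essentially be reconstructing the paper's example.
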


\smallskip

We believe that the contrast between Theorem \ref{weakconv} and Theorem \ref{counterexample} is somewhat surprising. Indeed, for opinion dynamics of finitely many agents, it is not hard to see that convergence in distribution and of all agents are equivalent in continuous time but not in discrete time\footnote{More formally, if $L$ is a Laplacian matrix then $x'(t)=-Lx(t)$ converges in distribution if and only if each $x_i(t)$ converges; on the other hand, there exist symmetric stochastic matrices $A$ such that $x(t+1)=A x(t)$ has e the property that some $x_i(t)$ diverges while the distribution of the vector $x(t)$ remains fixed.}. Our initial conjecture when beginning this work was that for a continuum of agents with symmetric kernel, convergence will happen either in both senses or not at all; however, these two theorems show this to be false.

Nevertheless, in spite of Theorem \ref{counterexample}, it may be possible that a stronger form of convergence holds in many natural settings. The extent to which this is true remains an open question. But, we are able to show that as long as agents with similar opinions exert
a non-negligible pull on each other, almost all of the $x_t(\alpha)$ will indeed converge. Formally, let us define 
the set of functions $\Gamma(r,\Delta)$ for $r>0, \Delta>0$ to be the set of $w(t, \alpha, \beta, x_t(\alpha), x_t(\beta))$ which satisfy the following property: if \[ |x_t(\alpha)-x_t(\beta)| \leq r \]  then \[ w(t, \alpha, \beta, x_t(\alpha), x_t(\beta)) \geq \Delta. \] We then have the following theorem. 

\smallskip

\begin{theorem}\label{densityconvergence} If $w(\cdot, \cdot, \cdot, \cdot, \cdot)$ is symmetric, nonnegative, and belongs to some $\Gamma(r, \Delta)$ for $r>0, \Delta>0$, then $x_t(\alpha)$ converges for almost all $\alpha$. 

\jmh{Moreover, there exist a finite number of points $z_1,z_2,\dots,z_k\in [0,1]$ with $\abs{z_i-z_j}\geq r$ for all $i\neq j$ such that $\lim_{t\to\infty} x_t(\alpha) \in \{z_1,\dots,z_k\}$ for almost all $\alpha$.}
\end{theorem}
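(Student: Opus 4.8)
The plan is to couple the distributional convergence of Theorem~\ref{weakconv} with the classical dissipation estimate for symmetric interactions. Writing $\Phi(t)=\int_0^1 x_t(\alpha)^2\,d\alpha$ and $D(t)=\int_0^1\!\int_0^1 w\bigl(t,\alpha,\beta,x_t(\alpha),x_t(\beta)\bigr)\bigl(x_t(\alpha)-x_t(\beta)\bigr)^2\,d\alpha\,d\beta$, symmetry of $w$ together with the usual symmetrization of the integrand gives $\dot\Phi(t)=-D(t)$ for almost every $t$; since all opinions stay in $[0,1]$ this yields the global bound $\int_0^\infty D(t)\,dt\le\Phi(0)<\infty$. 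Everywhere below I will use the $\Gamma(r,\Delta)$ hypothesis in the form $D(t)\ge\Delta\iint_{|x_t(\alpha)-x_t(\beta)|<r}\bigl(x_t(\alpha)-x_t(\beta)\bigr)^2\,d\alpha\,d\beta$, and that $|\dot x_t(\alpha)|\le W$.

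First I would identify $\mu_\infty$. Since $\int_0^\infty D<\infty$ there is a sequence $s_n\to\infty$ with $D(s_n)\to0$. If $\mu_\infty$ had two support points $z<z'$ with $z'-z<r$, pick $\rho$ small with $z'-z+2\rho<r$; then $\mu_\infty\bigl((z-\rho,z+\rho)\bigr)$ and $\mu_\infty\bigl((z'-\rho,z'+\rho)\bigr)$ are positive, and (Portmanteau, open sets) these masses persist under $\mu_{s_n}$ for large $n$. Pairing agents whose opinions fall in the first interval against those whose opinions fall in the second — all such pairs within distance $r$ — the $\Gamma$-bound forces $D(s_n)$ to stay bounded away from $0$, a contradiction. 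Hence the support of $\mu_\infty$ is finite and $r$-separated: $\mu_\infty=\sum_{i=1}^k m_i\delta_{z_i}$ with $m_i>0$ and $|z_i-z_j|\ge r$. Consequently, for every small $\epsilon>0$ the intervals $B_i^\epsilon=(z_i-\epsilon,z_i+\epsilon)$ are disjoint with $\mu_\infty$-null boundaries, so $\mu_t(B_i^\epsilon)\to m_i$ and $\mu_t\bigl([0,1]\setminus\bigcup_i B_i^\epsilon\bigr)\to0$.

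Next I would prove almost-everywhere convergence by contradiction. Let $A$ be the measurable set of $\alpha$ with $\ell^-(\alpha):=\liminf_t x_t(\alpha)<\limsup_t x_t(\alpha)=:\ell^+(\alpha)$. Because $|\dot x_t(\alpha)|\le W$, whenever $\ell^-(\alpha)<p<q<\ell^+(\alpha)$ the trajectory crosses $[p,q]$ infinitely often, each crossing lasting at least $(q-p)/W$, so $\int_0^\infty\1_{x_t(\alpha)\in[p,q]}\,dt=\infty$. Call a rational interval $[p,q]$ \emph{good} if $z_i\notin[p,q]\subseteq(z_i-r,z_i+r)$ for some $i$, and set $A_{p,q}=\{\alpha:\ell^-(\alpha)<p,\ \ell^+(\alpha)>q\}$. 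If a good $A_{p,q}$ had positive measure, then by Tonelli $\int_0^\infty\mu_t([p,q])\,dt\ge\int_{A_{p,q}}\int_0^\infty\1_{x_t(\alpha)\in[p,q]}\,dt\,d\alpha=\infty$; but pairing the mass with opinion in $[p,q]$ against the mass $\ge m_i/2$ with opinion in a small enough $B_i^\epsilon$ (all such pairs within $r$ and at distance bounded below) the $\Gamma$-bound gives $D(t)\ge c\,\mu_t([p,q])$ for all large $t$, whence $\int_0^\infty\mu_t([p,q])\,dt<\infty$ — a contradiction. So every good $A_{p,q}$, and their countable union, is null. For $\alpha\in A$ outside this union, $(\ell^-(\alpha),\ell^+(\alpha))$ must avoid $\bigcup_i\bigl((z_i-r,z_i+r)\setminus\{z_i\}\bigr)$ and hence lie inside a fixed closed ``far region'' $F$ at distance $\ge r$ from every $z_i$; then $x_t(\alpha)$ eventually lies in a fixed slight enlargement $\bar F$ of $F$ with $\mu_\infty(\bar F)=0$. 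If a positive-measure set of such $\alpha$ existed, a positive-measure subset would, from some time $n$ on, stay in $\bar F$, forcing $\mu_t(\bar F)$ to be bounded below for large $t$ and contradicting $\mu_t(\bar F)\to0$ (Portmanteau, closed sets). Hence $A$ is null. Finally, putting $x_\infty(\alpha)=\lim_t x_t(\alpha)$ for almost every $\alpha$, bounded convergence gives $\mu_t\to(x_\infty)_*\mathcal L$ in distribution, so $(x_\infty)_*\mathcal L=\mu_\infty=\sum_i m_i\delta_{z_i}$; therefore $x_\infty(\alpha)\in\{z_1,\dots,z_k\}$ for almost all $\alpha$.

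The main obstacle — and the reason the conclusion is only for \emph{almost} every agent — is that a null set of indices contributes nothing to $\Phi$ or $D$, so a measure-zero set of agents can genuinely oscillate forever; every step must therefore be a positive-measure argument. The case split is unavoidable: the dissipation bound is blind to thin ``rivers'' of mass drifting through regions far from all clusters, while distributional convergence alone does not yield the integrability one needs near a cluster — so one must use the dissipation estimate in the $r$-neighborhoods of the $z_i$ and the distributional convergence outside them. Making the geometry of these two regimes exhaust all possibilities for the location of $(\ell^-(\alpha),\ell^+(\alpha))$, and choosing the various $\epsilon$'s consistently with the requirement that all the relevant pairs stay within distance $r$, is where the care is needed.
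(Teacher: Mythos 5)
Your proposal is correct, and its backbone coincides with the paper's: finiteness of the dissipation integral $\int_0^\infty D(t)\,dt$ obtained from $\dot\Phi=-D$ (Lemma \ref{finiteintegrability}), a Tonelli/occupation-time argument showing that a closed interval lying within distance $r$ of some $z_i$ but not containing it can only be occupied for finite total time — hence only a null set of agents can cross it infinitely often — and a ``far region'' argument showing that agents whose trajectories stay bounded away from every $z_i$ form a null set because $\mu_t$ of that closed region tends to zero. Where you genuinely diverge is in how the points $z_1,\dots,z_k$ are produced. The paper partitions $[0,1]$ into continuity intervals of length less than $r$, proves a concentration lemma (Lemma \ref{pointpicking}, via the variance identity of Lemma \ref{varchange}) showing that the mass in each such interval collapses onto a moving centroid, and then argues that these centroids converge, with the $r$-separation established separately. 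You instead define the $z_i$ as the support of $\mu_\infty$ and obtain the $r$-separation directly by evaluating $D$ along a sequence of times where it tends to zero; this bypasses the concentration machinery entirely, at the negligible cost of noting that a finitely supported measure on $[0,1]$ is a sum of atoms and that $\rho$ must also be taken smaller than $(z'-z)/2$ so that the two test intervals are disjoint and the pairwise distances are bounded below. Your derivation of the second claim via the pushforward identity $(x_\infty)_*\mathcal{L}=\mu_\infty$ is likewise cleaner than the paper's, which extracts it from the case analysis on limit points. The only items you elide that the paper spends space on are the justification of differentiating under the integral and applying Fubini in $\dot\Phi=-D$, and the measurability of $A$ and of $\liminf_t x_t(\alpha)$, $\limsup_t x_t(\alpha)$; both are routine and your assertions there are correct.
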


\smallskip

We can also guarantee convergence for almost all agents when the interactions only depend on time and on the opinions via a sufficiently smooth and bounded function.

\providecommand{\tw}{\tilde w}

\begin{theorem}\label{thm:Lips->conv}
Suppose that the interaction weights $w(.,.,.,.,.)$ are symmetric, nonnegative, and depend only on time and the positions:
$$w(t,\ao{\alpha, \beta}, x_t(\alpha),x_t(\beta))  = \tw(t,x_t(\alpha),x_t(\beta)).$$
If $\tw$ is 
Lipschitz continuous, i.e. $|\tw(t,x,y) - \tw(t,x,z)| \leq L \abs{y-z}$ for all $t,x,y,z$, and some $L$,
%\end{itemize}
then any soluton $x_t(\alpha)$ of (\ref{mainiteration}) converges for almost all $\alpha$.
\end{theorem}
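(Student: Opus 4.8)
The plan is to strengthen the convergence in distribution supplied by Theorem~\ref{weakconv} into convergence of almost every agent. The feature special to this setting that we can exploit is that, when the kernel depends only on $t$ and on the two opinions, the \emph{relative order} of the agents is frozen for all time; granting that, almost-everywhere convergence of the opinions follows from the classical fact that convergence in distribution of measures on $\R$ entails pointwise almost-everywhere convergence of the associated quantile functions.

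\textbf{Step 1 (reduction to a scalar Carath\'eodory ODE, and order preservation).} Fix a solution $x_t(\alpha)$ and set
\[
V(t,\xi)=\int_0^1 \tw\bigl(t,\xi,x_t(\gamma)\bigr)\,\bigl(x_t(\gamma)-\xi\bigr)\,d\gamma .
\]
Then for every $\alpha$ the map $t\mapsto x_t(\alpha)$ is an absolutely continuous solution of the \emph{scalar} equation $\dot\xi=V(t,\xi)$ with $\xi(0)=x_0(\alpha)$. Now $V$ is measurable in $t$, bounded by $W$, and Lipschitz in $\xi$ uniformly in $t$: opinions stay in $[0,1]$, so $|x_t(\gamma)-\xi|\le1$, and the symmetry of $\tw$ gives $|\tw(t,x,y)-\tw(t,z,y)|=|\tw(t,y,x)-\tw(t,y,z)|\le L|x-z|$, so that $|\tw(t,\xi,y)(y-\xi)-\tw(t,\xi',y)(y-\xi')|\le(L+W)|\xi-\xi'|$ and hence $|V(t,\xi)-V(t,\xi')|\le(L+W)|\xi-\xi'|$. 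Carath\'eodory uniqueness then forces two agents with the same initial opinion to have identical trajectories, and the comparison principle for such scalar equations gives $x_0(\alpha)\le x_0(\beta)\Rightarrow x_t(\alpha)\le x_t(\beta)$ for all $t\ge0$.

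\textbf{Step 2 (combining order preservation with Theorem~\ref{weakconv}).} Let $Q_t$ be the quantile function of $\mu_t$ (left-continuous inverse of its distribution function), and for $\alpha\in[0,1]$ put $R(\alpha)=\mathcal L(\{\beta:x_0(\beta)<x_0(\alpha)\})$ and $R^{+}(\alpha)=\mathcal L(\{\beta:x_0(\beta)\le x_0(\alpha)\})$. Order preservation yields, for every $t$, the inclusions $\{\beta:x_0(\beta)<x_0(\alpha)\}\subseteq\{\beta:x_t(\beta)\le x_t(\alpha)\}$ and $\{\beta:x_0(\beta)>x_0(\alpha)\}\subseteq\{\beta:x_t(\beta)\ge x_t(\alpha)\}$, which unwind into the sandwich
\[
Q_t\bigl(R(\alpha)\bigr)\ \le\ x_t(\alpha)\ \le\ Q_t\bigl(R^{+}(\alpha)+\varepsilon\bigr)\qquad(\varepsilon>0,\ t\ge0),
\]
valid for $\alpha$ away from the two ends (agents with $R(\alpha)=0$ or $R^{+}(\alpha)=1$ realize the running minimum resp.\ maximum of the opinions, which are monotone in $t$, so they converge directly). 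By Theorem~\ref{weakconv}, $\mu_t\to\mu_\infty$ in distribution, so the distribution functions converge at every continuity point of the limit, hence $Q_t(u)\to Q_\infty(u)$ at every continuity point $u$ of $Q_\infty$, i.e.\ for all but countably many $u$. If $\mu_0(\{x_0(\alpha)\})=0$ then $R^{+}(\alpha)=R(\alpha)$, and letting $t\to\infty$ and then $\varepsilon\downarrow0$ along continuity points of $Q_\infty$ in the sandwich forces $x_t(\alpha)\to Q_\infty(R(\alpha))$ whenever $R(\alpha)$ is itself a continuity point of $Q_\infty$. If $\mu_0(\{x_0(\alpha)\})>0$ then all agents with that initial opinion are locked together, $Q_t$ equals their common value on the whole nonempty interval $\bigl(R(\alpha),R^{+}(\alpha)\bigr)$, and choosing a continuity point $u$ of $Q_\infty$ inside that interval gives $x_t(\alpha)=Q_t(u)\to Q_\infty(u)$. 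Finally, the leftover set---nonatomic $\alpha$ with $R(\alpha)$ one of the countably many jump points of $Q_\infty$---is Lebesgue-null, since $R(\alpha)=\mu_0((-\infty,x_0(\alpha)))$ and every level set of $s\mapsto\mu_0((-\infty,s))$ carries no $\mu_0$-mass apart from a possible atom at an endpoint. Hence $x_t(\alpha)$ converges for almost every $\alpha$.

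\textbf{Main obstacle.} The conceptual core is Step 1: one must check that the \emph{given} solution---assumed only absolutely continuous---genuinely solves the scalar Carath\'eodory equation and that Carath\'eodory uniqueness and the comparison principle apply to a right-hand side that is only measurable in $t$; this is exactly where the Lipschitz hypothesis on $\tw$ enters, in combination with the symmetry of $\tw$ and the boundedness of the opinions. Everything in Step 2 is routine measure theory, the only mildly delicate bookkeeping being the treatment of atoms of $\mu_0$---values attained by the initial profile on a positive-measure set---which is harmless precisely because such agents are tied to one another. Note that the argument also explains why it does not contradict Theorem~\ref{counterexample}: there the kernel must depend on the labels $\alpha,\beta$, or fail to be Lipschitz in the opinions, so order preservation, and hence the whole reduction, is unavailable.
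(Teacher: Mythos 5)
Your argument is correct and follows the paper's overall architecture---first establish order preservation, then upgrade the convergence in distribution from Theorem~\ref{weakconv} to almost-everywhere convergence---but the second half is carried out by a genuinely different route. Your Step~1 is, underneath the Carath\'eodory packaging, the paper's Lemma~\ref{lem:lips->order}: the key estimate $|\dot x_t(\beta)-\dot x_t(\alpha)|\le (L+W)\,|x_t(\beta)-x_t(\alpha)|$, obtained exactly as you do by transferring the Lipschitz hypothesis to the first positional argument via symmetry and using $x_t(\gamma)\in[0,1]$, is the paper's computation; viewing every trajectory as a solution of the frozen scalar equation $\dot\xi=V(t,\xi)$ and invoking forward/backward uniqueness and non-crossing is a clean equivalent of the paper's Gronwall argument. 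Where you genuinely diverge is the implication ``order preservation $+$ convergence in distribution $\Rightarrow$ a.e.\ convergence'' (the paper's Lemma~\ref{lem:order+weak->a.e.}, proved in Appendix~\ref{sec:proof_lemma_order}): the paper argues by contradiction, showing via Lemmas~\ref{lem:lower_bound_set} and~\ref{lem:L(N_alpha)=0} that the oscillation intervals $I_\alpha$ of a positive-measure set of non-convergent agents would have to be essentially pairwise disjoint with lengths bounded below, which cannot be packed into $[0,1]$; you instead use the quantile sandwich $Q_t(R(\alpha))\le x_t(\alpha)\le Q_t(R^+(\alpha)+\varepsilon)$ together with convergence of quantile functions at continuity points of $Q_\infty$. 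Your route is more constructive---it identifies the limit explicitly as $Q_\infty(R(\alpha))$ and isolates the exceptional set (atoms of $\mu_0$ and preimages of the countably many jumps of $Q_\infty$) as null---at the cost of some bookkeeping with atoms and endpoints, all of which you handle correctly (the monotonicity of the extremal agents, the locking together of agents sharing an initial opinion, and the fact that level sets of $s\mapsto\mu_0((-\infty,s))$ carry no non-atomic mass). The paper's packing argument avoids quantiles entirely but yields only existence of the limit. Both proofs are complete.
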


We note that as a particular case of both results above, convergence of $x_t(\alpha)$ for almost all $\alpha$ is guaranteed for the one-dimensional versions of the systems considered in \cite{cft12}, where the authors proved convergence in distribution.

\subsection*{Outline}

The rest of this paper is dedicated to providing proofs of these \ao{four} theorems. Theorem \ref{weakconv} on convergence in distribution is proved in Section \ref{weaksection}. Theorem \ref{counterexample} on the possible absence of convergence of the opinions themselves is proved in Section \ref{counterproof}, while the positive convergence results, Theorems \ref{densityconvergence} and \ref{thm:Lips->conv} are proved in Section \ref{strongsection}. We briefly summarize and describe some open questions in
Section \ref{conclusion}.\\
\aor{Finally, although a detailed analysis of questions of existence and uniqueness is outside the scope of the present paper, we show in Appendix A that existence and uniqueness does hold in a substantial class of models,} \jmhr{and therefore that our convergence results do apply to actual solutions in those cases}.

\section{Convergence in distribution\label{weaksection}}  In this section we will prove Theorem \ref{weakconv}. Our proof is quite short and rests on a novel combination of two techniques: a simple exchange of integrals to establish that {\em every} convex function is a Lyapunov function (see proof of Lemma \ref{convdec}) coupled with an appeal to some results of Haussdorff about the moment problem (see proof of Theorem \ref{weakconv} below). But first, we prove a technical lemma stating that, consistently with the intuition, all opinions remain in $[0,1]$ and their evolution is uniformly Lipschitz continuous.

\smallskip

\begin{lemma}\label{lem:x_[01]_Lips}
Let $x$ be a solution of \eqref{mainiteration}. Then $x_t(\alpha) \in [0,1]$ for all $\alpha\in[0,1],t\geq 0$, and $\abs{x_t(\alpha)-x_s(\alpha)}\leq W\abs{s-t}$ for all $s,t\geq 0$.
\end{lemma}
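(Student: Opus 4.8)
The plan is to first establish the Lipschitz bound, since it is essentially immediate from the boundedness of $w$, and then use it (together with a maximum-principle argument) to confine the opinions to $[0,1]$. For the Lipschitz estimate, I would bound the right-hand side of \eqref{mainiteration} in absolute value: for almost every $t$,
\[
|\dot x_t(\alpha)| \;\le\; \int_0^1 w\bigl(t,\alpha,\beta,x_t(\alpha),x_t(\beta)\bigr)\,|x_t(\beta)-x_t(\alpha)|\,d\beta \;\le\; W \sup_{\beta,\gamma\in[0,1]} |x_t(\beta)-x_t(\gamma)|.
\]
Granting for the moment that all opinions stay in $[0,1]$, this supremum is at most $1$, so $|\dot x_t(\alpha)|\le W$ for almost every $t$; integrating the absolutely continuous function $x_t(\alpha)$ over $[s,t]$ then gives $|x_t(\alpha)-x_s(\alpha)|\le W|t-s|$. (If one does not yet know the opinions stay in $[0,1]$, the same argument with the a priori bound $M_T:=\sup_{\alpha,\,t\le T}|x_t(\alpha)|<\infty$, which we have assumed finite, gives a local Lipschitz bound $2WM_T$ on $[0,T]$, enough to make the following argument rigorous before we upgrade the constant.)

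For the invariance of $[0,1]$, I would run a standard "extremal agent" / comparison argument on the quantity $M(t):=\sup_{\alpha\in[0,1]} x_t(\alpha)$. Since each $x_t(\alpha)$ is (locally) Lipschitz in $t$ uniformly in $\alpha$, $M(t)$ is locally Lipschitz, hence absolutely continuous and differentiable almost everywhere. The intuition is that at a time $t$ where some agent $\alpha^\ast$ attains (or nearly attains) the supremum, every term $x_t(\beta)-x_t(\alpha^\ast)$ in the integral \eqref{mainiteration} is $\le 0$, so $\dot x_t(\alpha^\ast)\le 0$, and hence $M(t)$ cannot increase; since $M(0)\le 1$ we get $M(t)\le 1$ for all $t$. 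Symmetrically, $m(t):=\inf_\alpha x_t(\alpha)$ is nondecreasing and bounded below by $0$. Once $x_t(\alpha)\in[0,1]$ is established, the Lipschitz constant improves to $W$ as in the first paragraph.

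The main technical obstacle is making the "extremal agent" step rigorous when the supremum over the continuum $[0,1]$ need not be attained at any single measurable agent, and when one only controls $\dot x_t(\alpha)$ for almost every $t$ (with the null set possibly depending on $\alpha$). I would handle this by working with $M(t)$ directly rather than with a fixed extremal agent: fix $t_0$, pick for each $\varepsilon>0$ an agent $\alpha_\varepsilon$ with $x_{t_0}(\alpha_\varepsilon) > M(t_0)-\varepsilon$, and use the uniform Lipschitz-in-time bound to get, for $t$ slightly larger than $t_0$, that $x_t(\beta)-x_t(\alpha_\varepsilon)\le (M(t_0)-x_{t_0}(\alpha_\varepsilon)) + \text{(Lipschitz slack)} \le \varepsilon + O(t-t_0)$ for all $\beta$; feeding this into the integral form $x_t(\alpha_\varepsilon)=x_{t_0}(\alpha_\varepsilon)+\int_{t_0}^t \dot x_s(\alpha_\varepsilon)\,ds$ bounds $M(t)-M(t_0)$ from above by a quantity that, after letting $\varepsilon\to0$, yields $D^+M(t_0)\le 0$. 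Since $M$ is absolutely continuous with nonpositive upper right Dini derivative a.e., it is nonincreasing, giving $M(t)\le M(0)\le 1$. The argument for $m(t)$ is identical with inequalities reversed. Alternatively, one can avoid Dini derivatives entirely by the "sub-level set" formulation: show $g(t):=\int_0^1 (x_t(\alpha)-1)_+\,d\alpha$ (or $\max_\alpha(x_t(\alpha)-1)_+$) satisfies $g(0)=0$ and $\dot g(t)\le 0$ via Fubini and symmetry-free cancellation of the weight terms, forcing $g\equiv0$; I expect the Dini-derivative version to be the cleanest to write.
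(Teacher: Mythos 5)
Your overall strategy --- first a crude local Lipschitz bound from the assumption $\sup_{\alpha,\,t\le T}|x_t(\alpha)|<\infty$, then a maximum principle on $M(t):=\sup_\alpha x_t(\alpha)$ and $m(t):=\inf_\alpha x_t(\alpha)$ --- is viable and genuinely different from the paper's proof, which instead sets $-m:=\inf_{\beta,\,t\le t^*}x_t(\beta)$ over the \emph{whole} time interval, derives the differential inequality $\dot x_t(\alpha)\ge W(-m-x_t(\alpha))$, and applies an integral form of Gronwall's inequality to contradict the definition of that infimum. However, the central step of your plan has a concrete gap as written: you pick an agent $\alpha_\varepsilon$ within $\varepsilon$ of the supremum at time $t_0$ and bound $x_t(\alpha_\varepsilon)$ for $t>t_0$, but this does not bound $M(t)=\sup_\alpha x_t(\alpha)$, because the supremum at time $t$ may be (nearly) attained by a different agent that sat well below $M(t_0)$ at time $t_0$ and has since risen. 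Tracking a single near-extremal agent therefore cannot deliver $D^+M(t_0)\le 0$. The repair is to estimate all agents simultaneously: for every $\alpha$ and almost every $s$ one has $\dot x_s(\alpha)\le W\,(M(s)-x_s(\alpha))$, since each term $w\cdot(x_s(\beta)-x_s(\alpha))$ of the integrand is at most $W(M(s)-x_s(\alpha))$ whether it is positive or negative (the right-hand side being nonnegative). Integrating this for fixed $\alpha$, and using the local Lipschitz constant $L$ of $M$ and of $x_\cdot(\alpha)$, gives for $W(t-t_0)\le 1$ the uniform bound $x_t(\alpha)\le x_{t_0}(\alpha)+W(t-t_0)\bigl(M(t_0)-x_{t_0}(\alpha)\bigr)+WL(t-t_0)^2\le M(t_0)+WL(t-t_0)^2$, whence $M(t)\le M(t_0)+WL(t-t_0)^2$ and $D^+M(t_0)\le 0$. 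With this correction the rest of your plan goes through, and the final upgrade of the Lipschitz constant to $W$ is as you describe.

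One further caution: your proposed alternative via $g(t)=\int_0^1(x_t(\alpha)-1)_+\,d\alpha$ is not ``symmetry-free'' as claimed. After differentiating under the integral, the contribution of pairs with both $x_t(\alpha)>1$ and $x_t(\beta)>1$ is $\int\!\!\int w(t,\alpha,\beta,\cdot,\cdot)(x_t(\beta)-x_t(\alpha))\,d\beta\,d\alpha$, which cancels only when $w$ is symmetric --- and this lemma (like the paper's proof of it) makes no symmetry assumption; indeed it is later invoked in Appendix A for general $w$. So you should rely on the (repaired) Dini-derivative argument rather than that functional.
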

\begin{proof}
Suppose, to obtain a contradiction, that there exists a time $t^*$ and some agent $\alpha^*$ for which $x_{t^*}(\alpha^*)<0$. Let then $- m := \inf_{\beta\in [0,1], t\in [0,t^*]} x_s(\beta)$ be the infimum of all opinions that have been held until time $t^*$. Our assumption that $x$ does not explode in finite time implies that $m$ is finite. 

Consider now an agent $\alpha$. Since $x_t(\alpha)$ is absolutely continuous with respect to $t$ and satisfies \eqref{mainiteration} for almost all $t$, there holds
$$
x_t(\alpha) - x_0(\alpha) = \int_{0}^{t} \left(\int_{\beta \in [0,1]} w(s,\alpha,\beta,x_s(\alpha),x_s(\beta)) (x_{s}(\beta)-x_s(\alpha))d\beta \right)ds
$$
for every $t$. It follows then from the definition of $m$ and the (uniform) boundedness and nonnegativity of $w$ that for every $t\in [0,t^*]$,
\begin{align*}
x_t(\alpha) - x_0(\alpha) &\geq \int_{0}^{t} \left(\int_{\beta \in [0,1]} w(s,\alpha,\beta,x_s(\alpha),x_s(\beta)) (-m-x_s(\alpha))d\beta \right)ds
\\&
\geq \int_{0}^{t} \left(\int_{\beta \in [0,1]} W .(-m-x_s(\alpha))d\beta \right)ds= \int_{0}^{t} W. (-m-x_s(\alpha)) ds,
\end{align*}
where we have used $-m-x_s(\alpha)\leq 0$ in the second inequality.
An integral form of Gronwall's inequality (see Section 1.1 in \cite{D03}) implies then that 
$$
\jmhr{x_t(\alpha)} \geq - m + (x_0(\alpha) + m) e^{-W t}, \hspace{.2cm} \forall t\leq t^*.
$$
Remembering that $x_0(\alpha)\geq 0$, we obtain then that  $x_t(\alpha) \geq - m + me^{-Wt^*}> -m$ holds 
for every $\alpha$ and $t\in [0,t^*]$, in contradiction with $0> -m = \inf_{t \in [0,t^*] , \alpha\in [0,1]}x_t(\alpha)$. Our hypothesis that $x_t(\alpha) <0 $ for some $t^*$ and $\alpha$ does thus never hold. A symmetric argument shows that $x_t(\alpha)\leq 1$ for every $\alpha$ and $t\geq 0$. As a consequence we also have $\abs{x_t(\alpha)-x_t(\beta)}\leq 1$, and the Lipschitz continuity condition $\abs{x_t(\alpha)-x_s(\alpha)}\leq W\abs{s-t}$ follows then directly from \eqref{mainiteration} and the bound $w(.,.,.,.,.)\leq W$. 
\end{proof}

\smallskip

\begin{definition}  Given a Borel measurable function $f: \R \rightarrow \R$ define \[ V_f(t) = \int_{0}^{1} f(x_t(\alpha)) ~d \alpha \] 
\end{definition}

\begin{lemma} \label{convdec} If $\ao{f: \R \rightarrow \R}$ is convex \jmh{and differentiable} \ao{everywhere} then for all $t \geq 0$, $\dot{V_f}(t) \leq 0$. \end{lemma}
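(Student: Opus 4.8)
The plan is to differentiate $V_f(t) = \int_0^1 f(x_t(\alpha))\,d\alpha$ under the integral sign, substitute the dynamics \eqref{mainiteration}, and then exploit the symmetry of $w$ via an exchange of the order of integration. Concretely, for almost all $t$ we should have
\[
\dot{V}_f(t) = \int_0^1 f'(x_t(\alpha))\,\dot x_t(\alpha)\,d\alpha = \int_0^1 \int_0^1 f'(x_t(\alpha))\, w(t,\alpha,\beta,x_t(\alpha),x_t(\beta))\,(x_t(\beta)-x_t(\alpha))\,d\beta\,d\alpha.
\]
Here I would invoke Lemma \ref{lem:x_[01]_Lips} to know that all opinions stay in $[0,1]$, so that $f'$ (being continuous, as $f$ is differentiable and convex on $\R$) is bounded on the relevant range, and $w \le W$; this makes the double integral absolutely convergent and justifies both differentiating under the integral sign and applying Fubini's theorem. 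The differentiation under the integral needs a dominated-convergence style argument using the uniform Lipschitz bound $|x_t(\alpha)-x_s(\alpha)|\le W|s-t|$ from Lemma \ref{lem:x_[01]_Lips} together with the boundedness of $f'$ on $[0,1]$.

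Next, I would use Fubini to also write the same quantity with the roles of $\alpha$ and $\beta$ interchanged:
\[
\dot{V}_f(t) = \int_0^1 \int_0^1 f'(x_t(\beta))\, w(t,\beta,\alpha,x_t(\beta),x_t(\alpha))\,(x_t(\alpha)-x_t(\beta))\,d\alpha\,d\beta,
\]
and then invoke the symmetry hypothesis $w(t,\alpha,\beta,x_t(\alpha),x_t(\beta)) = w(t,\beta,\alpha,x_t(\beta),x_t(\alpha))$ to identify the weight in both expressions. Averaging the two representations gives
\[
\dot{V}_f(t) = -\frac{1}{2}\int_0^1\int_0^1 w(t,\alpha,\beta,x_t(\alpha),x_t(\beta))\,\bigl(f'(x_t(\alpha))-f'(x_t(\beta))\bigr)\bigl(x_t(\alpha)-x_t(\beta)\bigr)\,d\beta\,d\alpha.
\]
Since $f$ is convex, $f'$ is nondecreasing, so $(f'(a)-f'(b))(a-b)\ge 0$ for all real $a,b$; together with $w\ge 0$ this forces the integrand to be nonnegative everywhere, hence $\dot V_f(t)\le 0$.

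I expect the main obstacle to be purely technical: rigorously justifying the differentiation under the integral sign and the application of Fubini's theorem, given that $x_t(\alpha)$ is only assumed absolutely continuous in $t$ for each fixed $\alpha$ and jointly measurable, and that $w$ is merely bounded and measurable rather than continuous. The key facts that make this go through are the a priori bounds from Lemma \ref{lem:x_[01]_Lips} — all opinions lie in the compact interval $[0,1]$ and evolve $W$-Lipschitz in $t$ uniformly in $\alpha$ — which give a uniform integrable dominating function for the difference quotients $\tfrac{f(x_{t+h}(\alpha))-f(x_t(\alpha))}{h}$, and the boundedness of the integrand $f'(x_t(\alpha))\,w(\cdot)\,(x_t(\beta)-x_t(\alpha))$ on $[0,1]^2$, which secures integrability for Fubini. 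One should be slightly careful that the identity $\dot V_f(t) = \int_0^1 f'(x_t(\alpha))\dot x_t(\alpha)\,d\alpha$ holds for almost every $t$ (the null set of bad $t$'s may depend on $\alpha$, but a Fubini argument on $[0,T]\times[0,1]$ handles this), which is all that is needed for the conclusion $\dot V_f(t)\le 0$ for all $t\ge 0$ in the sense appropriate to absolutely continuous $V_f$.
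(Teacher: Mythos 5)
Your proposal is correct and follows essentially the same route as the paper: differentiate under the integral sign (justified by the boundedness and uniform Lipschitz continuity from Lemma \ref{lem:x_[01]_Lips}), symmetrize the resulting double integral via Fubini and the symmetry of $w$, and conclude from the monotonicity of $f'$ that the integrand has a definite sign. The paper likewise resolves the ``almost every $t$'' subtlety by noting that $V_f$ is absolutely continuous with a.e.\ nonpositive derivative, hence nonincreasing.
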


\smallskip

\begin{proof} 
\jmh{In what follows, we use the abbreviation $\tilde w_{z,\alpha}$ to denote $w(t, z, \alpha, x_t(z), x_t(\alpha))$.} We argue as follows. 
\ao{First,  $V(t)$ is an absolutely continuous function which satisfies \[\dot{V}_f(t) = \frac{d}{dt} \int_0^1 f(x_t(\alpha)) ~ d \alpha = \int_0^1 \frac{d}{dt} f(x_t(\alpha)) ~ d \alpha \]  for almost all $t$. We justify this as follows. Observe that (i) $x_t(\alpha)$ is jointly measurable in $t$ and $\alpha$ because it is  measurable in $\alpha$ for fixed $t$ and Lipschitz continuous in $t$ for fixed $\alpha$ (by Lemma \ref{lem:x_[01]_Lips}). Since  $f(\cdot)$ is differentiable, $f(x_t(\alpha))$ is jointly measurable in $t$ and $\alpha$ as well. (ii) Because $x_t(\alpha)$ is Lipschitz continuous, we have that $f(x_t(\alpha))$ is integrable over $\alpha$ for any fixed $t$; moreover, for the same reason, $f(x_t(\alpha))$ is absolutely continuous with respect to $t$ for each $\alpha$. (iii) The almost everywhere time derivative of $f(x_t(\alpha))$ which is
$f'(x_t(\alpha)) \dot{x}_t(\alpha)$ is integrable over any compact subset of $(0,\infty) \times [0,1]$. It can then be proved (see for example Theorem 3 of \cite{pm}) that (i)-(iii) implies that the function $V_f(t)$ is absolutely continuous and satisfies the above equation for almost all $t$.}

\ao{Consequently,} 
\begin{eqnarray*}\dot{V}_f(t)
% &= &  \int_{0}^1 f'(x_t(\alpha)) \dot{x}_t(\alpha) ~d \alpha \\
%& = & \int_{0}^1 f'(x_t(\alpha)) \int_{0}^1 \tilde w_{z,\alpha} ( x_t(z) - x_t (\alpha)) ~dz ~d \alpha\\
& = & \int_{[0,1]^2} f'(x_t(\alpha))  \tilde w_{z,\alpha} ( x_t(z) -   x_t(\alpha) )  \aor{~dz ~d \alpha} \\
& = & \frac{1}{2} \int_{[0,1]^2} \tilde w_{z,\alpha}f'(x_t(a)) ( x_t(z) -   x_t(\alpha)  ) \aor{~dz ~d \alpha}  \\ && + \frac{1}{2} \int_{[0,1]^2}\tilde w_{z,\alpha}
f'(x_t(z)) ( x_t(\alpha) -   x_t(z) )  \aor{~dz ~d \alpha}  \\
& = & \frac{1}{2} \int_{[0,1]^2} \tilde w_{z,\alpha}(f'(x_t(\alpha)) - f'(x_t(z))) (x_t(z) - x_t(\alpha))  \aor{~dz ~d \alpha}
\end{eqnarray*}  \ao{Here we applied Fubini's theorem relying on the boundedness of $x_t(\alpha)$ for any fixed $t$, the boundedness of  $w(\cdot, \cdot, \cdot, \cdot, \cdot)$, and finally the boundedness $f'$ on the closed interval $[0,1]$.}

\ao{Now examining the final expression, we see that} since $w(\cdot, \cdot, \cdot, \cdot, \cdot)$ is nonnegative and $f'(\cdot)$ is an increasing function due to the convexity of $f$, the integrand is always nonpositive. \ao{The absolute continuity of $V_f(t)$ then implies that $V_f(t)$ is nonincreasing.}
\end{proof}

\smallskip

\begin{remark} \label{momentsremark} As a consequence of Lemma \ref{convdec}, the functions $m_t(k) = \int_0^1 x_t(\alpha)^k ~d \alpha$ are nonincreasing for each $k \geq 1$. Viewing \ao{each} $x_t$ as a random variable with state-space $[0,1]$, $m_t(k)$ has the interpretation that it is the $k$'th moment of this random variable.  
\end{remark}

\jmh{\vspace{.2cm}
We can now prove the main result of this section.
\vspace{.2cm}
}

\begin{proof}[Proof of Theorem \ref{weakconv}] Let us view each $x_t$ as a random variable defined on the state-space $\Omega=[0,1]$. Remark \ref{momentsremark} implies that the moments $m_t(k)$  are nonincreasing for $k \geq 1$; consequently, $\lim_{t \rightarrow \infty} m_t(k)$ exists for each $k \geq 1$. We next argue that there exists a random variable $x_{\infty}$ whose $k$'th moment is  \jmh{$\lim_{t\to\infty} m_t(k)$}. This follows from a result of Haussdorff, which is that a sequence $s = s_0, s_1, s_2, \ldots$ is a valid moment sequence for a random variable with values in $[0,1]$ if and only if a certain infinite family of linear combinations of the $s_i$ are nonnegative; each of these combinations has only finitely many nonzero coefficients (see \cite{st43}, Theorem 1.5). 
\jmh{Clearly, the moments of each $x_t$ satisfy this condition since each $x_t$ defines such a random variable. Moreover, every moment remains in a compact set independent of $t$. Therefore the limiting moments $\lim_{t\to\infty} m_t(k)$ also satisfy the condition of Haussdorff's result.}
We conclude that there is a random variable $x_{\infty}$ \ao{taking values in $[0,1]$} such that the moments of $x_t$ converge to the moments of $x_{\infty}$. \ao{Naturally, all the moments of $x_{\infty}$ are in $[0,1]$.}

Next, convergence of moments $x_t$ to the moments $x_{\infty}$ implies convergence in distribution of $x_t$ to $x_{\infty}$ if \ao{the distribution of} $x_{\infty}$ is uniquely defined by its moments (\cite{b86}, Theorem 30.2). However, any random variable whose \ao{moments are in $[0,1]$}  \ao{has the 
property that its distribution is}  uniquely defined by its moments (\cite{b86}, Theorem 30.1). Thus $x_t$ converges to $x_{\infty}$ in distribution.  By the Portmanteau theorem (\cite{p02}, Section 7.1), this immediately implies the conclusion of this Theorem.
\end{proof}

\smallskip

\jmh{As a final remark about Theorem \ref{weakconv}, we note that even though it is stated for one-dimensional opinions ($x_t(\alpha)\in \R$), it can directly be extended to opinions in $x_t(\alpha)\in\R^q$ satisfying a $q-$dimensional version of \eqref{mainiteration} provided that the interactions weights $w$ remain scalar or $q-$dimensional diagonal matrices. Indeed, one can in that case apply Theorem \ref{weakconv} to each component of $x_t(\alpha)$ separately.}

\section{Absence of convergence}\label{counterproof} \ao{This section is dedicated to proving Theorem \ref{counterexample} showing that, intriguingly, 
there exist opinion dynamics systems converging in distribution for which a positive measure set of agents does not converge.}

\subsection{Proof sketch} 
\ao{The proof, while somewhat technically involved, is based on a simple idea. We consider a situation in which there is always a constant mass  of agents {\sl uniformly distributed} between $[-1/2, 1/2]$ with every agent constantly cycling between $-1/2$ and $1/2$. We will show that this cycling can take place at a speed $v(t)$ which drops to zero with time but slowly enough so that every agent makes the loop from $-1/2$ to $1/2$ infinitely many times.} 

\ao{We accomplish this in terms of Eq. (\ref{mainiteration}) as follows. There will be a mass of agents uniformly distributed in $[-1/2,1/2]$ moving right; and similarly mass of agents uniformly distributed in $[-1/2,1/2]$ moving left. When each agent reaches $\pm 1/2$, it switches from
one mass to the other. Agents moving right put a positive weight on agents {\sl to their right moving left} while 
agents moving left put a positive weight on agents {\sl on their left moving right}.}

\ao{ Specifically, every agent moving right will put a constant weight on all agents moving left and located within an interval of size $O(\sqrt{v(t)})$ to its right; and similarly every agent moving left will put a constant weight on all agents moving right and located in an interval of size $O(\sqrt{v(t)})$ to its left.  The idea is that an agent moving right will be pulled by $O(\sqrt{v(t)})$ agents at an average distance of $O(\sqrt{v(t)})$, so that the average pull by each of the agents will be of strength $O(\sqrt{v(t)})$ because, $w(\cdot, \cdot, \cdot, \cdot, \cdot)$ being constant, the pulls agents exert on each other is proportional to their distance. Thus its velocity will come out to be $v(t)$. However, this will not work for those agents which are closer than $O(\sqrt{v(t)})$ to $1/2$ and $-1/2$ which do not have enough agents to their right and left, respectively.}

\ao{We fix this by introducing a unit mass of agents $C_R$ initially located at $C_0>1/2$ and a unit mass of agents $C_L$ initially located at $-C_0<-1/2$. Agents in $[-1/2,1/2]$ within a distance $O(\sqrt{v(t)})$ of $1/2$ will put a positive weight on agents in $C_R$ while agents in $[-1/2,1/2]$ within the same
distance of  $-1/2$ will put
a positive weight on agents in $C_L$. The weights will be precisely chosen so that the agents within $O(\sqrt{v(t)})$ of $1/2$ and $-1/2$ will cycle with speed $v(t)$, as the other agents in $[-1/2,1/2]$.  Note that
because the weights are symmetric, this has the effect of pushing the agents  in $C_R$ to the left over time and similarly pushing the 
agents in $C_L$ right over time. The user may refer to Figure \ref{fig:counterex} for a graphical representation.}

\ao{{\sl The key point is that the weights $w(t,\alpha, \beta, x_t(\alpha), x_t(\beta))$ between, say, $ \alpha \in C_R$ and $\beta$ in the right corner of $[-1/2,1/2]$ only need to be of magnitude $O(v(t))$ to make $\beta$ cycle with speed $v(t)$.} This is simply because there is a unit mass of agents in $C_R$ and so as long as the separation between $[-1/2,1/2]$ and the agents in $C_R$ is bounded away from zero, which we will show can be achieved, the weight needed to pull the agent $\beta$ with speed $v(t)$ to the right is $O(v(t))$. Similarly, the weights between $C_L$ and the left corner of $[-1/2,1/2]$ need to be of magnitude $O(v(t))$. }

\ao{Consequently, agents in $C_R$ move left with speed $O(\sqrt{v(t)}) \cdot O(v(t)) = O ( v(t)^{1.5})$, since they get pulled left by a mass of 
agents of size $O(\sqrt{v(t)})$ on which they put weights of magnitude $O(v(t))$. If $v(t)$ is chosen so that $\int_0^{\infty} v(t)  = + \infty$ while $\int_0^{\infty} v(t)^{1.5}$ is finite and sufficiently small,  the agents in $C_R$ do not approach $+1/2$ but rather remain bounded away from it. Similarly, the agents in $C_l$ remain bounded away from $-1/2$. Thus the motion we
have just described - agents in $C_R$ moving left, agents in $C_L$ moving right, and agents between $[-1/2,1/2]$ cycling - goes on forever, and the 
positive mass of agents cycling in $[-1/2,1/2]$ never converge, even though the speed of the agents decay to 0 and the system converges in distribution consistently with Theorem \ref{weakconv}. }

\subsection{The proof} \ao{Without further ado, we proceed to the technical details of the argument. }

\smallskip
\begin{proof}
We first describe an evolution $x$ for which a positive measure set of agents does not converge. 
We then define some symmetric weights $w(t,\alpha,\beta)$, and show that this evolution $x$ is a solution of (\ref{mainiteration}) for those weights.

For the clarity of exposition, the agents are separated in three distinct sets, and $x_0$ is bounded but its image is not included in $[0,1]$. The first two sets are $C_R$ and $C_L$, for right and left cluster respectively, both of measure 1. The third set is $I=[0,2]$. Our system can be recasted in the form (\ref{mainiteration}) with agents indexed on $[0,1]$ by a simple change of variable. Besides, when there is no risk of ambiguity, we will sometimes refer to the agents as moving instead of as having their opinions changing, and call \quotes{velocity} the variation rate $\dot x_t(\alpha)$ of their opinions.

\vspace{.3cm}
\noindent\emph{A. Definition of $x$}
\vspace{.2cm}

\begin{figure}
\centering
\includegraphics[scale=.42]{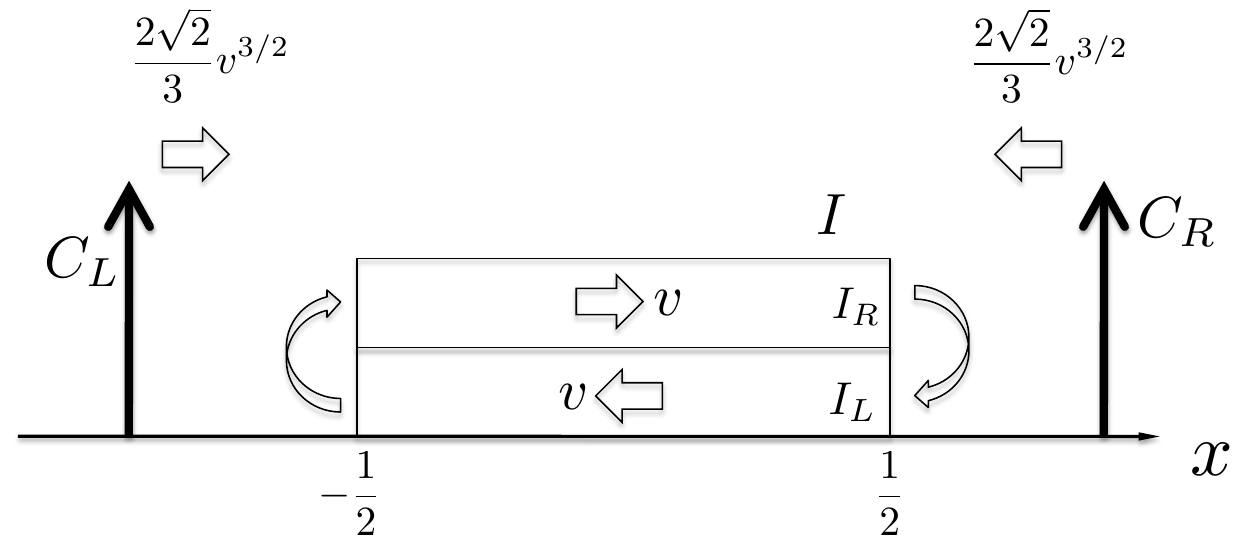}

\caption{Representation of the density of agents corresponding to the non-converging evolution $x$ used in the proof of Theorem \ref{counterexample}. The agents in $I=I_R\cup I_L$ continuously move from left to right, with a speed $v$ whose integral does not converge. The clusters $C_L$ and $C_R$ on the other hand move with a speed proportional to $v^{3/2}$, the integral of which does converge, so that they never reach $[-\frac{1}{2},\frac{1}{2}]$.
}\label{fig:counterex}
\end{figure} 
The evolution $x$ is represented in Fig. \ref{fig:counterex}. 

Formally, we select a function $v:\R^+\to\R^+$ such that $\int_0^\infty v(t)dt$ diverges, but $\int_0^\infty v^{3/2}(t)dt <\infty$ (for example $v(t) = (t+1)^{-3/4}$). 
Let then $S(t) = -\frac{1}{2}+\abs{1-\prt{t \mod  2 }}$, where $t \mod 2$ denotes the unique value in $[0,2)$ equal to $t-2k$ for some integer $k$. 
%The function $S$ is represented in Fig. \ref{fig:S(t)}.
Observe that $S(t)\in [-\frac{1}{2},\frac{1}{2}]$, and that $\frac{dS}{dt}(t)$ is defined everywhere except for integer $t$, with $\frac{dS}{dt}(t) =1$ when $\floor{t}$ is odd and $-1$ when $\floor{t}$ is even. We now define $x$, represented in Fig. \ref{fig:counterex}, in the following way:
%\begin{figure}
%\centering
%\includegraphics[scale=.3]{repres_s_fig.pdf}
%\caption{Representation of the function $S(t) = -\frac{1}{2}+\abs{1-\prt{t \mod  2 }}$ that allows defining the trajectory of the agents' %opinions.}\label{fig:S(t)}
%\end{figure}

\begin{itemize}
\item{If $\alpha\in I$, then $x_t(\alpha) = S\prt{\alpha + \int_0^tv(s)ds}$.}
\item{If $\alpha \in C_L$, then\\ $x_t(\alpha)= x_t(C_L):= -C_0 + \int_0^t \frac{2\sqrt{2}}{3}v(s)^{3/2}  ds$.}
\item{If $\alpha \in C_R$, then\\ $x_t(\alpha) = x_t(C_R) := C_0 - \int_0^t \frac{2\sqrt{2}}{3}v(s)^{3/2}  ds$},
\end{itemize}
for some $C_0> 1/2 +  \int_0^\infty \frac{2\sqrt{2}}{3}v(s)^{3/2}  ds. $ 

Observe that $x$ does not converge, as the agents in $I$ keep moving with speed $v$, the integral of which diverges. The evolution $x$ converges however in distribution, consistently with Theorem \ref{weakconv}. Indeed, one can verify that the density of agents on $[-1/2,1/2]$ remains constant at $2$ for all $t$. 
At the same time, the opinions of the agents in $C_L$ and $C_R$ converge monotonously  to some points out of the interval $[-1/2,1/2]$ due to our assumption on $C_0$.

In order to describe precisely the velocities and to later define the interaction weights, it is convenient to separate $I$ in two time-varying disjoint sets. For every $t$, we let $I_R(t)$ be the set of those $\alpha\in [0,2]$ for which $\floor{\alpha + \int_0^t v(s)}$ is odd, and $I_L(t)$ the set of those for which this expression is even. Observe that since the shift $\int_0^t v(s)$ is the same for all agents, which are all initially in $[0,2]$, $I_R(t)$ and $I_L(t)$ both have the same measure 1, and the \quotes{density} of agents of $I_R$ and $I_L$ on $[-\frac{1}{2},\frac{1}{2}]$ are both uniform and equal to 1 at all time. Besides, one can verify using the definition of $x_t(\alpha)$ on $I$ that $\dot x_t(\alpha) = v(t)$ 
for every $\alpha \in I_R(t)$ and $\dot x_t(\alpha) = -v(t)$ for every $\alpha \in I_L(t)$ (except for the two agents for which $\alpha + \int_0^t v(s)$ is an integer, which we will neglect in this proof). Similarly, $\dot x_t(\alpha) = \frac{2\sqrt{2}}{3}v(t)^{3/2}$ for every $\alpha \in C_L$, and $\dot x_t(\alpha) = -\frac{2\sqrt{2}}{3}v(t)^{3/2}$ if $\alpha \in C_R$. These four equalities actually entirely characterize $\dot x_t(\alpha)$ (up to the two points where it might not be defined).

\vspace{.3cm}
\noindent\emph{B. Interaction weights}
\vspace{.2cm}

We now define some symmetric weights $w$ in order to later show that $x_t$ is a solution to (\ref{mainiteration}) for these weights. The idea is that agents in $I$ will move thanks to an attraction exerted by agents moving in the other direction and lying in an interval of length $\epsilon(t)$ ahead of them, for an appropriately chosen $\epsilon(t)$. Agents at distance less than $\epsilon$ from the edge of the interval to which they are moving are in addition attracted by the clusters $C_L$ or $C_R$, as represented in Fig. \ref{fig:counterex_interactions}. These clusters are attracted in return, but move sufficiently slowly so that they never reach the interval $[-\frac{1}{2},\frac{1}{2}]$.

\begin{figure}
\centering 
\includegraphics[scale = .5]{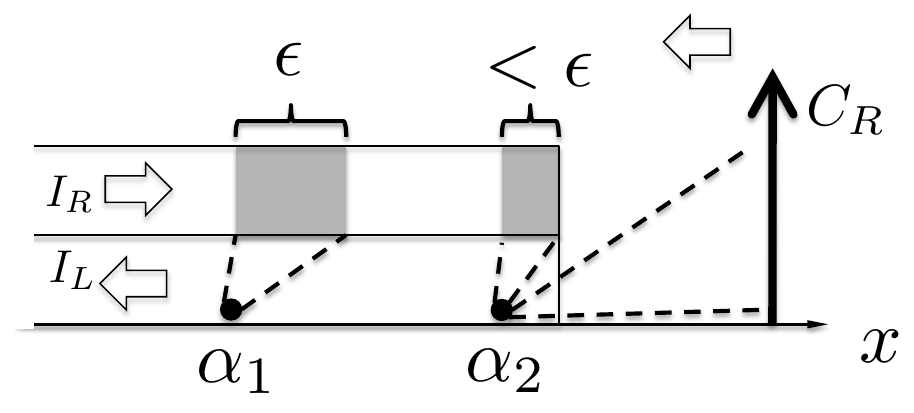}
\caption{Representations of the interactions in the example used in the proof of Theorem \ref{counterexample}. The agent $\alpha_1$ moves to the right hand side, and is attracted by all the agents moving on the left hand side that are in front of it and distant by less than $\epsilon$ form it, guaranteeing a speed $v$. The agent $\alpha_2$ is a distance smaller then $\epsilon$ from the edge of the distribution, so there are not enough agents to attract it with a speed $v$ (if interactions have the same intensity). This is compensated by interactions with the cluster $C_R$.   }\label{fig:counterex_interactions}
\end{figure}

The interactions between the agents of $I$ are defined by 
\begin{equation*}
w(t,\alpha,\beta) = 1  \text{ if }    \alpha \in I_R(t),\beta \in I_L(t) \text{ and } 0  < x_t(\beta)-x_t(\alpha) < \epsilon(t),
\end{equation*}
and 0 else, 
where we remind the reader that all weights are symmetric. In addition, agents of $I_R(t)$ with opinions above $\frac{1}{2}-\epsilon(t)$ interact with those of $I_R(t)$ with weights
\begin{equation}\label{eq:weights_right_cluster}
w(t,\alpha,\beta) = \frac{\epsilon(t)^2 -\prt{\frac{1}{2}- x_t(\alpha) }^2}{2\prt{x_t(C_L) - x_t(\alpha) }} 
\end{equation}
for $\alpha \in I_R(t),$ $\beta\in C_R$ and $x_t(\alpha) \geq \frac{1}{2}-\epsilon(t).$
Similarly,
\begin{equation*}
w(t,\alpha,\beta) = \frac{\epsilon(t)^2 -\prt{\frac{1}{2} + x_t(\alpha) }^2}{2\prt{x_t(\alpha) - x_t(C_R) }} 
\end{equation*}
if $\alpha \in I_L(t),$ $\beta\in C_L$ and $x_t(\alpha) \leq - \frac{1}{2}+\epsilon(t).$

\vspace{.3cm}
\noindent\emph{C. $x$ solution of (\ref{mainiteration}) for these weights $w$.}
\vspace{.2cm}

To show that $x$ is a solution of the system (\ref{mainiteration}) for these weights, 
we just need to show that the right-hand side term of (\ref{mainiteration}), $\int  w(t,\alpha,\beta)(x_t(\beta)-x_t(\alpha))d\beta$, is equal to the velocities $\dot x_t(\alpha)$ computed in part A.  This expression should thus be equal to $v(t)$ (resp. $-v(t)$) for $\alpha\in I_R(t)$ (resp. $I_L(t)$), and to $\frac{2\sqrt{2}}{3}v(t)^{3/2}$ (resp. $-\frac{2\sqrt{2}}{3}v(t)^{3/2}$) for $\alpha \in C_L$ (resp. $C_R$), neglecting again the two agents for which $\dot x_t(\alpha)$ is not defined.

We first consider an agent $\alpha\in I_R(t)$ distant from the boundary $\frac{1}{2}$ by more than $\epsilon(t)$, and interacting thus only with those agents of $I_L(t)$ having opinions between $x_\alpha(t)$ and $x_\alpha(t) + \epsilon(t)$. Its velocity $\dot x_t(\alpha)$ is
$$
\int_{\beta\in I_L(t):x_t(\beta) \in [x_t(\alpha),x_t(\alpha) + \epsilon(t)]}w(\alpha,\beta,t) \prt{x_t(\beta)-x_t(\alpha)} d\beta.
$$
Since the density of $I_L(t)$ is $1$ over the whole interval $(-\frac{1}{2},\frac{1}{2})$, we have
\begin{equation}\label{eq:alpha_normal}
\dot x_t(\alpha) = \int_{y = x_t(\alpha)} ^{x_t(\alpha)+\epsilon(t)}\prt{y-x_t(\alpha)}  dy = \frac{1}{2}\epsilon(t)^2  = v(t),
\end{equation}
consistently with the definition of $x$. Let us now consider an agent  $\alpha\in I_R(t)$ distant from the boundary $\frac{1}{2}$ by less than $\epsilon(t)$. Such agents interact with agent in $I_L(t)$ and 
with the agents in $C_R$. There holds thus 
\begin{align*}
\dot x_t(\alpha) =\int_{\beta\in I_L(t):x_t(\beta) \in [x_t(\alpha),x_t(\alpha) +  \epsilon(t)]}w(\alpha,\beta,t) \prt{x_t(\beta)-x_t(\alpha)} d\beta \\
+ \int_{\gamma \in C_R} w(\alpha,\beta,t) \prt{x_t(\gamma) - x_t(\alpha)}d\gamma.
\end{align*}
Taking into account that the density of $I_R(t)$ is $1$ in $(-\frac{1}{2},\frac{1}{2})$, the fact that $x_t(\alpha)>\frac{1}{2}-\epsilon(t)$ and the fact that all agents in $C_R$ have the same opinion $X_t(C_r)$, and interact with $\alpha$ according to (\ref{eq:weights_right_cluster}), we obtain again \begin{small}
\begin{align*}
\dot x_t(\alpha)= & \int_{y = x_t(\alpha)} ^{1/2}\prt{y-x_t(\alpha)}  dy + \frac{\epsilon(t)^2-\prt{\frac{1}{2}- x_t(\alpha) }^2}{2\prt{x_{t}(C_R) - x_t(\alpha) } }(x_t(C_R) - x_t(\alpha)) \int_{\gamma \in C_r}d\gamma \\ =  &
\frac{1}{2}\prt{\frac{1}{2}-x_t(\alpha)}^2 + \frac{1}{2}\prt{\epsilon(t)^2-\prt{\frac{1}{2}- x_t(\alpha) }^2}   = \frac{1}{2}\epsilon(t)^2 =v(t).
\end{align*} \end{small}
We now consider an agent $\alpha$ in the right cluster $C_R$. All agents in $C_R$ have the same opinion $x_t(C_R)$ and are interacting with agents in $I_R(t)$ at a distance less than $\epsilon(t)$ from $1/2$, with the weights (\ref{eq:weights_right_cluster}), so that $\dot x_t(\alpha) =$
$$
\int_{\beta \in I_R(t): x_t(\beta) \geq \frac{1}{2} -\epsilon(t)} 
\frac{\epsilon^2- \prt{\frac{1}{2}- x_t(\beta) }^2}{2\prt{x_{t}(C_R) - x_t(\alpha) } }(x_t(\beta) - x_t(C_R)  )
d\beta 
$$
Using again the the density of $I_R(t)$ is 1 over $(-\frac{1}{2},\frac{1}{2})$, we obtain
\begin{align*}
\dot x_t(\alpha) &= -\frac{1}{2}\int_{y= 1/2- \epsilon}^{1/2} \prt{\epsilon^2- \prt{\frac{1}{2}- y}^2}dy = \frac{1}{3}\epsilon(t)^3 =- \frac{2\sqrt{2}}{3}v(t)^{3/2}.
\end{align*}
consistently with our definition of $x$. We have thus proved that $x$ satisfies (\ref{mainiteration}) with the symmetric weights that we have defined for all agents in $I_R(t)$ and $C_R$. A symmetric argument applies to agents in $I_L(t)$ and $C_L$, so that $x$ is indeed a solution of  (\ref{mainiteration}), which achieves the proof, since we have already seen that $x_t(\alpha)$ does not converge for any $\alpha \in I = I_L(t) \cup I_R(t)$.
\end{proof}

\section{Convergence for almost all agents\label{strongsection}} 

In this section we will prove Theorems \ref{densityconvergence} and \ref{thm:Lips->conv}, providing in both theorems sufficient conditions for the convergence of almost all agents. As we will see, proving these theorems requires us to go beyond establishing the existence of limits for moments of $x_t(\alpha)$. Rather, we will need to derive explicit convergence rates for these moments and argue that only certain patterns of motion on the part of the agents are consistent with those
rates.

\subsection{Proof of Theorem \ref{densityconvergence}\label{densproof}} 

\jmh{Our proof has two parts. First we prove that the mass of agent opinions eventually concentrates on a finite number of points $z_1,\dots,z_k$. Then we will argue that no agent, except possibly those in a zero measure set, can move from the neighborhood of one of these points to the neighborhood of another infinitely often.}

We begin with the key lemma which explores a consequence of the decay of the second moment $m_2(t)$. \ao{We remind the reader that all the
assumptions made in the introduction are assumed to hold in this section.}

\smallskip

\begin{lemma} \label{finiteintegrability} \[ \int_0^{\infty} \int_{\jmh{[0,1]^2}} w(t, \alpha, \beta, x_t(\alpha), x_t(\beta)) (x_t(\beta) - x_t(\alpha))^2 ~d \alpha ~d \beta ~dt < \infty \]
\end{lemma}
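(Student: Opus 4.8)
The plan is to obtain a quantitative version of the computation underlying Lemma \ref{convdec}, applied to the convex differentiable function $f(x)=x^2$. Recall from the proof of Lemma \ref{convdec} that for $f$ convex and differentiable, $V_f(t)=\int_0^1 f(x_t(\alpha))\,d\alpha$ is absolutely continuous and
\[
\dot V_f(t) = \frac12\int_{[0,1]^2} \tilde w_{z,\alpha}\,\bigl(f'(x_t(\alpha))-f'(x_t(z))\bigr)\bigl(x_t(z)-x_t(\alpha)\bigr)\,dz\,d\alpha,
\]
where $\tilde w_{z,\alpha}=w(t,z,\alpha,x_t(z),x_t(\alpha))$. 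For $f(x)=x^2$ we have $f'(x)=2x$, so $\bigl(f'(x_t(\alpha))-f'(x_t(z))\bigr)\bigl(x_t(z)-x_t(\alpha)\bigr) = -2\bigl(x_t(z)-x_t(\alpha)\bigr)^2$, hence
\[
\dot V_{f}(t) = -\int_{[0,1]^2} \tilde w_{z,\alpha}\,\bigl(x_t(z)-x_t(\alpha)\bigr)^2\,dz\,d\alpha.
\]
That is, the instantaneous rate of decrease of the second moment $m_2(t)=V_{x^2}(t)$ is exactly the (nonnegative) integrand appearing in the lemma.

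Next I would integrate this identity in $t$ over $[0,T]$. Since $V_f$ is absolutely continuous, $V_f(T)-V_f(0)=\int_0^T \dot V_f(t)\,dt$, so
\[
\int_0^T \int_{[0,1]^2} w(t,\alpha,\beta,x_t(\alpha),x_t(\beta))\,\bigl(x_t(\beta)-x_t(\alpha)\bigr)^2\,d\alpha\,d\beta\,dt = m_2(0)-m_2(T).
\]
By Lemma \ref{lem:x_[01]_Lips} all opinions remain in $[0,1]$, so $0\le m_2(T)\le 1$ for every $T$, and in particular $m_2(0)-m_2(T)\le m_2(0)\le 1$. Thus the left-hand side is bounded above by $m_2(0)\le 1$ uniformly in $T$. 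Since the integrand is nonnegative, the monotone convergence theorem (letting $T\to\infty$) gives that the full integral over $[0,\infty)$ is finite, which is exactly the claim.

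I do not expect a serious obstacle here; the only point requiring a little care is the justification that differentiation under the integral sign and Fubini's theorem are legitimate for $f(x)=x^2$ — but this is precisely what was already established in the proof of Lemma \ref{convdec}, using the boundedness of $x_t(\alpha)$ on $[0,1]$ (Lemma \ref{lem:x_[01]_Lips}), the boundedness of $w$ by $W$, and the boundedness of $f'(x)=2x$ on $[0,1]$. Hence I would simply invoke Lemma \ref{convdec}'s proof for the key identity and add the one extra step of integrating in time and using $0\le m_2(T)\le 1$.
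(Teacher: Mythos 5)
Your proposal is correct and follows essentially the same route as the paper: the paper likewise computes $\dot m_2(t) = -\int_{[0,1]^2}\tilde w_{z,\alpha}(x_t(z)-x_t(\alpha))^2\,dz\,d\alpha$ (explicitly noting that the derivation parallels Lemma \ref{convdec}) and concludes finiteness from $m_2(t)\ge 0$. Your version merely makes the final step --- integrating in $t$, bounding by $m_2(0)\le 1$, and invoking monotone convergence --- slightly more explicit than the paper does.
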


\smallskip

\begin{proof} 
Recall that $m_2(t) =\int_0^1 x_t(\alpha)^2 ~d \alpha$. \jmh{Using again the abbreviation $\tilde w_{z,\alpha}$ to denote $w(t, z, \alpha, x_t(z), x_t(\alpha))$,} we have,  
\begin{eqnarray*} \dot{m}_2(t) & = & 2 \int_{[0,1]} x_t(\alpha) \dot{x}_t(\alpha) ~d \alpha \\
%& = & 2 \int_{[0,1]} x_t(\alpha) \left( \int_{[0,1]} \tilde w_{z,\alpha} ( x_t(z) - x_t (\alpha)) ~dz \right) \aor{~d \alpha} \\
& = & 2 \int_{[0,1]^2} \tilde w_{z,\alpha} x_t(\alpha) x_t(z)\aor{~dz ~d \alpha} - 2 \int_{\ao{[0,1]^2}}\tilde w_{z,\alpha}  x_t^2(\alpha) \aor{~ dz ~ d \alpha} \\ 
%& = & \int_{[0,1]^2} \tilde w_{z,\alpha} \left( x_t(\alpha)^2 + x_t^2(z) - (x_t(\alpha) - x_t(z))^2 \right)  \aor{~ dz ~ d \alpha}  - 2 \int_{\ao{[0,1]^2}}\tilde w_{z,\alpha}  x_t^2(\alpha)  \aor{~ dz ~ d \alpha} \\
& = & - \int_{[0,1]^2} \tilde w_{z,\alpha} ( x_t(\alpha) - x_t(z))^2   \aor{~ dz ~ d \alpha} 
\end{eqnarray*}  where the final step used the symmetry of $w$. We remark that this derivation closely parallels the proof of Lemma \ref{convdec} \ao{and  the same justifications for differentiating under the integral sign and using Fubini's theorem apply}. Now since $m_2(t) \geq 0$ we have that the the integral of the last quantity must be finite. 
\end{proof}

\smallskip

We now seek to convert this lemma into a slightly more convenient form. To that end, we use the following fact, whose proof is routine.

\smallskip

\begin{lemma} Let $f: \R \rightarrow \R$ be a Borel measurable function and let $A$ be a set with positive Lebesgue measure $\mathcal{L}(A)$. Then
\[ \int_{(x,y) \in A \times A} (f(x) - f(y))^2  = 2 {\mathcal L}(A) \int_{x \in A} \left( f(x) - \frac{1}{\mathcal{L}(A)} \int_{y \in A} f(y) \right)^2  \]
\label{varchange} \end{lemma}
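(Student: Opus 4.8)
The plan is to prove Lemma~\ref{varchange} by a direct computation, expanding the square and recognizing the right-hand side as (twice $\mathcal{L}(A)$ times) the variance of $f$ over $A$ with respect to normalized Lebesgue measure. First I would write $\bar f = \frac{1}{\mathcal{L}(A)}\int_{y\in A} f(y)\,dy$ for the average value of $f$ on $A$; note this is finite because we may assume $f$ is integrable on $A$ (if $f\notin L^2(A)$ both sides are $+\infty$ and there is nothing to prove, so we work in the case that the relevant integrals are finite, which is how the lemma is applied in the sequel with $f = x_t(\cdot)$ bounded).

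The main step is the expansion
\begin{align*}
\int_{(x,y)\in A\times A} (f(x)-f(y))^2 \,dx\,dy &= \int_{A\times A} f(x)^2 \,dx\,dy - 2\int_{A\times A} f(x)f(y)\,dx\,dy + \int_{A\times A} f(y)^2\,dx\,dy\\
&= 2\,\mathcal{L}(A)\int_A f(x)^2\,dx - 2\left(\int_A f(x)\,dx\right)^2,
\end{align*}
using Fubini to separate the double integrals. On the other side,
\begin{align*}
2\,\mathcal{L}(A)\int_{x\in A}\left(f(x)-\bar f\right)^2 dx &= 2\,\mathcal{L}(A)\left(\int_A f(x)^2\,dx - 2\bar f\int_A f(x)\,dx + \mathcal{L}(A)\bar f^2\right)\\
&= 2\,\mathcal{L}(A)\int_A f(x)^2\,dx - 2\,\mathcal{L}(A)\bar f\int_A f(x)\,dx,
\end{align*}
where the last equality uses $\mathcal{L}(A)\bar f = \int_A f(x)\,dx$ to combine the final two terms. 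Substituting $\bar f = \frac{1}{\mathcal{L}(A)}\int_A f(x)\,dx$ once more into $2\,\mathcal{L}(A)\bar f \int_A f(x)\,dx$ gives $2\left(\int_A f(x)\,dx\right)^2$, so the two sides agree.

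There is no real obstacle here; the only point requiring any care is the integrability hypothesis needed to apply Fubini and to make $\bar f$ well-defined, which I would handle by the remark above (reduce to the case where all integrals in sight are finite, the only case used later where $f$ is a bounded measurable function restricted to $A\subseteq[0,1]$). I would keep the write-up to a few lines since, as the paper already notes, the proof is routine.
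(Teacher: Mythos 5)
Your proof is correct and is precisely the routine expand-the-square/Fubini computation that the paper has in mind when it omits the proof with the remark that it "is routine"; the algebra checks out on both sides (each equals $2\mathcal{L}(A)\int_A f^2 - 2(\int_A f)^2$). The integrability caveat you add is sensible and harmless, since in the paper's application $f = x_t(\cdot)$ is bounded on $A \subseteq [0,1]$.
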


Our next step is to combine the previous two lemmas \jmh{with the assumptions on positive interactions at distance less than $r$} into the following result. \jmh{{\sl Thus the assumptions of Theorem \ref{densityconvergence} will be henceforth made for all results in the remainder of Section \ref{densproof}.}}

\smallskip

\begin{lemma} \label{pointpicking} Let $I$ be an interval of length less than $r$ and let $B(I,t)$ be the set of $\alpha$ such that $x_t(\alpha) \in I$. 
%\jmh{Under the assumptions of Theorem \ref{densityconvergence}}, 
There is a \ao{function $z(t)$} whose range lies in $I$ such that 
\[ \lim_{t \rightarrow \infty} \int_{\aor{\alpha \in} B(I,t)} \left( x_t(\alpha) - \ao{z(t)} \right)^2  = 0 \]
\end{lemma}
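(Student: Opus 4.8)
The idea is to exploit Lemma \ref{finiteintegrability} together with the hypothesis $w \in \Gamma(r,\Delta)$. Fix an interval $I$ of length less than $r$. Since any two agents $\alpha,\beta$ with $x_t(\alpha),x_t(\beta)\in I$ are within distance $< r$ of each other, the hypothesis gives $w(t,\alpha,\beta,x_t(\alpha),x_t(\beta))\geq\Delta$ for all such pairs. Hence the integrand in Lemma \ref{finiteintegrability}, restricted to $(\alpha,\beta)\in B(I,t)\times B(I,t)$, dominates $\Delta(x_t(\beta)-x_t(\alpha))^2$, so
\[
\int_0^\infty \int_{B(I,t)\times B(I,t)} \Delta\,(x_t(\beta)-x_t(\alpha))^2 \, d\alpha\,d\beta\,dt < \infty .
\]
Applying Lemma \ref{varchange} with $A = B(I,t)$ and $f = x_t$, and writing $\bar{x}(t) := \frac{1}{\mathcal{L}(B(I,t))}\int_{B(I,t)} x_t(\beta)\,d\beta$ (the mean opinion within $I$ at time $t$, defined whenever $\mathcal{L}(B(I,t))>0$), the inner double integral equals $2\,\mathcal{L}(B(I,t))\int_{B(I,t)}(x_t(\alpha)-\bar{x}(t))^2\,d\alpha$. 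Therefore, setting
\[
g(t) := \int_{\alpha\in B(I,t)} \big(x_t(\alpha)-\bar{x}(t)\big)^2\, d\alpha ,
\]
we have $\int_0^\infty \mathcal{L}(B(I,t))\, g(t)\, dt < \infty$.

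The natural candidate for $z(t)$ is $\bar{x}(t)$ (and any value of $I$, say its left endpoint, on the null set of times where $B(I,t)$ has zero measure); its range lies in $I$ since it is an average of values in $I$. What the lemma claims, however, is the stronger pointwise-in-$T$ statement that $\int_{\alpha\in B(I,T)}(x_t(\alpha)-z(t))^2\,d\alpha \to 0$ — note the subtlety that the domain of integration is the \emph{fixed} set $B(I,T)$ while the integrand uses $x_t$ and $z(t)$ at the running time $t$. So I would reinterpret: actually the intended reading (matching the later use) is presumably $\int_{\alpha\in B(I,t)}(x_t(\alpha)-z(t))^2\,d\alpha\to 0$, i.e. $g(t)\to 0$, and I will aim to establish that. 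The obstacle is that finiteness of $\int_0^\infty \mathcal{L}(B(I,t)) g(t)\,dt$ does not by itself force $g(t)\to 0$: the integrand could have tall thin spikes, and the weight $\mathcal{L}(B(I,t))$ could in principle degenerate.

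To rule out spikes I would use regularity in $t$. By Lemma \ref{lem:x_[01]_Lips}, each $x_t(\alpha)$ is $W$-Lipschitz in $t$ uniformly in $\alpha$; this makes $B(I,t)$ change slowly — more precisely, an agent in $B(I,t)$ stays within an interval of length $\leq \ell(I) + 2W|s-t|$ around $z(t)$ for times $s$ near $t$, which lets one show $g(\cdot)$ cannot drop from a large value to near zero and back too quickly, so that $g(t)\not\to 0$ would contradict integrability against a weight that is itself shown to be bounded below along a suitable subsequence. The cleanest route is probably: suppose $g(t_n)\geq \epsilon$ along $t_n\to\infty$; cover $I$ by finitely many subintervals, use a pigeonhole/averaging argument over these subintervals together with the fact that the total mass $\sum$ over subintervals of $\mathcal{L}(B(I_j,t))g_j(t)$ controls $\mathcal{L}(B(I,t))g(t)$ up to a constant depending only on the number of subintervals, to conclude the weighted integral diverges. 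I expect this spike-elimination-plus-mass-control step to be the main technical obstacle; everything before it (the two applications of the preceding lemmas) is routine.
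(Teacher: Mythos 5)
Your plan follows the same route as the paper: lower-bound $w$ by $\Delta$ on $B(I,t)\times B(I,t)$ using the $\Gamma(r,\Delta)$ hypothesis, invoke Lemma \ref{finiteintegrability} to get time-integrability of $\int_{B(I,t)^2}(x_t(\alpha)-x_t(\beta))^2$, take $z(t)$ to be the mean via Lemma \ref{varchange}, and use Lipschitz continuity in $t$ to rule out spikes. (Your reading of the statement, with $B(I,t)$ in place of $B(I,T)$, also matches how the lemma is used later.) However, the step you explicitly defer --- ``spike elimination'' --- is precisely where the content of the proof lies, and your sketch of it (a pigeonhole over a finite cover of $I$ by subintervals) is not carried out and is not the argument that works. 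The mechanism the paper uses is the following: if $\int_{B(I,t_k)^2}(x_{t_k}(\alpha)-x_{t_k}(\beta))^2>\epsilon'$ along some $t_k\to\infty$, then, since squared differences are bounded by $1$, the set $\Delta_{t_k}$ of \emph{pairs} $(\alpha,\beta)$ with $(x_{t_k}(\alpha)-x_{t_k}(\beta))^2>\epsilon'/2$ has measure at least $\epsilon'/2$; by the uniform $W$-Lipschitz bound of Lemma \ref{lem:x_[01]_Lips}, every such pair keeps $(x_t(\alpha)-x_t(\beta))^2\geq\epsilon'/4$ and stays inside a slightly enlarged interval $I'\supset I$ of length still less than $r$ throughout a window $[t_k,t_k']$ of uniformly positive length; summing over $k$ makes $\int_t\int_{B(I',t)^2}(x_t(\alpha)-x_t(\beta))^2$ diverge, which contradicts Lemma \ref{finiteintegrability} because $w\geq\Delta$ on $B(I',t)\times B(I',t)$. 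Note that this argument must be run on the double integral over pairs rather than on your variance $g(t)$ (a positive-measure set of separated pairs is what persists under the Lipschitz bound), and the enlargement to $I'$ is essential since agents may exit $I$ during the window. Finally, your worry that the weight $\mathcal{L}(B(I,t))$ might degenerate is a non-issue: since $x_t(\alpha)$ and $z(t)$ both lie in $I$, one has $g(t)\leq \ell(I)^2\,\mathcal{L}(B(I,t))$, hence $g(t)^2\leq \tfrac{\ell(I)^2}{2}\cdot 2\mathcal{L}(B(I,t))\,g(t)\to 0$, so $g(t)\to 0$ follows at once once the double integral tends to zero.
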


\begin{proof}
We first prove that 
\begin{equation}\label{eq:lim_square_I}
\lim_{\aor{t \rightarrow \infty} } \int_{\alpha, \beta \in B(I,t)} (x_t(\alpha) - x_t(\beta))^2 = 0.
\end{equation}

Indeed, suppose that $I = [a,b]$ and let $I' = [a-\epsilon,b+\epsilon]$ where we pick 
$\epsilon$ so that the length of $I'$ is \aor{positive but } less than $r$. Suppose, to obtain a contradiction, that \eqref{eq:lim_square_I} does not hold and thus that \begin{equation} \label{intlowerbound} \int_{\alpha, \beta \in B(I,t_k)} (x_{t_k}(\alpha) - x_{t_k}(\beta))^2   >  \epsilon' \end{equation} \ao{for some $\epsilon' > 0$ and for a sequence of times $t\aor{_k}$ which approaches infinity}. Let $\Delta_{t_k}\subseteq B(I,t_k) \times B(I, t_k)$ be the set of ordered pairs $(\alpha,\beta)$ for which $(x_{t_k}(\alpha) - x_{t_k}(\beta))^2 > \epsilon'/2$. Since $x_t(\alpha) \in [0,1]$ by Lemma \ref{lem:x_[01]_Lips}, it follows from Eq. \eqref{intlowerbound} that $\Delta_{t_k}$ has measure at least $\epsilon'/2$ (else, the integral in Eq. (4.2) would be upper bounded by $\epsilon'$).

Thanks to the Lipschitz continuity of $x$ with respect to time (Lemma \ref{lem:x_[01]_Lips} again), we can then conclude that there exist a sequence of times $t_{k}'>t_k$ with the property that $t_k'-t_k$ is uniformly bounded from below, and such that for all $t\in [t_k,t_k']$ 
and $\alpha, \beta \in \Delta_{t_k}$ there holds (i) $(x_t(\alpha)-x_t(\beta))^2 \geq \epsilon'/4$ and $x_t(\alpha), x_t(\beta) \in I'\supset I$. This implies that
\begin{equation}\label{eq:diverging_int}
\int_t \int_{\alpha, \beta \in B(I',t)} (x_t(\alpha) - x_t(\beta))^2 \aor{=} \infty.
\end{equation}
Recall, however, that  $w(t,\alpha,\beta,x_t(\alpha),x_t(\beta)) \geq \Delta >0$ when $\abs{x_t(\alpha)-x_t(\beta)}<r$, which is always the case if $\alpha,\beta \in B(I',t)$. The divergence in \eqref{eq:diverging_int} implies thus the divergence of 
$$
\int_t \int_{\alpha, \beta \in B(I',t)} w(t,\alpha,\beta,x_t(\alpha),x_t(\beta))(x_t(\alpha) - x_t(\beta))^2,
$$
in contradiction with Lemma \ref{finiteintegrability}. We conclude that \eqref{eq:lim_square_I} must hold. 

To conclude our proof,  set
\[ z(t) = \frac{1}{{\mathcal L}(B(I,t))} \int_{\aor{\alpha \in}B(I,t)} x_t(\alpha) \] and observe that
\[ \int_{\aor{\alpha \in} B(I,t)} \left( x_t(\alpha) - \ao{z(t)} \right)^2  \leq \min \left( {\mathcal L}(B(I,t)), ~  \frac{1}{2 {\mathcal L}(B(I,t))} \int_{\alpha, \beta \in B(I,t)} (x_t(\alpha) - x_t(\beta))^2 \right) \] where we used  Lemma \ref{varchange}. Performing some elementary manipulations on limits, this implies that 
$\lim_{t \rightarrow \infty} \int_{\alpha,\beta \in B(I,t)} (x_t(\alpha) - z(t))^2 = 0.$
\end{proof}

\smallskip

We now show that $z(t)$ in the previous lemma can actually be taken as constant independent of $t$.

\smallskip

\ao{\begin{definition} We call any point $x$ with $\mu_{\infty}({x})=0$ a {\em continuity point}. Note that if $x \notin [0,1]$ then $x$ is automatically a continuity point. An interval $[a,b]$ is a {\em continuity interval} if both $a$ and $b$ are continuity points. Having established Theorem \ref{weakconv}, we know that for any continuity interval $I$, $\mu_t(I) \rightarrow \mu_{\infty}(I)$.  
\end{definition}}
\bigskip

$ $
\bigskip

\ao{\begin{lemma} \label{lem:z_in_or_out}
There exist a \jmh{finite sequence of points}  $z_1, \ldots, z_k$ in $[0,1]$ \jmh{such that}: \begin{enumerate} \item If $I$ is a closed interval 
not containing any of the points  $z_i$, then $\mu_{\infty}(I)=0$. 
\item If $I$ is a closed interval whose interior contains at least one $z_i$, then $\mu_{\infty}(I)>0$. In fact, there exists some $l>0$ such that $\mu_{\infty}(I) \geq l$ for any such $I$.
\end{enumerate} 
\label{limitpts} 
\end{lemma}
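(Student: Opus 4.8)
The plan is to extract the points $z_1,\dots,z_k$ from a covering argument built on Lemma \ref{pointpicking}. Cover $[0,1]$ by finitely many intervals $I_1,\dots,I_N$ each of length less than $r$ (say length $r/2$, with some overlap). For each $I_j$, Lemma \ref{pointpicking} gives a function $z_j(t)$ with range in $I_j$ such that $\int_{B(I_j,t)}(x_t(\alpha)-z_j(t))^2\,d\alpha\to 0$. I would first argue that the limit set of $\{z_j(t)\}_{t}$ must collapse: suppose $\mu_\infty(I_j)>0$. Then by Theorem \ref{weakconv}, for a continuity subinterval $I_j'\subseteq I_j$ with $\mu_\infty(I_j')>0$, we have $\mathcal L(B(I_j',t))\to \mu_\infty(I_j')>0$, so $\mathcal L(B(I_j,t))$ is bounded away from $0$ for large $t$. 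Combined with $\int_{B(I_j,t)}(x_t(\alpha)-z_j(t))^2\to 0$ and the Lipschitz-in-time continuity of each $x_t(\alpha)$ (Lemma \ref{lem:x_[01]_Lips}), this forces $z_j(t)$ to be Cauchy: if $z_j(t)$ oscillated between two values at distance $\delta$ along two time sequences, a positive mass of agents would have to travel distance $\approx\delta$ back and forth, but over the bounded time it takes them (at speed $\le W$) they cannot, so the "concentration at $z_j(t)$" would fail at intermediate times. Hence $z_j(t)\to z_j^*$ for some $z_j^*\in \overline{I_j}$. Collect the points $z_j^*$ over those $j$ with $\mu_\infty(I_j)>0$; call this finite set $\{z_1,\dots,z_k\}$ after relabeling.

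Next I would establish property 1. Let $I$ be a closed interval containing none of the $z_i$. Shrinking/enlarging slightly we may take $I$ to be a continuity interval (continuity points are dense since $\mu_\infty$ is finite) still avoiding all $z_i$; it suffices to bound $\mu_\infty$ on such intervals. Cover $I$ by finitely many of the $I_j$'s intersected with $I$. For each such piece, either $\mu_\infty(I_j)=0$ (nothing to do), or $z_j^*\in\overline{I_j}$; but since $I$ avoids all $z_i$ and $z_j(t)\to z_j^*$, for large $t$ the point $z_j(t)\notin I$ with distance bounded below by some $\rho>0$. Then the agents in $B(I\cap I_j,t)$ have $x_t(\alpha)\in I$, hence $(x_t(\alpha)-z_j(t))^2\ge \rho^2$, so $\mathcal L(B(I\cap I_j,t))\le \rho^{-2}\int_{B(I_j,t)}(x_t(\alpha)-z_j(t))^2\,d\alpha\to 0$. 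Summing over the finitely many pieces, $\mu_t(I)=\mathcal L(B(I,t))\to 0$, and by Theorem \ref{weakconv} $\mu_\infty(I)=\lim_t\mu_t(I)=0$ (using that $I$ is a continuity interval).

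For property 2, let $I$ be a closed interval whose interior contains some $z_i$, say $z_i=z_j^*$ with $\mu_\infty(I_j)>0$. Since $z_j(t)\to z_i$ and $z_i$ is interior to $I$, for large $t$ a fixed-radius neighborhood $N$ of $z_i$ lies in $I$, and $z_j(t)\in N$. Then $\mathcal L(B(I,t))\ge \mathcal L(B(I_j,t)\cap\{x_t(\alpha)\in N\})$; since $\mathcal L(B(I_j,t))$ is bounded below (as shown above, using $\mu_\infty(I_j)>0$) while the mass of $B(I_j,t)$ outside $N$ is $\le (\mathrm{dist}(N^c,z_j(t)))^{-2}\int_{B(I_j,t)}(x_t(\alpha)-z_j(t))^2\to 0$, we get $\mathcal L(B(I,t))$ bounded below by some $l'>0$ for large $t$, and hence $\mu_\infty(I)\ge \liminf_t \mu_t(I)\ge l'$ via Portmanteau (the $\liminf$ inequality for open sets / a continuity-interval approximation). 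Taking $l=\min$ over the finitely many $z_i$ of the corresponding $l'$ gives the uniform bound. The main obstacle is the collapse step — showing $z_j(t)$ converges rather than merely oscillating within $I_j$ — which is exactly where the Lipschitz-in-time bound of Lemma \ref{lem:x_[01]_Lips} is essential: a positive mass concentrated near $z_j(t)$ cannot be relocated a bounded distance in bounded time, so two distinct subsequential limits of $z_j(t)$ are incompatible with $\int_{B(I_j,t)}(x_t(\alpha)-z_j(t))^2\to 0$ holding for all large $t$.
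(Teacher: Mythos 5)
Your overall architecture (cover $[0,1]$ by intervals of length $<r$, apply Lemma \ref{pointpicking} to each, extract limits $z_j^*$ of the concentration points, then prove properties 1 and 2 by the $\rho^{-2}$-Chebyshev bound and by trapping a fixed fraction of $\mu_\infty(I_j)$ near $z_j^*$) is essentially the paper's proof, and those parts are sound. The problem is the collapse step, which you yourself single out as the crux. Your stated mechanism --- ``a positive mass concentrated near $z_j(t)$ cannot be relocated a bounded distance in bounded time,'' with the Lipschitz bound of Lemma \ref{lem:x_[01]_Lips} declared ``essential'' --- is false. A mass moving at speed at most $W$ can certainly traverse a distance $\delta$ in time $\delta/W$, and since infinite time is available it can do so infinitely often; indeed this is exactly what happens in the construction proving Theorem \ref{counterexample}, where a positive mass of agents shuttles between $-1/2$ and $1/2$ forever. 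Nor does oscillation of $z_j(t)$ contradict $\int_{B(I_j,t)}(x_t(\alpha)-z_j(t))^2\to 0$ ``at intermediate times'': $z_j(t)$ is (essentially) the running mean of the blob, so if the blob migrates coherently from $a$ to $b$ the concentration around $z_j(t)$ is preserved throughout the transit. The Lipschitz bound and the concentration integral alone therefore cannot force $z_j(t)$ to be Cauchy.

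The correct obstruction --- and the one the paper uses --- is convergence in distribution (Theorem \ref{weakconv}). If $\mu_\infty(I_j)>0$ and $z_j(t)$ had two subsequential limits $a<b$ with $b-a=\delta$, pick $\epsilon<\delta/4$ small and a continuity interval $K$ containing $[a-\epsilon,a+\epsilon]\cap I_j$ and disjoint from $[b-\epsilon,b+\epsilon]$. The concentration result gives a sequence of times along which $\mu_t(K)\geq (1-\epsilon)\mu_\infty(I_j)+o(1)$ and another along which $\mu_t(K)\leq \epsilon\,\mu_\infty(I_j)+o(1)$, so $\mu_t(K)$ fails to converge, contradicting $\mu_t(K)\to\mu_\infty(K)$. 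You already have all the ingredients for this (you invoke Theorem \ref{weakconv} to bound $\mathcal{L}(B(I_j,t))$ from below), so the fix is local: replace the kinematic argument by the distributional one. One further minor point: you should arrange your covering intervals $I_j$ to be continuity intervals (perturbing endpoints, as only countably many points fail to be continuity points), since in property 1 you implicitly need $\mu_t(I_j\cap I)\to 0$ for the pieces with $\mu_\infty(I_j)=0$, and in general $\mu_\infty(A)=0$ does not by itself give $\mu_t(A)\to 0$.
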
}

\smallskip

\begin{proof} 
Let $m$ be an integer larger than $1/r$, and let us cover the interval $[0,1]$ with $m$ successive continuity intervals $J_1, \ldots, J_m$ of length strictly less than $r$ (note that the starting point of the first interval can be below zero and the endpoint of the last interval can be above one). 
Since a measure can have at most countably many points which are not continuity points, such a partition always exists. It can for example be obtained by starting from a partition in $m$ intervals of equal length slightly perturbing the endpoints that would not be continuity points.

Applying Lemma \ref{pointpicking}, get the existence of $m$ \ao{functions} $z_t(i)$ such that for each $l=1, \ldots, m$,  \begin{equation} \label{jconc} \lim_{t \rightarrow \infty} \int_{\aor{\alpha \in} B(J_i,t)} (x_t(\alpha) - z_t(i))^2 = 0 \end{equation} Note that by definition ${\mathcal L}(B(J_i,t)) = \mu_t(J_i)$. We have, therefore, the following concentration result implied by \ao{Eq. (\ref{jconc})}: either $\mu_t(J_i)$ approaches $0$ as $t \rightarrow \infty$ or, for any $\epsilon>0$ there is a time after which the measure of the set of agents with values in $J_i \cap [z_i(t)-\epsilon, z_i(t) + \epsilon]$ is at least $(1-\epsilon) \mu_{\infty}(J_i)$. 

We next claim that if $\mu_{\infty}(J_i) > 0$ then $z_t(i)$ converges as $t \rightarrow \infty$. Indeed, if some such $z_t(i)$ had two distinct limit points, then the concentration result of the previous paragraph would immediately contradict the convergence of the measures associated to $x_t$ established in Theorem \ref{weakconv}. Finally, we pick $z_1, \ldots, z_k$ to be the limits of those $z_t(i)$ with $\mu_{\infty}(J_i)>0$. 

\ao{Having defined the points $z_i$, we now turn to the statement of the lemma. Indeed, consider some interval $I$. There are two possibilities considered by this lemma.}

\ao{If the interior of $I$ contains some $z_i$, we can choose $\epsilon$ small enough so that $[z_i - \epsilon, z_i + \epsilon]$ lies strictly inside $I$. We can perturb the endpoints of $I$ to bring them closer to each other to get the interval $I'$ which is a continuity interval still containing $[z_i - \epsilon, z_i + \epsilon]$. As we argued above, eventually there is always a $(1-\epsilon) \mu_{\infty}(J_i)$ mass of agents in a subinterval of $[z_i - \epsilon, z_i + \epsilon]$, where $\mu_{\infty}(J_i)$ is strictly positive.
This implies that eventually $\mu_t(I')$ is positive and bounded from below. Moreover, since $I'$ is a continuity interval, $\mu_t(I')$ converges to $\mu_\infty(I')$, so that $\mu_{\infty}(I')>0$. Moreover, we can further lower bound $\mu_{\infty}(I') \geq l$ where $l$ is the smallest positive $\mu_{\infty}(J_i)$. Since $I' \subset I$, these claims hold for $I$ as well.}

\ao{Suppose now that $I$ does not contain any $z_i$. We may write $I$ as a disjoint union $I = \bigcup_{i=1}^m J_i \cap I.$ Consider each intersection in this union. Either $\mu_{\infty}(J_i)$ has measure zero, in which case so does
$\mu_{\infty}(J_i \cap I)$; or $\mu_{\infty}(J_i) > 0$. In the latter case, the interval $J_i$ will contain an element from the set $\{z_j\}$; say $J_i$ contains $z_i$. We can take a continuity interval $[z_i - \epsilon, z_i + \epsilon]$ of small enough but positive length so that $[z_i - \epsilon, z_i + \epsilon]$ does not intersect with $J_i \cap I$. Moreover, in this case $J_i \setminus ( J_i \cap [z_i - \epsilon, z_i + \epsilon]) $ is either a continuity interval or the union of two continuity intervals, because the endpoints of $J_i$ as well as $z_i - \epsilon, z_i + \epsilon$ are continuity
points by construction. It follows then from Theorem \ref{weakconv} that $\mu_t(J_i \setminus ( J_i \cap [z_i - \epsilon, z_i + \epsilon])$, which is no greater than  $\epsilon \mu_{\infty}(J_i)$ for sufficiently large $t$, converges to  $\mu_\infty(J_i \setminus ( J_i \cap [z_i - \epsilon, z_i + \epsilon])$, which is thus smaller than or equal to $\epsilon \mu_{\infty}(J_i)$. \aorev{The same is therefore true of $\mu_{\infty}(J_i \cap I)$}.
Since $\epsilon$ 
could be chosen arbitrarily small, we obtain that $\mu_{\infty}(J_i \cap I)=0$ in this case as well. Finally, $\mu_{\infty}(I)=0$.}
\end{proof} 

\smallskip

\ao{The previous lemma allows us to explicitly characterize the set of continuity points of $\mu_{\infty}$.}

\smallskip

\ao{\begin{corollary} The set $[0,1] \setminus \{z_i\}$ is the set of continuity points of the measure $\mu_{\infty}$. \label{conti}
\end{corollary}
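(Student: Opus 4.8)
The plan is to check directly that, for a point $x\in\R$, one has $\mu_\infty(\{x\})=0$ precisely when $x\notin\{z_1,\dots,z_k\}$, using only Lemma \ref{lem:z_in_or_out} together with the elementary fact that $\mu_\infty$ is a finite measure supported on $[0,1]$ — finiteness being what legitimizes continuity of the measure along a shrinking sequence of intervals. (That $\mu_\infty$ is finite, indeed a probability measure on $[0,1]$, follows because by Lemma \ref{lem:x_[01]_Lips} every $\mu_t$ is a probability measure on the compact set $[0,1]$, so its weak-$*$ limit is as well; this is the only background point that needs to be recorded.)

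First I would show that no $z_i$ is a continuity point. Fixing $z_i$ and, for $\epsilon>0$, setting $I_\epsilon=[z_i-\epsilon,z_i+\epsilon]$, whose interior contains $z_i$, Lemma \ref{lem:z_in_or_out}(2) supplies a constant $l>0$ independent of $\epsilon$ with $\mu_\infty(I_\epsilon)\geq l$. Since $\bigcap_{\epsilon>0}I_\epsilon=\{z_i\}$ and $\mu_\infty$ is finite, continuity of $\mu_\infty$ from above gives $\mu_\infty(\{z_i\})=\lim_{\epsilon\downarrow 0}\mu_\infty(I_\epsilon)\geq l>0$, so $z_i\notin$ (the set of continuity points). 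Conversely, I would show every $x\in\R\setminus\{z_1,\dots,z_k\}$ is a continuity point: if $x\notin[0,1]$ this is immediate since $\mu_\infty$ is supported on $[0,1]$ (as already noted after the definition of continuity point); and if $x\in[0,1]\setminus\{z_1,\dots,z_k\}$, then because there are only finitely many $z_i$ we may choose $\epsilon>0$ small enough that the closed interval $[x-\epsilon,x+\epsilon]$ contains none of the $z_i$, whence Lemma \ref{lem:z_in_or_out}(1) yields $\mu_\infty([x-\epsilon,x+\epsilon])=0$ and in particular $\mu_\infty(\{x\})=0$.

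Combining the two directions identifies the set of continuity points of $\mu_\infty$ with $[0,1]\setminus\{z_1,\dots,z_k\}$, under the paper's convention that "continuity point" refers to points of $[0,1]$ (points outside $[0,1]$ being trivially continuity points). I do not anticipate any genuine difficulty: the argument is a short combination of the finiteness of $\mu_\infty$ and the dichotomy already established in Lemma \ref{lem:z_in_or_out}, and the only step deserving explicit mention is the justification that $\mu_\infty$ is finite so that continuity from above applies in the first part.
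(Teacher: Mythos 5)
Your proposal is correct and follows essentially the same route as the paper's proof: for $x\notin\{z_i\}$ an interval around $x$ avoiding all $z_i$ has $\mu_\infty$-measure zero by Lemma \ref{lem:z_in_or_out}(1), while for each $z_i$ a shrinking sequence of intervals with $\mu_\infty$-measure bounded below by $l>0$ forces $\mu_\infty(\{z_i\})>0$. Your version merely makes explicit the finiteness of $\mu_\infty$ and the continuity-from-above step, which the paper leaves implicit.
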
}

\smallskip

\ao{\begin{proof} Any point $x \notin \{z_j\}$ has the property that we can take an interval $I$ containing it with $\mu_{\infty}(I)=0$ by Lemma \ref{lem:z_in_or_out} so that $x$ is a continuity point of $\mu_{\infty}$. Conversely, for each $z_i \in J_i$, take a decreasing sequence of intervals $I_k$ containing it whose
length approaches zero. For each such interval, by Lemma \ref{lem:z_in_or_out}  we have $\mu_{\infty}(I_k) \geq l$ for some $l>0$, and consequently $\mu_{\infty}(\{z\}) > 0$. 
\end{proof}}

\smallskip

\ao{It is not too hard to see that the points $z_i$ cannot be too close together; this is formally stated in the following lemma.} 

\smallskip

\ao{\begin{lemma} For all $i,j$, $|z_i - z_j| \geq r$. 
\end{lemma}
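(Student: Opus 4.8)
The plan is to argue by contradiction. Suppose $z_i \neq z_j$ but $|z_i - z_j| < r$. Then one can find an interval $I$ of length strictly less than $r$ whose interior contains both $z_i$ and $z_j$ (and, shrinking slightly, whose endpoints are continuity points of $\mu_\infty$, so $I$ is a continuity interval). By Lemma~\ref{pointpicking} applied to this interval $I$, there is a function $z(t)$ with range in $I$ such that $\lim_{t\to\infty}\int_{\alpha\in B(I,t)}(x_t(\alpha)-z(t))^2\,d\alpha = 0$. This says that asymptotically \emph{all} the mass of agents whose opinion lies in $I$ concentrates near the single moving point $z(t)$.

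The contradiction then comes from the fact that both $z_i$ and $z_j$ attract a positive, bounded-below mass. Concretely, by Lemma~\ref{lem:z_in_or_out}(2), any closed interval whose interior contains $z_i$ has $\mu_\infty$-measure at least $l>0$, and similarly for $z_j$; pick disjoint small continuity intervals $I_i\ni z_i$ and $I_j\ni z_j$ both contained in $I$, each carrying asymptotic mass $\geq l$. Using Theorem~\ref{weakconv} (convergence in distribution on continuity intervals), for all large $t$ there is a mass $\geq l/2$ of agents with $x_t(\alpha)\in I_i$ and, disjointly, a mass $\geq l/2$ with $x_t(\alpha)\in I_j$. Since $I_i$ and $I_j$ are separated by some fixed distance $d>0$, at least one of these two sets of agents is at distance $\geq d/2$ from $z(t)$ (because $z(t)$ can be close to at most one of the two intervals). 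That set has measure $\geq l/2$, so $\int_{\alpha\in B(I,t)}(x_t(\alpha)-z(t))^2\,d\alpha \geq (l/2)(d/2)^2 > 0$ for all large $t$, contradicting the limit being zero. Hence no two distinct $z_i,z_j$ can be within $r$ of each other.

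The main obstacle is bookkeeping rather than depth: one must choose the enclosing interval $I$ of length $<r$ with continuity-point endpoints, nest the small intervals $I_i, I_j$ inside it correctly, and carefully invoke the ``$(1-\epsilon)\mu_\infty(J)$ mass concentrates'' statement from the proof of Lemma~\ref{lem:z_in_or_out} (or re-derive the needed lower bounds directly from Theorem~\ref{weakconv} on continuity intervals) to get the two disjoint masses bounded below. Everything else — the observation that $z(t)$ cannot simultaneously be near two well-separated intervals, and the resulting lower bound on the integral — is a short quantitative estimate. Note also the degenerate case $z_i = z_j$ is vacuous, and the case where one of the points is at the boundary of $[0,1]$ is handled the same way since we only need the enclosing interval $I$ (possibly with one endpoint outside $[0,1]$, which is automatically a continuity point).
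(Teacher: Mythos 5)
Your proof is correct, but it takes a different route from the paper's. The paper argues directly from the finite energy integral of Lemma~\ref{finiteintegrability}: it places small intervals $I_i\ni z_i$ and $I_j\ni z_j$ so that every pair of points, one from each, is at distance in $[|z_i-z_j|/2,\,r]$, notes that for all large $t$ each interval carries mass at least $l/2$ (by Lemma~\ref{limitpts} and Corollary~\ref{conti}), and concludes that the integrand $\int w\,(x_t(\beta)-x_t(\alpha))^2$ is bounded below by the constant $\Delta(l/2)^2(|z_i-z_j|/2)^2$ for all $t\geq t'$ --- so its time integral diverges, a contradiction. You instead enclose both points in a single interval $I$ of length $<r$ and invoke the concentration conclusion of Lemma~\ref{pointpicking}, then show that two persistent, well-separated masses of size $\geq l/2$ inside $I$ are incompatible with concentration around a single moving point $z(t)$; your observation that $z(t)$ cannot be within $d/2$ of both intervals is the right quantitative step. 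Since Lemma~\ref{pointpicking} is itself derived from Lemma~\ref{finiteintegrability} together with the $\Gamma(r,\Delta)$ hypothesis, the two arguments rest on the same underlying mechanism; yours reuses the already-proven concentration lemma (slightly shorter at this point in the development, and it never needs to mention $\Delta$ explicitly), while the paper's is more self-contained and gives an explicit divergence rate. Both are valid; the bookkeeping you flag (continuity endpoints for $I_i,I_j$, nesting inside $I$, the mass lower bounds from Theorem~\ref{weakconv}) is exactly what the paper also does, so no gap there.
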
}

\smallskip

\ao{\begin{proof} If $0<|z_i - z_j| < r$ for some $i$,$j$, then we can take a small enough interval $I_i$ containing $z_i$ and a small enough interval
$I_j$ containing $z_j$ such the distance between any point in $I_i$ and any point in $I_j$ is in $[|z_i - z_j|/2, r]$. By Lemma \ref{limitpts} and Corollary \ref{conti}, there exists a time $t'$ such that for all $t \geq t'$ we have $\mu_t(I_i) \geq l/2, \mu_t(I_j) \geq l/2$. This means that 
$$\int_{\aor{\alpha, \beta \in} [0,1]^2} w(t, \alpha, \beta, x_t(\alpha), x_t(\beta)) (x_t(\beta) - x_t(\alpha))^2 \geq \Delta \left( \frac{l}{2} \right)^2 \left(\frac{|z_i - z_j|}{2} \right)^2$$ for all $t \geq t'$, which contradicts
Lemma \ref{finiteintegrability}.
\end{proof} }

\smallskip

\ao{The proof of Theorem \ref{densityconvergence} will require us to apply Lemma \ref{limitpts} to various intervals. It will be convenient to have the following definition, which 
implicitly defines a set of intervals around the points $z_i$.} 

\smallskip

\ao{\begin{definition} A positive real number $\epsilon$ will be called {\em feasible} if \begin{enumerate} \item  $\epsilon<r/2$.  \item The intervals $[z_i - \epsilon, z_i + \epsilon]$ do not intersect. \item For all $i$,  $z_i + \epsilon$ is 
at most $2$ while $z_i - \epsilon$ is at least $-1$.
\end{enumerate}
\end{definition}}

\smallskip

\ao{Clearly, all small enough $\epsilon$ are feasible. }

\smallskip

\ao{\begin{definition} Given a feasible $\epsilon$, we define 
 $I_{\epsilon} = [-1,2] \setminus \bigcup_i [z_i - \epsilon, z_i + \epsilon]$. Furthermore, we define ${\cal B}_{\epsilon}$ be the set of 
agents $\beta$ such that $x_t(\beta) \in I_{\epsilon}$ for all $t$ large enough. 
\end{definition}}

\smallskip

\ao{\begin{corollary}  For any feasible $\epsilon$, ${\cal L}({\cal B}_{\epsilon})=0$. \label{bmeasure}
\end{corollary}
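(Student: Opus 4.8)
\textbf{Proof proposal for Corollary \ref{bmeasure}.}

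The plan is to show that if $\mathcal{L}(\mathcal{B}_\epsilon) > 0$ then the time integral in Lemma \ref{finiteintegrability} diverges, a contradiction. The point of the definition of feasibility (and in particular $\epsilon < r/2$) is that $I_\epsilon$ is a finite disjoint union of intervals, each of length less than $r$, together with the two ``outer'' pieces $[-1, z_{\min}-\epsilon]$ and $[z_{\max}+\epsilon, 2]$; and actually by feasibility condition 3 these outer pieces don't matter because no point of $\mathcal{B}_\epsilon$ can live there forever (opinions eventually stay in $[0,1]$ by Lemma \ref{lem:x_[01]_Lips}). So I would first partition $I_\epsilon \cap [0,1]$ into finitely many continuity subintervals $K_1,\dots,K_p$, each of length less than $r$, by the same perturbation-of-endpoints trick used in Lemma \ref{lem:z_in_or_out}.

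Next, suppose $\mathcal{L}(\mathcal{B}_\epsilon) = c > 0$. By pigeonhole, there is some fixed index $j$ and a subset $\mathcal{B}' \subseteq \mathcal{B}_\epsilon$ with $\mathcal{L}(\mathcal{B}') > 0$ such that, along some sequence of times $t_n \to \infty$, every $\beta \in \mathcal{B}'$ has $x_{t_n}(\beta) \in K_j$ — this requires a small measure-theoretic argument, since membership in a given $K_j$ can oscillate in $t$, but at each time $t$ (large enough) $\mathcal{B}_\epsilon$ splits among the finitely many $K_j$'s, so for each $t$ one of them carries mass at least $c/p$, and then one can extract a single $j$ good for infinitely many times. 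Apply Lemma \ref{pointpicking} to the interval $K_j$: there is a function $z(t)$ with range in $K_j$ such that $\int_{B(K_j,t)} (x_t(\alpha)-z(t))^2\,d\alpha \to 0$. Since $K_j \subseteq I_\epsilon$ stays at distance at least $\epsilon$ (actually at least something like $\epsilon$, using feasibility) from every $z_i$, and since by Lemma \ref{lem:z_in_or_out}/Corollary \ref{conti} all the limiting mass $\mu_\infty$ sits on $\{z_i\}$, the total mass $\mu_t(K_j) \to \mu_\infty(K_j) = 0$. But the concentration statement from Lemma \ref{pointpicking} then forces essentially all of $B(K_j,t)$ to be crammed near $z(t)$, while $\mathcal{L}(B(K_j,t)) = \mu_t(K_j) \to 0$; this contradicts the fact that $\mathcal{B}'$, a positive-measure set, has $x_{t_n}(\beta) \in K_j$ for all $\beta \in \mathcal{B}'$ and hence $\mathcal{L}(B(K_j,t_n)) \geq \mathcal{L}(\mathcal{B}') > 0$ for all $n$.

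The main obstacle I expect is the bookkeeping in the pigeonhole step: $\mathcal{B}_\epsilon$ is defined by a condition holding ``for all $t$ large enough,'' but the specific interval $K_j$ containing a given agent changes with $t$, so one cannot simply say a positive-measure chunk of $\mathcal{B}_\epsilon$ sits in one $K_j$ for all large $t$. The clean way around this is to observe that it suffices to find one $j$ and one \emph{sequence} $t_n \to \infty$ with $\mathcal{L}(\{\beta \in \mathcal{B}_\epsilon : x_{t_n}(\beta) \in K_j\}) \geq c/p$ for all $n$ (possible since $\{K_j\}_{j=1}^p$ covers the relevant part of $I_\epsilon$ and there are finitely many $j$), because Lemma \ref{finiteintegrability}'s integrand is nonnegative and Lemma \ref{pointpicking}'s conclusion is a genuine limit in $t$, so evaluating along $t_n$ already yields the contradiction. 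Once that is set up, the rest is a direct combination of Lemma \ref{pointpicking}, Lemma \ref{lem:z_in_or_out}, Corollary \ref{conti}, and the observation $\mathcal{L}(B(I,t)) = \mu_t(I)$. Alternatively — and perhaps more simply — one can avoid Lemma \ref{pointpicking} entirely: if $\mathcal{L}(\mathcal{B}_\epsilon)=c>0$, then for all large $t$ the set $\{\beta \in \mathcal{B}_\epsilon : x_t(\beta) \in K_j\}$ has measure $\geq c/p$ for some $j = j(t)$, and among the pairs $(\alpha,\beta)$ in that set, since $K_j$ has positive length $\geq$ some $\ell_0 > 0$ and mass $\mu_t(K_j)$ of it is concentrated near a single point while a fixed mass $c/p$ is not, Lemma \ref{varchange} plus a spread argument gives $\int_{B(K_j,t)^2}(x_t(\alpha)-x_t(\beta))^2 \geq \delta_0 > 0$ for all large $t$; multiplying by $\Delta$ (valid since diameter of $K_j$ is $<r$) and integrating in $t$ contradicts Lemma \ref{finiteintegrability}. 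Either route works; I would present the first, as it reuses Lemma \ref{pointpicking} cleanly.
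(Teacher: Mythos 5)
Your argument is correct in substance, but most of the machinery you invoke is doing no work, and the paper's own proof is a three-line observation. The contradiction you actually reach in your preferred route is ``$\mu_{t_n}(K_j)\to\mu_\infty(K_j)=0$ (by Lemma \ref{limitpts}, Corollary \ref{conti} and Theorem \ref{weakconv}) versus $\mu_{t_n}(K_j)$ bounded below by a positive constant (by pigeonhole)''; neither Lemma \ref{pointpicking} nor Lemma \ref{finiteintegrability} is needed for that, and the decomposition into subintervals $K_j$ of length less than $r$ is also unnecessary, since Lemma \ref{limitpts} applies to every component of $I_\epsilon$ and the endpoints $z_i\pm\epsilon$ are continuity points (feasibility keeps them off $\{z_j\}$), so one gets $\mu_t(I_\epsilon)\to 0$ directly. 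The paper exploits exactly this: it sets ${\cal B}_\epsilon(t_k)=\{\alpha:\ x_t(\alpha)\in I_\epsilon \ \forall t\geq t_k\}$, notes that these sets increase to ${\cal B}_\epsilon$ (which also settles the measurability of ${\cal B}_\epsilon$, a point you leave implicit), so that ${\cal L}({\cal B}_\epsilon(t_k))\to{\cal L}({\cal B}_\epsilon)$, and then bounds ${\cal L}({\cal B}_\epsilon(t_k))\leq\mu_{t_k}(I_\epsilon)\to 0$. This monotone-union device is also the clean resolution of the bookkeeping issue you correctly identify: pigeonhole cannot produce a single positive-measure set ${\cal B}'$ landing in one $K_j$ at every $t_n$ simultaneously (the sets $S_n$ it produces may have null intersection), and moreover at a fixed time $t_n$ only the agents for which $t_n$ is already ``large enough'' are guaranteed to lie in $I_\epsilon$, so the mass you should pigeonhole is ${\cal L}({\cal B}_\epsilon(t_0))\geq c/2$ for a fixed large $t_0$, not ${\cal L}({\cal B}_\epsilon)$ itself. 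Your fallback---requiring only a measure lower bound at each $t_n$ separately---repairs the first point, and with the second repair your proof goes through; it is simply a longer path to the same two facts (Lemma \ref{limitpts} plus weak convergence) that the paper combines directly.
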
}

\smallskip

\ao{\begin{proof} Pick any \aor{increasing} sequence $t_k$ approaching positive infinity and let ${\cal B}_{\epsilon}(t_k)$ be the set of agents whose values lie in $I_{\epsilon}$ for all $t \geq t_k$.Then \aor{the sequence} ${\cal B}_{\epsilon}(t_k), \aor{k=1,2, \ldots}$ is a nondecreasing set sequence \aor{(due to the fact that $t_k$ is increasing)} that approaches ${\cal B}_{\epsilon}$, and so ${\cal B}_{\epsilon}$ is measurable and  ${\cal L}( {\cal B}_{\epsilon}(t_k )) \rightarrow {\cal L}({\cal B}_{\epsilon})$.  However ${\cal L}({\cal B}_{\epsilon}(t_k)) \leq \mu_{t_k}(I_{\epsilon})$ by definition of $\mu_t$ and $\mu_{t_k}(I_{\epsilon}) \rightarrow 0$ by Lemma \ref{limitpts} and Theorem \ref{weakconv}. Consequently, ${\cal L}({\cal B}_{\epsilon}) = 0$.  
\end{proof}}
\smallskip

\ao{The last corollary is a key step towards the proof of Theorem \ref{densityconvergence}: it states that agents which stay bounded
away from any $z_i$ after a time form a measure zero set. However, to prove Theorem \ref{densityconvergence} we will need something additional to rule out
the possibility that a positive measure set of agents have multiple limit points which include $z_i$.

To that end, we argue that only a set of measure zero of agents cross any small intervals close to a point $z_i$ infinitely often. This is formalized
in the next series of definitions and lemmas.}

\smallskip

\begin{lemma} \ao{Let $I$ be a closed interval contained in some $(z_i, z_i + \epsilon)$ or $(z_i - \epsilon, z_i)$ for some feasible $\epsilon$.} \jmh{Then} $\int_{0}^{\infty} \mu_t(I) < \infty$.
\end{lemma}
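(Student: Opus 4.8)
The plan is to exploit Lemma~\ref{finiteintegrability} together with the lower bound on $w$ provided by the assumption $w \in \Gamma(r,\Delta)$. Fix a feasible $\epsilon$ and a closed interval $I \subseteq (z_i, z_i+\epsilon)$ (the case $I \subseteq (z_i-\epsilon,z_i)$ being symmetric). Write $I = [a,b]$ with $z_i < a \le b < z_i + \epsilon$. The idea is that at any time $t$, an agent whose value lies in $I$ interacts with strength at least $\Delta$ with the entire mass of agents concentrated near $z_i$, and those agents are at a distance bounded away from zero (roughly $\mathrm{dist}(z_i, I) \ge a - z_i > 0$), so each such pair contributes a positive amount to the double integral in Lemma~\ref{finiteintegrability}. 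Summing over the $\mu_t(I) \cdot (\text{mass near } z_i)$ such pairs gives a lower bound on the integrand of Lemma~\ref{finiteintegrability} that is a positive constant times $\mu_t(I)$, and finiteness of that integral then forces $\int_0^\infty \mu_t(I)\,dt < \infty$.

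**Key steps, in order.** First, choose a feasible $\epsilon' < \epsilon$ small enough that the continuity interval $[z_i - \epsilon', z_i + \epsilon']$ is disjoint from $I$; by Lemma~\ref{limitpts} (part 2) and Corollary~\ref{conti}, $\mu_\infty([z_i - \epsilon', z_i+\epsilon']) \ge l > 0$, and since this is a continuity interval, Theorem~\ref{weakconv} gives a time $t_0$ such that $\mu_t([z_i-\epsilon',z_i+\epsilon']) \ge l/2$ for all $t \ge t_0$. Second, note that for $t \ge t_0$, any $\alpha \in B(I,t)$ and any $\beta$ with $x_t(\beta) \in [z_i - \epsilon', z_i+\epsilon']$ satisfy $|x_t(\alpha) - x_t(\beta)| \le$ (length of $[z_i-\epsilon', z_i+\epsilon] $) $< r$ by feasibility ($\epsilon, \epsilon' < r/2$), hence $w(t,\alpha,\beta,x_t(\alpha),x_t(\beta)) \ge \Delta$; and moreover $|x_t(\alpha)-x_t(\beta)| \ge a - (z_i+\epsilon') =: d > 0$ once $\epsilon'$ is chosen small enough. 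Third, restricting the double integral in Lemma~\ref{finiteintegrability} to $\alpha \in B(I,t)$ and $\beta$ with $x_t(\beta) \in [z_i-\epsilon',z_i+\epsilon']$ yields, for all $t \ge t_0$,
\[
\int_{[0,1]^2} w(t,\alpha,\beta,x_t(\alpha),x_t(\beta))(x_t(\beta)-x_t(\alpha))^2\,d\alpha\,d\beta \;\ge\; \Delta\, d^2\, \mu_t(I)\, \frac{l}{2}.
\]
Fourth, integrate this inequality over $t \in [t_0,\infty)$; Lemma~\ref{finiteintegrability} bounds the left-hand side's integral, so $\frac{\Delta d^2 l}{2}\int_{t_0}^\infty \mu_t(I)\,dt < \infty$, and since $\mu_t(I) \le 1$ for all $t$, also $\int_0^{t_0}\mu_t(I)\,dt < \infty$, giving the claim.

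**Main obstacle.** The only genuinely delicate point is making sure the interval $[z_i-\epsilon',z_i+\epsilon']$ used to capture the mass near $z_i$ can simultaneously be taken to be a continuity interval (so Theorem~\ref{weakconv} applies and the $l/2$ lower bound on $\mu_t$ holds for large $t$), to be disjoint from $I$ (so the distance $d$ is strictly positive), and to be small enough that the union $[z_i - \epsilon', z_i+\epsilon] \cup I$ has diameter $< r$ (so the $\Gamma(r,\Delta)$ bound $w \ge \Delta$ kicks in). All three are satisfiable at once because $I$ is a \emph{closed} interval strictly inside $(z_i, z_i+\epsilon)$, so $a - z_i > 0$; we just shrink $\epsilon'$ below $\min\{a - z_i, \, r - \epsilon\}$ and perturb its endpoints to continuity points (possible since only countably many points fail to be continuity points). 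After that the argument is a one-line application of Lemma~\ref{finiteintegrability}; there is no further subtlety.
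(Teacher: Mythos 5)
Your proof is correct and follows essentially the same route as the paper: both restrict the double integral of Lemma \ref{finiteintegrability} to pairs consisting of an agent in $B(I,t)$ and an agent in a small continuity interval around $z_i$ (guaranteed to carry mass at least $l/2$ for large $t$ by Lemma \ref{limitpts}, Corollary \ref{conti} and Theorem \ref{weakconv}), use the $\Gamma(r,\Delta)$ hypothesis plus the positive gap between that interval and $I$ to lower-bound the integrand by a constant times $\mu_t(I)$, and integrate in $t$. The only cosmetic difference is that the paper takes an asymmetric interval $I'\subseteq(z_i-\epsilon,\,a-\delta/2)$ around $z_i$ where you take a symmetric one; this changes nothing of substance.
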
 

\smallskip

\begin{proof} \ao{Without loss of generality, suppose $I=[a,b]$ is contained in $(z_i, z_i + \epsilon)$; the other case proceeds similarly. Now by} Lemma \ref{finiteintegrability} \ao{we have that}
\begin{equation} \int_0^{\infty} \int_{\aor{\alpha, \beta \in} [0,1]^2} w(t, \alpha, \beta, x_t(\alpha), x_t(\beta)) (x_t(\beta) - x_t(\alpha))^2  ~dt < \infty 
\label{eq:restate_finite_integrability}
\end{equation} 
\ao{Let $\delta = a - z_i$ and pick $I'$ to be any interval contained within $(z_i - \epsilon, a-\delta/2)$ which contains $z_i$ and  let $q_i = \mu_{\infty}(I')$. By Lemma \ref{limitpts}, we have that $q_i>0$ and by Corollary \ref{conti}, $I'$ is a continuity interval. Therefore, for large enough $t$, we will have 
\[ \int_{\aor{\alpha, \beta \in} [0,1]^2} w(t, \alpha, \beta, x_t(\alpha), x_t(\beta)) (x_t(\beta) - x_t(\alpha))^2  \geq \Delta \left( \frac{\delta}{2} \right)^2  \left( \frac{q_i}{2} \right) \mu_t(I) \]}
which immediately implies \jmh{that the statement of the current lemma holds true, as \eqref{eq:restate_finite_integrability} could otherwise not be satisfied}.
\end{proof} 

\smallskip

\ao{\begin{definition} Given an interval $I$ and $\alpha \in [0,1]$, define $T(\alpha,I)$ to be the set of times $t$ such that $x_t(\alpha) \in I$. Moreover, define ${\mathcal A}(I)$ to be the set of all $\alpha$ such that ${\mathcal L}(T(\alpha,I)) = \infty$, i.e. those spending an infinite amount of time in the interval $I$. Because $x_t(\alpha)$ is continuous in $t$ for fixed $\alpha$ and measurable in $\alpha$ for fixed $t$, it is jointly Borel measurable in
$t$ and $\alpha$, and so these sets are measurable. \label{mrk}
\end{definition}}

\smallskip

\begin{lemma} \ao{Let $I$ be a closed  interval  contained in some $(z_i, z_i + \epsilon)$ or $(z_i - \epsilon, z_i)$ for some feasible $\epsilon$.} Then ${\mathcal A}(I)$ has Lebesgue measure $0$. \label{intervallemma} 
\end{lemma}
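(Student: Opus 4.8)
The plan is to derive a contradiction from assuming that $\mathcal{A}(I)$ has positive Lebesgue measure, by showing that such a set would force the integral $\int_0^\infty \mu_t(I)\,dt$ to be infinite, contradicting the previous lemma. The point is that an agent $\alpha \in \mathcal{A}(I)$ spends, by definition, an infinite amount of time with $x_t(\alpha) \in I$, so $\int_0^\infty \mathbf{1}_{x_t(\alpha) \in I}\,dt = \infty$ for each such $\alpha$. If $\mathcal{A}(I)$ had positive measure, then integrating over $\alpha \in \mathcal{A}(I)$ and swapping the order of integration (Tonelli, using the joint measurability noted in Definition \ref{mrk}) would give
\[
\int_0^\infty \mu_t(I)\,dt \geq \int_0^\infty \mathcal{L}\big(\{\alpha \in \mathcal{A}(I) : x_t(\alpha) \in I\}\big)\,dt = \int_{\mathcal{A}(I)} \mathcal{L}(T(\alpha,I))\,d\alpha = \infty,
\]
contradicting the preceding lemma that $\int_0^\infty \mu_t(I) < \infty$.

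So the main steps, in order, are: first, recall that the preceding lemma gives $\int_0^\infty \mu_t(I)\,dt < \infty$ for our interval $I$. Second, observe $\mu_t(I) = \mathcal{L}(\{\alpha \in [0,1] : x_t(\alpha)\in I\}) \geq \mathcal{L}(\{\alpha \in \mathcal{A}(I) : x_t(\alpha) \in I\})$, and that the map $(t,\alpha) \mapsto \mathbf{1}_{x_t(\alpha)\in I}$ is jointly measurable (this is exactly the content recorded in Definition \ref{mrk}: $x_t(\alpha)$ is continuous in $t$ and measurable in $\alpha$, hence jointly Borel measurable, and $\mathcal{A}(I)$ is measurable). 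Third, apply Tonelli's theorem to the nonnegative jointly measurable function $\mathbf{1}_{\mathcal{A}(I)}(\alpha)\,\mathbf{1}_{x_t(\alpha) \in I}$ on $[0,\infty) \times [0,1]$, which yields $\int_0^\infty \mathcal{L}(\{\alpha \in \mathcal{A}(I) : x_t(\alpha)\in I\})\,dt = \int_{\mathcal{A}(I)} \mathcal{L}(T(\alpha,I))\,d\alpha$. Fourth, note the right-hand side equals $\int_{\mathcal{A}(I)} \infty \, d\alpha$, which is $+\infty$ if $\mathcal{L}(\mathcal{A}(I)) > 0$ and $0$ if $\mathcal{L}(\mathcal{A}(I)) = 0$. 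Combining with the finiteness from step one forces $\mathcal{L}(\mathcal{A}(I)) = 0$.

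I do not expect any serious obstacle here — the argument is a clean application of Tonelli/Fubini for nonnegative functions, and all the measurability bookkeeping has already been set up in Definition \ref{mrk} and the preceding lemmas. The only thing to be slightly careful about is making sure the integral $\int_0^\infty \mu_t(I)\,dt$ is genuinely over the interval $I$ that the preceding lemma applies to (a closed interval contained in some $(z_i, z_i+\epsilon)$ or $(z_i-\epsilon, z_i)$), but since that is precisely the hypothesis on $I$ in the current lemma, there is nothing to check. One could phrase the whole thing without contradiction, directly as a chain of (in)equalities ending in $\int_{\mathcal{A}(I)} \mathcal{L}(T(\alpha,I))\,d\alpha < \infty$ and hence $\mathcal{L}(\mathcal{A}(I)) = 0$, which is cleaner; I would likely write it that way.
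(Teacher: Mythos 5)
Your argument is correct and is essentially the paper's own proof: both rest on the finiteness of $\int_0^\infty \mu_t(I)\,dt$ from the preceding lemma, rewrite $\mu_t(I)$ as $\int_0^1 \mathbf{1}_{x_t(\alpha)\in I}\,d\alpha$, and apply Tonelli to conclude that $\mathcal{L}(T(\alpha,I))$ is finite for almost every $\alpha$. The paper writes it directly as the chain of equalities you mention at the end rather than by contradiction, but the content is the same.
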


\smallskip 

\begin{proof} By the previous lemma,  $\int_{0}^{\infty} \mu_t(I) ~dt < \infty$. We can rewrite this as $\int_{0}^{\infty} \int_0^1 1_{x_t(\alpha) \in I} ~d \alpha ~dt   < \infty$. Note that because $x_t(\alpha)$ is jointly measurable in $t$ and $\alpha$, the function $1_{x_t(\alpha) \in I}$ is measurable and the
above expression makes sense. Since the function $1_{x_t(\alpha) \in I}$ is nonnegative,  by Tonelli's theorem
we can interchange the order of integration to obtain $\int_0^1 \int_{0}^{\infty} 1_{x_t(\alpha) \in I} ~dt ~d \alpha < \infty$, 
or  $\int_0^1 \aorev{{\mathcal L }} \left(T(\alpha, I)  \right)~d \alpha < \infty$,
which implies that $\aorev{{\mathcal L}} \left( T(\alpha,I) \right)$ can be infinite only on a set of $\alpha$'s of measure $0$. 
\end{proof} 

\smallskip

\ao{\begin{corollary} Let $\epsilon$ be feasible and let $I$ be a closed interval contained in some $(z_i, z_i+\epsilon)$ or $(z_i-\epsilon, z_i)$. Then the set of agents $\alpha$ having the property that, for some sequence of times $t_k \rightarrow +\infty$ we have that $x_{t_k}(\alpha) \in I$, is a subset of a measure-zero set. \label{manyvisits}
\end{corollary}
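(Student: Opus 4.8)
Looking at this statement, the plan is to deduce Corollary \ref{manyvisits} from Lemma \ref{intervallemma}, which asserts that the set $\mathcal{A}(I)$ of agents spending an infinite amount of time in $I$ has Lebesgue measure zero. The subtlety is that an agent could in principle visit $I$ infinitely often while spending only a finite total amount of time there (each visit being very brief), so crossing $I$ infinitely often is a priori weaker than belonging to $\mathcal{A}(I)$. The key observation that bridges this gap is the uniform Lipschitz continuity of $x_t(\alpha)$ in $t$ furnished by Lemma \ref{lem:x_[01]_Lips}: if $x_{t_0}(\alpha)$ lies strictly inside the interior of $I$, then $x_t(\alpha)$ remains in $I$ for a time interval of length at least $d/W$, where $d$ is the distance from $x_{t_0}(\alpha)$ to the endpoints of $I$ and $W$ is the bound on the weights.

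First I would fix a feasible $\epsilon$ and a closed interval $I = [a,b]$ contained in some $(z_i, z_i+\epsilon)$ or $(z_i - \epsilon, z_i)$. Pick a slightly smaller closed interval $I_0 = [a + \tau, b - \tau]$ for some small $\tau > 0$ with $I_0$ still contained in the same open interval. Then I would partition the "infinitely often" event: if $x_t(\alpha) \in I_0$ infinitely often, then by the Lipschitz bound every visit to $I_0$ forces $x_t(\alpha)$ to stay in $I$ for a time of length at least $\tau/W$, and since there are infinitely many such visits occurring at arbitrarily large times, $\mathcal{L}(T(\alpha, I)) = \infty$, i.e.\ $\alpha \in \mathcal{A}(I)$. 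By Lemma \ref{intervallemma} applied to $I$, that set has measure zero. So the set of agents visiting $I_0$ infinitely often is contained in a measure-zero set. Finally I would cover the original interval $I$ (or rather the claim about $I$) by countably many such slightly-shrunk intervals: writing $I$ as an increasing union $I = \bigcup_n I_0^{(n)}$ with $I_0^{(n)} = [a + 1/n, b - 1/n]$, an agent with $x_t(\alpha) \in I$ infinitely often must, since $I \setminus \bigcup_n I_0^{(n)}$ is just the two endpoints, have $x_t(\alpha) \in I_0^{(n)}$ infinitely often for some $n$ — unless it visits only the endpoints infinitely often, which I would handle by noting the endpoints $a, b$ are interior to slightly larger intervals still in the open interval, so the same Lipschitz argument applies. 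Hence the set of agents visiting $I$ infinitely often is a countable union of measure-zero sets, thus contained in a measure-zero set.

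The main obstacle, and the only real content beyond bookkeeping, is the step converting "infinitely many visits" into "infinite occupation time," which relies essentially on the uniform-in-$\alpha$ Lipschitz constant $W$; without uniformity one could not get a single positive lower bound on the length of each sojourn. A secondary technicality is measurability of the "infinitely often" set, but this follows as in Definition \ref{mrk} from joint Borel measurability of $x_t(\alpha)$: the event can be written as $\bigcap_{N} \bigcup_{t \in \mathbb{Q}, t \geq N} \{ \alpha : x_t(\alpha) \in I \}$ up to the usual continuity arguments, or one simply notes it is sandwiched between measurable sets and the corollary only claims containment in a measure-zero set anyway. I would keep the exposition short: state the shrink-the-interval trick, invoke Lemma \ref{lem:x_[01]_Lips} for the sojourn-time bound, invoke Lemma \ref{intervallemma}, and take a countable union.
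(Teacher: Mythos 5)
Your central idea---use the uniform Lipschitz bound $|\dot x_t(\alpha)|\leq W$ from Lemma \ref{lem:x_[01]_Lips} to convert ``infinitely many visits'' into ``infinite occupation time'' of a nearby interval, and then invoke Lemma \ref{intervallemma}---is exactly the idea of the paper's proof. However, the specific decomposition you propose has a genuine flaw. You shrink $I=[a,b]$ to $I_0^{(n)}=[a+1/n,\,b-1/n]$ and claim that an agent visiting $I$ infinitely often must either visit some $I_0^{(n)}$ infinitely often or visit the two endpoints infinitely often. That dichotomy is false: an agent can enter $I$ at times $t_k\to\infty$ reaching only the point $a+1/k$ before retreating out of $I$. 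Such an agent visits $I$ infinitely often, visits each fixed $I_0^{(n)}$ only finitely often, and never touches $a$ or $b$, so it escapes both branches of your case split. The countable union over $n$ therefore does not cover the set you need to control.

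The fix is the enlargement you sketch only for the endpoints, applied globally---and this is precisely what the paper does. Take a closed interval $I'\supset I$ still contained in the same $(z_i,z_i+\epsilon)$ or $(z_i-\epsilon,z_i)$, and let $d>0$ be the distance from $I$ to the complement of $I'$. By Lipschitz continuity, every time $x_t(\alpha)$ lands anywhere in $I$ it must remain in $I'$ for a time at least $d/W$, so infinitely many visits to $I$ at times tending to infinity force $\mathcal{L}(T(\alpha,I'))=\infty$, i.e.\ $\alpha\in\mathcal{A}(I')$, which has measure zero by Lemma \ref{intervallemma} applied to $I'$. This single step handles all of $I$ at once; the shrinking construction and the countable union are unnecessary, and in your write-up they are also where the argument breaks.
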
 }

\ao{\begin{proof} Let $I'$ be a closed interval which contains $I$ and is contained in the same $(z_i, z_i + \epsilon)$ or $(z_i-\epsilon, z_i)$. Since 
$x_t(\alpha)$ is Lipschitz continuous by Lemma \ref{lem:x_[01]_Lips}, 
we have that if agent $\alpha$ visits $I$ \aorev{at some sequence of times approaching infinity} then $\alpha \in {\cal A}(I')$, because the time needed to go to/from $I$ from/to outside $I'$ is bounded away from 0. Lemma \ref{intervallemma} then
implies the current corollary. 
\end{proof}}

\smallskip

\ao{We are now ready to prove Theorem \ref{densityconvergence}. Our proof will only rely on the last Corollary \ref{manyvisits} as well as Lemma \ref{bmeasure} proved earlier. As we argue next, these two facts immediately imply that only a set of measure zero fo agents does not 
converge to one of the $z_i$. }

\bigskip

\ao{\begin{proof}[Proof of Theorem \ref{densityconvergence}] Without loss of generality, let us assume that $z_i$ are sorted in increasing order, i.e., \aorev{$z_1 < z_2 < \cdots < z_p$}. Suppose agent $\alpha$ does not converge to \aorev{an element of the set $\{z_1, \ldots, z_p\}$}, and consider the set of limit points of $x_t(\alpha)$ \aorev{(i.e, the set of points which are limits of $x_{t_k}(\alpha)$ for some sequence of times $t_k \rightarrow +\infty$)}. There are several possibilities. 

\begin{enumerate} 
\item If $x_t(\alpha)$ has a limit point which is $z_i$ and another limit point which is strictly larger than this $z_i$, then it crosses some closed interval
$I$ with rational endpoints contained in some $(z_i, z_i+\epsilon)$ for a feasible $\epsilon$ \aorev{during a sequence of times $t_k \rightarrow +\infty$.}
\item If $x_t(\alpha)$ has a limit point which is $z_i$ and another limit point which is strictly less than this $z_i$, then it crosses some closed interval $I$ with rational endpoints
contained in some $(z_i-\epsilon, z_i)$ for a feasible $\epsilon$ \aorev{during a sequence of times $t_k \rightarrow +\infty$.}
\item If $x_t(\alpha)$ has a limit point which is strictly less than some $z_i$ and a limit point which is strictly greater than \aorev{the same} $z_i$, 
then it crosses some closed interval $I$ with rational endpoints
contained in some $(z_i-\epsilon, z_i)$ for a feasible $\epsilon$ \aorev{during a sequence of times $t_k \rightarrow +\infty$.}
\item If all the limit points of $x_t(\alpha)$ are contained in some $(z_i, z_{i+1})$ then {$x_t(\alpha)$} belongs to some $\aor{{\cal B}}_{1/m}$ for a large enough $m$, i.e. $x_t(\alpha)$ remains at a distance larger than $1/m$ from all $z_i$ after a certain time.
\item \aorev{Finally, if all the limit points of $x_t(\alpha)$ are strictly below $z_1$ or strictly above $z_p$, then just as in the previous item $x_t(\alpha)$ belongs to some ${\cal B}_{1/m}$.}
\end{enumerate}

Now Lemma \ref{bmeasure} tells us that ${\cal L}({\cal B}_{\epsilon}) = 0$ for all feasible $\epsilon$. Taking a countable union over $\epsilon = 1/m$ with  \aorev{$m$ integer} gives us that the set of agents which satisfy (1) has measure zero. The same argument coupled with Lemma \ref{manyvisits} shows the set of agents that satisfy the conditions of each
of the items $(2)-(4)$ is a subset of a set measure zero. To summarize, if $x_t(\alpha)$ does not converge to some $z_i$, we have that it lies inside a union of four measure zero sets. 

It remains to argue that the set of $\alpha$ such that $x_t(\alpha)$ does not converge to one of the $z_i$ is measurable. We know 
the function $\lim \sup_{n \rightarrow \infty} x_{n/k}(\alpha)$ is measurable for any fixed positive integer $k$; consequently, 
$F(\alpha) = \lim \sup_{t \geq 0} x_t(\alpha)$ is measurable since $|\dot{x}_t(\alpha) | \leq W$ implies that $F(\alpha) = 
\lim_{k \rightarrow \infty} \lim \sup_{n \rightarrow \infty} x_{n/2^k}(\alpha)$. Similarly, $G(\alpha) = \lim \inf_t x_t(\alpha)$ is measurable. Then the 
set of $\alpha$ such that $x_t(\alpha)$ does not converge to one of the points $z_i$ is $[0,1] \setminus  \bigcup_{i} \left(F^{-1} (z_i) \cap G^{-1}(z_i) \right)$
so it is measurable.

\end{proof} }

\subsection{Proof of Theorem \ref{thm:Lips->conv}}

We will show that under the assumptions of Theorem \ref{thm:Lips->conv}, the order of the opinions is preserved, in the sense that the difference of opinions between two agents never changes sign, and that together with convergence in distribution this is sufficient to prove convergence for almost all agents. 

\smallskip

Formally, consider the functions $x_t(\alpha):[0,1]\to \Re$ for every $t\in \Re^+$. 
We say that the \emph{order of $x_t$ is preserved} if $x_t(\beta) > x_t(\alpha)$ for some $t$ implies $x_s(\beta)> x_s(\alpha)$ for all $s$. By contraposition, it is equivalent to requiring that 
$x_t(\beta) \leq x_t(\alpha)$ if an only if $x_s(\beta) \leq  x_s(\alpha)$ for all $s$, and implies thus that $x_t(\beta) = x_t(\alpha)$ for some $t$ if and only if $x_s(\beta) = x_s(\alpha)$ for all $s$. 

The following lemma shows that convergence in distribution implies convergence for almost all $\alpha$ when the order is preserved. Intuitively, the result is quite clear: the measure $\mu_t$ contains exactly the same information as $x_t$ up to possible \quotes{relabelling} or \quotes{switching} of the agents. But when the order is preserved, no such switching can take place, so that convergence in distribution implies convergence for almost every $\alpha$. A formal proof is presented in Appendix \ref{sec:proof_lemma_order}.

\smallskip
\begin{lemma}\label{lem:order+weak->a.e.}
Suppose that $x_t$ converges in distribution: 
There exists a measure $\mu_{\infty}$ on $[0,1]$ such that $\mu_t$ approaches $\mu_{\infty}$ in distribution, where $\mu_t$ is defined as in Eq. (\ref{mudef}).\\
If  $x_t(\alpha) \in [0,1]$ holds for all $t,\alpha$, and if the order of $x_t$ is preserved, then $x_t(\alpha)$ converges for almost all $\alpha$. 
\end{lemma}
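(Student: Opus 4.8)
The plan is to exploit the fact that, when the order of $x_t$ is preserved, each profile $x_t(\cdot)$ is a rearrangement of one fixed ordering of the agents, so that $x_t$ is essentially determined by the measure $\mu_t$ through its quantile (generalized inverse) function, and convergence in distribution then transfers to pointwise convergence of $x_t(\alpha)$ for almost every $\alpha$. Concretely, I would first observe that order preservation makes the set $\{\beta : x_t(\beta) < x_t(\alpha)\}$ independent of $t$, so that
\[ R(\alpha) := {\mathcal L}\bigl(\{\beta : x_t(\beta) < x_t(\alpha)\}\bigr), \qquad R^+(\alpha) := {\mathcal L}\bigl(\{\beta : x_t(\beta) \leq x_t(\alpha)\}\bigr) \]
are well-defined functions of $\alpha$ alone. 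Writing $F_t(c) = \mu_t((-\infty,c])$ and $Q_t(p) = \inf\{c : F_t(c) \geq p\}$ for the cumulative distribution function and the quantile function of $\mu_t$, a short computation from the definition of $\mu_t$ gives $F_t(x_t(\alpha)^-) = R(\alpha)$ and $F_t(x_t(\alpha)) = R^+(\alpha)$, from which I would deduce that $Q_t(p) = x_t(\alpha)$ for every $p \in (R(\alpha), R^+(\alpha)]$, and $Q_t(R(\alpha)) \leq x_t(\alpha) \leq Q_t(p)$ for every $p > R^+(\alpha)$.

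Next I would bring in the hypothesis. Weak convergence $\mu_t \to \mu_\infty$ is equivalent to $F_t(c) \to F_\infty(c)$ at every continuity point of $F_\infty$, which is in turn equivalent to $Q_t(p) \to Q_\infty(p)$ at every continuity point $p$ of the nondecreasing function $Q_\infty$ (see for instance \cite{b86}); as a monotone function has at most countably many discontinuities, this produces a set $P \subseteq (0,1)$ with countable complement such that, for every $p \in P$, both $Q_t(p) \to Q_\infty(p)$ and $Q_\infty$ is continuous at $p$. I would then split the agents: if $R(\alpha) < R^+(\alpha)$, the interval $(R(\alpha), R^+(\alpha)]$ meets $P$ at some $p$, whence $x_t(\alpha) = Q_t(p) \to Q_\infty(p)$; and if $R(\alpha) = R^+(\alpha) =: \rho$ with $\rho \in P$, then picking $p_n \in P$ with $p_n \downarrow \rho$ and combining the sandwich above with $Q_t(\rho) \to Q_\infty(\rho)$, $Q_t(p_n) \to Q_\infty(p_n)$ and the continuity of $Q_\infty$ at $\rho$ yields again $x_t(\alpha) \to Q_\infty(\rho)$. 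Thus $x_t(\alpha)$ converges for every $\alpha$ outside $\bigcup_{\rho \in (0,1) \setminus P} A_\rho$, where $A_\rho := \{\alpha : R(\alpha) = R^+(\alpha) = \rho\}$.

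The remaining step, which I expect to be the main obstacle, is to show that each $A_\rho$ is Lebesgue null; since $(0,1)\setminus P$ is countable this then finishes the proof. The plan here is: for a fixed time $t$, every $\alpha \in A_\rho$ satisfies ${\mathcal L}(\{\beta : x_t(\beta) = x_t(\alpha)\}) = R^+(\alpha) - R(\alpha) = 0$ and $F_t(x_t(\alpha)^-) = \rho$, so the values taken on $A_\rho$ all lie in the interval $J = \{c : F_t(c^-) = \rho\}$, and monotonicity of $c \mapsto F_t(c^-)$ gives ${\mathcal L}(\{\beta : x_t(\beta) \in \mathrm{int}\, J\}) = 0$; hence almost every $\alpha \in A_\rho$ has $x_t(\alpha)$ equal to one of the at most two endpoints of $J$, and if ${\mathcal L}(A_\rho) > 0$ then some such endpoint $c_0$ would carry positive mass, ${\mathcal L}(\{\beta : x_t(\beta) = c_0\}) > 0$, contradicting $R^+(\alpha) - R(\alpha) = 0$ for $\alpha \in A_\rho$ with $x_t(\alpha) = c_0$. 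The delicate point is precisely that no positive-measure set of agents can share one ``exceptional rank'': this is intuitively clear but needs the careful bookkeeping relating $x_t(\alpha)$ to $Q_t$ and the behaviour of $F_t$ near gaps in the support of $\mu_t$; the rest is routine manipulation of quantile functions and of weak convergence. An equivalent, slightly slicker formulation would sort the agents once according to the fixed order, producing a time-independent measure-preserving map $\sigma$ with $x_t \circ \sigma = Q_t$ almost everywhere, but this only repackages the same computation with the same subtlety hidden in the rearrangement lemma.
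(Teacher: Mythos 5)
Your proposal is correct, but it follows a genuinely different route from the paper. The paper argues by contradiction: it considers the set $N$ of non-convergent agents, associates to each $\alpha\in N$ the oscillation interval $I_\alpha=(\liminf_t x_t(\alpha),\limsup_t x_t(\alpha))$, shows via a continuous test function and the Portmanteau theorem that the set of $\beta$ whose interval meets $I_\alpha$ is null (this is where order preservation enters, through a preliminary lemma producing a positive-measure ``block'' of agents trapped between two others), and then concludes with a packing argument: a positive-measure set of agents with oscillation $\geq\lambda$ would force more than $1/\lambda$ disjoint intervals of length $\geq\lambda$ inside $[0,1]$. Your argument is instead constructive: order preservation makes the rank functions $R,R^+$ time-independent, so $x_t(\alpha)$ is pinned to the quantile function $Q_t$ on the rank interval $(R(\alpha),R^+(\alpha)]$, and the standard equivalence between weak convergence and convergence of quantile functions at continuity points yields the limit explicitly as $Q_\infty$ evaluated at the agent's rank. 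Your handling of the genuinely delicate step --- showing that the agents whose rank is a single exceptional point $\rho$ form a null set, via the observation that such agents must sit at an endpoint of the interval $\{c:F_t(c^-)=\rho\}$ and that a positive mass there would contradict $R^+-R=0$ --- is sound. What each approach buys: yours identifies the limit of $x_t(\alpha)$ and is essentially the classical ``quantile coupling'' fact that a weakly convergent family of monotone rearrangements converges almost everywhere, while the paper's argument avoids quantile machinery and works directly with oscillation sets, at the cost of being purely a non-constructive contradiction. The only repair your write-up needs is cosmetic: the exceptional ranks $\rho=0$ and $\rho=1$ lie outside $P\subseteq(0,1)$ but also outside your union $\bigcup_{\rho\in(0,1)\setminus P}A_\rho$; you should take the union over $\rho\in[0,1]\setminus P$ (still countable, and your nullity argument for $A_\rho$ applies verbatim at the endpoints).
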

\smallskip

Intuitively, the order should always be preserved if the interactions are entirely determined by the agents' positions and by time. Indeed, suppose that $x_t(\beta)>x_t(\alpha)$ for some $t$, and $x_s(\beta)<x_s(\alpha)$ for some $s>t$. The continuity of $x$ implies the existence of a time $s'\in(t,s)$ at which the agents have the same position $x_{s'}(\beta) = x_{s'}(\alpha)$. But since the interactions are entirely determined by the positions, the opinions of $\alpha$ and $\beta$ will at that point be subject to exactly the same attractions, and they thus should remain equal forever. As a result, we could never have  $x_s(\beta)<x_s(\alpha)$ after $x_t(\beta)>x_t(\alpha)$. Similar intuitive arguments can be used to suggest that $x_t(\beta) = x_t(\alpha)$ if and only if $x_s(\beta) = x_s(\alpha)$ for all $s$ and that the order of $x$ is thus preserved.

These intuitive arguments are however not formally valid, as they implicitly rely on the uniqueness of the solution to the equation (\ref{mainiteration}) describing the evolution of the opinions of $\alpha$ and $\beta$ with initial conditions $x_{s'}(\beta) = x_{s'}(\alpha)$ at time $s'$, and this uniqueness is in general not guaranteed. Different issues related to uniqueness of solutions have been reported even for very simple models, for example in \cite{bht10}.

Nevertheless, preservation of the order can be established under an additional smoothness assumption that guarantees that two agents with different positions never reach a same point in finite time.

\smallskip

\begin{lemma}\label{lem:lips->order}
Suppose that the interaction weights $w(.,.,.,.,.)$ are symmetric, nonnegative, bounded by some $ W\in \R$, and only depend on time and the positions:
$$w(t,x_t(\alpha),x_t(\beta),\alpha,\beta)  = \tw(t,x_t(\alpha),x_t(\beta)).$$
If $\tw$ is 
Lipschitz continuous with respect to $x$, i.e. there exists a $L$ such that $|\tw(t,x,y) - \tw(t,x,z)| \leq L \abs{y-z}|$ for all $t,x,y,z$,
then the order of any solution $x_t$ of the system \eqref{mainiteration} is preserved.
\end{lemma}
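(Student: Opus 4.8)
The plan is to fix an arbitrary pair of agents $\alpha,\beta$, follow the difference of their opinions $d_t := x_t(\beta)-x_t(\alpha)$, and show that it satisfies a homogeneous linear differential inequality $|\dot d_t|\le C|d_t|$ for almost every $t$; a Gronwall argument then prevents $d_t$ from ever reaching or leaving $0$, which is exactly what preservation of the order means. First I would record two elementary facts: $d_t$ is absolutely continuous, being a difference of absolutely continuous functions, and by Lemma \ref{lem:x_[01]_Lips} all opinions stay in $[0,1]$, so $|x_t(\gamma)-x_t(\beta)|\le 1$ for every $\gamma$ and $t$.

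Next I would compute $\dot d_t=\dot x_t(\beta)-\dot x_t(\alpha)$ from \eqref{mainiteration} using the hypothesis that $w$ depends only on $t$ and the two opinions, $w(t,\alpha,\gamma,x_t(\alpha),x_t(\gamma))=\tw(t,x_t(\alpha),x_t(\gamma))$. Abbreviating $w_\alpha=\tw(t,x_t(\alpha),x_t(\gamma))$ and $w_\beta=\tw(t,x_t(\beta),x_t(\gamma))$, the integrand rearranges as
\[
w_\beta\,(x_t(\gamma)-x_t(\beta))-w_\alpha\,(x_t(\gamma)-x_t(\alpha))=(w_\beta-w_\alpha)(x_t(\gamma)-x_t(\beta))-w_\alpha\,d_t ,
\]
so that $\dot d_t=\int_0^1(w_\beta-w_\alpha)(x_t(\gamma)-x_t(\beta))\,d\gamma-d_t\int_0^1 w_\alpha\,d\gamma$ for almost all $t$. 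Since $w$ is symmetric, $\tw$ is symmetric in its two opinion arguments, so the Lipschitz hypothesis applied in the ``other-agent'' slot gives $|w_\beta-w_\alpha|\le L\,|x_t(\beta)-x_t(\alpha)|=L|d_t|$. Combined with $|x_t(\gamma)-x_t(\beta)|\le 1$ and $0\le \tw\le W$, this yields the bound $|\dot d_t|\le (L+W)|d_t|$ for almost all $t$.

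Finally I would close with a two-sided Gronwall estimate applied to $\phi(t):=|d_t|$, which is absolutely continuous with $|\dot\phi(t)|\le|\dot d_t|\le(L+W)\phi(t)$ a.e. Then $e^{(L+W)t}\phi(t)$ is nondecreasing and $e^{-(L+W)t}\phi(t)$ is nonincreasing, whence $\phi(t)\ge\phi(t_0)\,e^{-(L+W)|t-t_0|}$ for all $t,t_0\ge 0$. Consequently, if $d_{t_0}\neq 0$ for some $t_0$ then $d_t\neq 0$ for every $t\ge 0$; since $d_t$ is continuous it then keeps a constant sign, so $x_t(\beta)>x_t(\alpha)$ at one time forces $x_s(\beta)>x_s(\alpha)$ at every time $s$. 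As $\alpha$ and $\beta$ were arbitrary, the order of $x_t$ is preserved.

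The main obstacle is the algebraic step: one must extract exactly the self-attraction term $-d_t\int_0^1 w_\alpha\,d\gamma$, which is trivially bounded by $W|d_t|$, so that the remainder involves only the weight \emph{difference} $w_\beta-w_\alpha$ and not the weights themselves. It is precisely the Lipschitz assumption on $\tw$ — read through the symmetry of $\tw$ so that the Lipschitz slot sits on the two agents in question — that controls this remainder by $L|d_t|$; a cruder estimate bounding the two individual velocities would leave a term of order $W$ that does not vanish as $d_t\to 0$ and would not rule out a sign change. Once the estimate $|\dot d_t|\le(L+W)|d_t|$ is in hand, the Gronwall conclusion is entirely routine.
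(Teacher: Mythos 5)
Your proposal is correct and follows essentially the same route as the paper: decompose $\dot x_t(\beta)-\dot x_t(\alpha)$ into a term involving the weight difference $\tw(t,x_t(\beta),x_t(\gamma))-\tw(t,x_t(\alpha),x_t(\gamma))$ (controlled by the Lipschitz constant $L$) plus a self-attraction term proportional to $x_t(\beta)-x_t(\alpha)$ (controlled by $W$), deduce $|\dot d_t|\le (L+W)|d_t|$, and conclude by a two-sided Gronwall argument. Your explicit appeal to the symmetry of $\tw$ to move the Lipschitz bound from the last opinion slot to the other one is a small point the paper leaves implicit, and is a welcome clarification.
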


\begin{proof}
Consider two arbitrary agents $\alpha,\beta$, and observe that
\begin{align*}
\dot x_t(\beta) -&\dot x_t(\alpha)=\\
 &\int_0^1 \tw(t,x_t(\beta),x_t(\gamma)) (x_t(\gamma) - x_t(\beta)) d\gamma -\tw(t,x_t(\alpha),x_t(\gamma)) (x_t(\gamma) - x_t(\alpha)) d\gamma \\
=& \int_0^1 \prt{ \tw(t,x_t(\beta),x_t(\gamma))- \tw(t,x_t(\alpha),x_t(\gamma))}\prt{x_t(\gamma)-x_t(\alpha)}d\gamma  \\ &+ \int_0^1  \tw(t,x_t(\beta),x_t(\gamma)) \prt{x_t(\alpha)-x_t(\beta)}d\gamma.
\end{align*}
Using the Lipschitz constant $L$ and the upper bound $\tw \leq W$ on this inequality leads to   
\begin{equation}\label{eq:bound_order_almost_final}
\abs{\dot x_t(\beta) -\dot x_t(\alpha) } \leq
\int_0^1 L  \abs{x_t(\beta)-x_t(\alpha)}\abs{x_t(\gamma)-x_t(\alpha)}d\gamma  + \int_0^1   W \abs{x_t(\alpha)-x_t(\beta)}d\gamma 
\end{equation}
Remember that $x_0(\alpha)$ is assumed to lie in $[0,1]$ throughout this paper. Moreover, we have seen in Lemma \ref{lem:x_[01]_Lips} how this implies that $x_t(\gamma)\in [0,1]$ for all $t\geq 0, \gamma\in [0,1]$.
%the set of points $\gamma$ for which $x_t(\gamma)\not\in [0,1]$ has a zero measure for every $t$. 
As a result, there holds $\int_0^1 \abs{x_t(\gamma)-x_t(\alpha)}d\gamma \leq 1$, which reintroduced in  \eqref{eq:bound_order_almost_final} yields
$$
\abs{\dot x_t(\beta) -\dot x_t(\alpha) } \leq  \prt{ L + W} \abs{x_t(\alpha)-x_t(\beta)}
$$
This last bound implies that $x_t(\beta)-x_t(\alpha)$ decreases or increases at most exponentially fast, with a rate bounded in absolute value by $ L + W$. Therefore, if $x_t(\beta) > x_t(\alpha)$ for some $t$, then $x_s(\beta) > x_s(\alpha)$ for all $s$ (The fact is obvious for $s\geq t$, and can easily be seen for $s<t$ by reversing the time). This implication being true for any pair of agents $\alpha,\beta$, the order of $x_t$ is preserved.
\end{proof}

\smallskip

Note that the result still holds if the bounds $W$ and $L$ depend on time, provided that 
$\int_{s=0}^tL(s)ds<\infty$ and $\int_{s=0}^t W(s)ds <\infty$. The proof is exactly the same.

\smallskip

Since Lemma \ref{lem:x_[01]_Lips} states that $x_t(\alpha) \in [0,1]$ for all $\alpha,t$, the result of Theorem \ref{thm:Lips->conv} follows then directly from  from the convergence in distribution of $x_t$, guaranteed by Theorem \ref{weakconv}, and the combination of Lemma \ref{lem:lips->order} with Lemma \ref{lem:order+weak->a.e.}.

\section{Conclusion\label{conclusion}} 

Our goal in this paper has been to analyze the asymptotic behavior of 
opinion dynamics. We have been able to resolve several questions implicit in the previous literature
on the subject. In Theorem \ref{weakconv} we proved that symmetry alone appears to suffice for the convergence in distribution of such systems.
% (in addition to the usual conditions of nonnegativity and such). 
Moreover, we showed in Theorems \ref{counterexample}, \ref{densityconvergence} and \ref{thm:Lips->conv} that while these systems converge in the sense of distributions, convergence for almost all agents is not automatic but rather crucially depends on additional assumptions, in sharp opposition with the results obtained for systems with finitely many agents \cite{HT:2013}. 

%Our results also cast a new light on those obtained in \cite{HT:2013}. Convergence there is indeed first established for a sorted version of the vector %containing all opinions, which is equivalent to proving the convergence of the opinions in distribution as in Theorem \ref{weakconv}. Convergence of the %opinions themselves then only follows from a continuity argument relying crucially on the discrete nature of the agents. This a priori artificial separation of %the argument appears much more fundamental in view of the possibility for system with a continuous mass of agents to converge only in distribution.

Our motivation for studying these systems has been in the similarity they share with other multi-agent systems, namely the presence of a 
nonlinearity due to a time-varying update rule. 
%We hope that the techniques we have developed to analyze these systems will find applications 
in the analysis of other multi-agent systems. 
We note that Theorem \ref{weakconv} was proved
without precise Lyapunov estimates on the decay by instead relying on a large class of Lyapunov functions coupled with an appeal to results
concerning the Haussdorff moment problem; to our knowledge, this is a \ao{completely} new approach.

We conclude with an open question. The contrast between Theorem \ref{counterexample} on the one hand and Theorems \ref{densityconvergence} and \ref{thm:Lips->conv} on the other hand
leads one to wonder whether a precise characterization of the settings in which convergence occurs for almost all agents is possible.

% Moreover, 
%we wonder when it is possible to establish convergence time bounds on continuum opinion dynamics. Such bounds 
%are likely to be useful in the analysis of convergence times in the case of finitely many agents. 

\section*{Acknowledgements} 
The authors wish to thank Francesca Ceragioli and Paolo Frasca for their help on the relevant assumptions needed
%that need to be made 
on the solutions $x$.

\appendix
\section{A Brief Note on Existence and Uniqueness Issues}

In general, analyzing the conditions under which existence and uniqueness holds for the integro-differential equation of Eq. (1.1) appears to be challenging, and is out of scope of the present paper. Nevertheless, we would like to prove that existence and uniqueness does hold in a large number of interesting cases. We now proceed to state a theorem to this effect.

We first require some definitions. As in the body of the paper, we will use $x_t(\alpha)$ or $y_t(\alpha)$ to denote functions from $[0, \infty) \times [0,1]$ to $\mathbb{R}$. For such  functions, we define the truncated infinity norm in the usual way, $\jmhr{||y||_{\infty,T}} = \max_{t \in [0,T], \alpha \in [0,1]} |y_t(\alpha)|$.

Our existence and uniqueness theorem, stated next, tells us that subject to some continuity and Lipschitz assumptions on the function $w(\cdot, \cdot, \cdot, \cdot, \cdot)$ in Eq. (1.1) there exists exactly one solution of Eq. (1.1) satisfying  the conditions we have assumed in the body of the paper. 

\smallskip

\begin{theorem} Suppose $w(\cdot, \cdot, \cdot, \cdot, \cdot)$ is a jointly measurable function of five arguments which is continuous in the first, fourth, and fifth argument and 
Lipschitz in each of the last two arguments with Lipschitz constant $L$. Furthermore, suppose $w(\cdot, \cdot, \cdot, \cdot, \cdot)$ also satisfies the bound  $|w(\cdot, \cdot, \cdot, \cdot, \cdot)| \leq W < +\infty$.

\smallskip

Then, given any measurable \jmhr{initial condition} function $\tilde x_0: [0,1] \rightarrow [0,1]$, there exists a unique function $x_t(\alpha)$ having the following four properties: 
\begin{enumerate}
\item $x_t(\alpha)$ satisfies Eq. (1.1) for all $t \in [0,\infty)$ and $\alpha \in [0,1]$.
\item $x_t(\alpha)$ has the property that $x_0(\alpha) = \tilde x_0(\alpha)$ for all $\alpha \in [0,1]$. 
 \item $x_t(\alpha)$ is measurable in $\alpha$ for any fixed $t \geq 0$ and continuously differentiable in $t$ for any fixed $\alpha \in [0,1]$.

\item $ $\\
\vspace{-.9cm}
\begin{equation} \label{trunc} 
||x||_{\infty,T} < \infty 
\mbox{ for all } T \geq 0. 
\end{equation}
\end{enumerate}
\label{app-thm}
\end{theorem}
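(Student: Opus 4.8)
The plan is to recast \eqref{mainiteration} together with the initial condition as a fixed-point equation and solve it by the Banach fixed-point theorem. For $T>0$ define the operator $\Phi$, acting on functions $x:[0,T]\times[0,1]\to\mathbb{R}$, by
$$(\Phi x)_t(\alpha) = \tilde x_0(\alpha) + \int_0^t \int_0^1 w\bigl(s,\alpha,\beta,x_s(\alpha),x_s(\beta)\bigr)\bigl(x_s(\beta)-x_s(\alpha)\bigr)\,d\beta\,ds.$$
The ambient space is $X_T$, the set of bounded functions on $[0,T]\times[0,1]$ that are Lebesgue measurable in $\alpha$ for each fixed $t$ and continuous in $t$ for each fixed $\alpha$, normed by $\|x\|_{\infty,T}$. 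This is a Banach space: a uniform limit of such functions is still continuous in $t$, and a pointwise limit is still measurable in $\alpha$; moreover continuity in $t$ together with measurability in $\alpha$ forces joint measurability, so, using joint measurability of $w$, the double integral defining $\Phi x$ is meaningful and, by Tonelli followed by Fubini, $\Phi x$ is again measurable in $\alpha$ and continuous in $t$. Thus $\Phi$ maps $X_T$ into itself, and a fixed point of $\Phi$ in $X_T$ automatically satisfies property~2 of the theorem.

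First I would establish an a priori bound. If $x\in X_T$ and $M(t)=\sup_\alpha|x_t(\alpha)|$, then $|w|\le W$ gives $|(\Phi x)_t(\alpha)-\tilde x_0(\alpha)|\le\int_0^t 2WM(s)\,ds$; hence any fixed point satisfies $M(t)\le 1+\int_0^t 2WM(s)\,ds$, so Gronwall yields $M(t)\le e^{2Wt}$ and $\|x\|_{\infty,T}\le e^{2WT}=:R_T$. This will give property~4 once a solution is produced, and it lets us restrict attention to the closed ball $\bar B_T=\{x\in X_T:\|x\|_{\infty,T}\le R_T+1\}$. The quantitative step is the Lipschitz estimate for the integrand: using the Lipschitz hypothesis on $w$ in its fourth and fifth arguments together with $|w|\le W$, one gets, for all $x,x',y,y'\in[-R_T-1,\,R_T+1]$,
$$\bigl|\,w(s,\alpha,\beta,x,x')(x'-x)-w(s,\alpha,\beta,y,y')(y'-y)\,\bigr|\ \le\ C_T\bigl(|x-y|+|x'-y'|\bigr),\qquad C_T:=W+2L(R_T+1).$$
Integrating over $\beta\in[0,1]$ and over a time window of length $\tau$ gives $\|\Phi x-\Phi y\|\le 2C_T\tau\,\|x-y\|$ in the sup-norm over that window, and a similar computation (now using $2W(R_T+1)\tau\le 1$) shows $\Phi$ maps $\bar B_T$ into itself on such windows. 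Choosing $\tau=\tau(W,L,R_T)$ with $2C_T\tau<1$, the Banach fixed-point theorem gives a unique solution on $[0,\tau]$, then on $[\tau,2\tau]$ with the value at $\tau$ as new initial datum, and so on, covering $[0,T]$ in $\lceil T/\tau\rceil$ steps; since $T$ is arbitrary and each restriction is unique, this produces a single global fixed point $x$ on $[0,\infty)$.

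It remains to upgrade regularity and record uniqueness. For the fixed point $x$, the map $s\mapsto F(s,\alpha):=\int_0^1 w(s,\alpha,\beta,x_s(\alpha),x_s(\beta))(x_s(\beta)-x_s(\alpha))\,d\beta$ is continuous for each fixed $\alpha$: as $s'\to s$ the integrand converges pointwise in $\beta$ (by continuity of $w$ in its first, fourth and fifth arguments and continuity of $s\mapsto x_s(\cdot)$) and is dominated by $2W(R_T+1)$, so dominated convergence applies. Hence $x_t(\alpha)=\tilde x_0(\alpha)+\int_0^t F(s,\alpha)\,ds$ with $F(\cdot,\alpha)$ continuous, so $t\mapsto x_t(\alpha)$ is continuously differentiable with derivative equal to the right-hand side of \eqref{mainiteration}; this gives properties~1 and~3, while properties~2 and~4 were already noted. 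For uniqueness, any function satisfying 1--4 solves the integral equation, is bounded on $[0,T]$ by $R_T$ by the a priori estimate, hence lies in $\bar B_T$, so the contraction argument — equivalently a direct Gronwall estimate on $t\mapsto\sup_\alpha|x_t(\alpha)-y_t(\alpha)|$ — forces it to coincide with the solution constructed above.

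The main obstacle is not a single hard inequality but the bookkeeping that keeps everything rigorous: choosing a function space that is simultaneously complete, stable under $\Phi$, and compatible with the measurability hypotheses so that every iterated integral is well-defined; and correctly interleaving the a priori growth bound with the contraction estimate, since the effective Lipschitz constant $C_T$ of the nonlinearity depends on how large the opinions can get, which is precisely what the Gronwall bound controls, so one must pin down $R_T$ first and only then run the contraction.
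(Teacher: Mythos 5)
Your proposal is correct and follows essentially the same route as the paper: recast Eq.~(\ref{mainiteration}) as an integral equation, run a Banach fixed-point/contraction argument on successive time windows in a space of functions continuous in $t$ and measurable in $\alpha$, and then upgrade the fixed point to a $C^1$-in-$t$ solution by dominated convergence and the fundamental theorem of calculus. The only (cosmetic) difference is in how the ball for the contraction is controlled: you use a Gronwall a priori bound $\|x\|_{\infty,T}\le e^{2WT}$, which makes the window length $\tau$ depend on $T$, whereas the paper invokes its Lemma~\ref{lem:x_[01]_Lips} to keep all solution values in $[0,1]$ and thereby works with a fixed radius-$2$ ball and a universal window length $b$; both versions of the bookkeeping are valid.
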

We prove this theorem here in order to demonstrate that the basic object of study of this paper, namely 
solutions of Eq. (\ref{mainiteration}) which do not explode in finite time, exist for a large class of opinion dynamic
models (and are unique).

Nevertheless, 
the assumptions under which we are able to prove the above theorem are considerably more restrictive than the assumptions under
which we can prove Theorems 1.1-1.4 on properties of solutions that do exist. Besides the continuity and Lipschitz assumptions on $w(\cdot, \cdot, \cdot, \cdot, \cdot)$, note that this theorem
discusses functions $x_t(\alpha)$ which are continuously differentiable and for which Eq. (1.1) holds everywhere, whereas Theorems 1.1-1.4 only require $x_t(\alpha)$ to be absolutely 
continuous and Eq. (1.1) to hold almost everywhere. 
Establishing existence and uniqueness of Eq. (1.1) in more general settings is therefore an open problem.

We now turn to the proof, which is a variation of the usual Picard iteration arguments. Our first lemma recasts the problem as an integral equation. This is slightly more delicate than usual, as all our integrals are Lebesgue integrals and we must therefore exercise some care in applying the fundamental theorems of calculus. 
\smallskip
\begin{lemma} Consider the integral equation 
\begin{equation} \label{intform} x_t(\alpha) = \tilde x_0(\alpha) + \int_0^t \int_0^1 w(u, \alpha, \beta, x_u(\alpha), x_u(\beta)) (x_u(\beta) - x_u(\alpha) ~ d \beta ~ du 
\end{equation} 
\jmhr{Let $x_t(\alpha)$ be continuous in $t$ for any fixed $\alpha \in [0,1]$, measurable in $\alpha$ for any fixed $t \in [0,\infty)$, satisfying the local boundedness condition (\ref{trunc}), and the initial condition $x_0(\alpha)=\tilde x_0(\alpha)$.

Then, $x_t$ is a solution of Eq. \eqref{intform} if and only if it is continuously differentiable in $t$ for any fixed $\alpha \in [0,1]$ and satisfies Eq. (1.1) for all $t \in [0, \infty), \alpha \in [0,1]$.}
\label{lemma-int}
\end{lemma}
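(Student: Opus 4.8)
The plan is to isolate the time-dependence of the inner integral of \eqref{intform}. Fixing $\alpha\in[0,1]$ throughout, write
\[
g_\alpha(u) \;=\; \int_0^1 w\bigl(u,\alpha,\beta,x_u(\alpha),x_u(\beta)\bigr)\,\bigl(x_u(\beta)-x_u(\alpha)\bigr)\,d\beta ,
\]
so that \eqref{intform} reads $x_t(\alpha)=\tilde x_0(\alpha)+\int_0^t g_\alpha(u)\,du$ and \eqref{mainiteration} reads $\dot x_t(\alpha)=g_\alpha(t)$. Since $|w|\le W$ and, by \eqref{trunc}, $|x_u(\beta)-x_u(\alpha)|\le 2||x||_{\infty,T}$ for $u\le T$, the integrand is bounded on compact time sets; this, together with routine measurability bookkeeping (for fixed $\alpha$ the map $(u,\beta)\mapsto x_u(\beta)$ is Carath\'eodory, hence jointly measurable, and $w$ is jointly measurable), makes $g_\alpha(u)$ finite for each $u$ and the iterated integral in \eqref{intform} finite for each $t$. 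The whole lemma then reduces to one claim: \emph{$g_\alpha$ is a continuous function of $u$ on $[0,\infty)$}. Once this is established, both implications are just the elementary fundamental theorem of calculus applied to a continuous integrand, for which Lebesgue and Riemann integrals coincide.

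For the continuity claim, I would fix $u_0\ge 0$, a sequence $u_n\to u_0$, and set $T=u_0+1$, so that $u_n\le T$ eventually. For each fixed $\beta$, continuity of $x_{\cdot}(\alpha)$ and $x_{\cdot}(\beta)$ in time gives $x_{u_n}(\alpha)\to x_{u_0}(\alpha)$ and $x_{u_n}(\beta)\to x_{u_0}(\beta)$; continuity of $w$ in its first, fourth and fifth arguments then makes the integrand of $g_\alpha(u_n)$ converge, pointwise in $\beta$, to that of $g_\alpha(u_0)$. As all these integrands are bounded by the $\beta$-integrable constant $2W||x||_{\infty,T}$, dominated convergence gives $g_\alpha(u_n)\to g_\alpha(u_0)$, establishing continuity of $g_\alpha$.

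Reading off the two directions is then immediate. If $x$ solves \eqref{intform}, then $x_t(\alpha)=\tilde x_0(\alpha)+\int_0^t g_\alpha(u)\,du$ with $g_\alpha$ continuous, so $t\mapsto x_t(\alpha)$ is continuously differentiable with $\dot x_t(\alpha)=g_\alpha(t)$ for every $t$ --- that is, \eqref{mainiteration} holds for all $t$ --- and $x_0(\alpha)=\tilde x_0(\alpha)$ by hypothesis. Conversely, if $x$ is continuously differentiable in $t$ and satisfies \eqref{mainiteration} everywhere, then $\dot x_u(\alpha)=g_\alpha(u)$ is continuous in $u$, whence $x_t(\alpha)-x_0(\alpha)=\int_0^t \dot x_u(\alpha)\,du=\int_0^t g_\alpha(u)\,du$, which is exactly \eqref{intform} after expanding $g_\alpha$ and using $x_0(\alpha)=\tilde x_0(\alpha)$. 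I expect the only genuine obstacle to be the continuity step: since a priori $x_t(\alpha)$ is merely continuous in $t$, one cannot differentiate \eqref{intform} directly, and the actual content of the proof is the dominated-convergence argument that the inner integral depends continuously on time; the measurability bookkeeping needed to make \eqref{intform} meaningful in the first place is secondary and routine.
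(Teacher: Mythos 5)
Your proposal is correct and follows essentially the same route as the paper: both proofs reduce the lemma to showing that the inner integral $g_\alpha(u)$ is continuous in $u$ (via dominated convergence, using continuity of $x$ in time, continuity of $w$ in its first, fourth and fifth arguments, and the uniform bound $2W\|x\|_{\infty,T}$), and then apply the first and second fundamental theorems of calculus for the two directions. The only cosmetic difference is that you establish the continuity of $g_\alpha$ once from the standing hypotheses rather than separately within each implication, which is if anything slightly cleaner.
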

\smallskip

\begin{proof} 
We begin by supposing that  $x_t(\alpha)$ \jmhr{is a solution of Eq. (\ref{intform}), and first prove} 
that the integrand \begin{equation} \label{integrand} \int_0^1 w(u, \alpha, \beta, x_u(\alpha), x_u(\beta)) (x_u(\beta) - x_u(\alpha) ~ d \beta \end{equation} 
\jmhr{of \eqref{intform}} is \jmhr{in that case} a continuous function of $u$ for fixed $\alpha$. 
Indeed, fix a time $u'$ and a neighborhood of $u$. Since $|w(\cdot, \cdot, \cdot, \cdot, \cdot)| \leq W$, and since $x$ satisfies \eqref{trunc}, the expression inside the integral in \eqref{integrand} can be uniformly bounded on that neighborhood by some constant $C$, which can also be seen as a (constant) measurable function of $\beta$ defined  on $[0,1]$. 
Moreover, the expression inside the integral in \eqref{integrand} is continuous with respect to $u$, because $x_u(\alpha)$ is continuous with respect to $u$,  and $w$ is continuous with respect to its first, fourth and fifth arguments. 
The continuity of the integrand \eqref{integrand} follows then from the dominated convergence theorem.
We can thus \jmhr{now} apply the first fundamental theorem of calculus (formally, Theorem 1.6.9 in \cite{tao}), and we have that $x_t(\alpha)$ satisfies Eq. (1.1). 
Moreover, the time derivative of $x_t(\alpha)$ is precisely the expression of Eq. (\ref{integrand}) which we have already \jmhr{shown to be} continuous in $t$ for fixed $\alpha$; we conclude that  $x_t(\alpha)$ is continuously differentiable in $t$ for fixed $\alpha$. This proves the \jmhr{first implication of the lemma}.  

Conversely, suppose now $x_t(\alpha)$ is a solution of Eq. \eqref{mainiteration} which is continuously differentiable in $t$ for fixed $\alpha$. Since $\dot{x}_t(\alpha)$ is continuous in $t$ for fixed $\alpha$, we can apply the second fundamental theorem of calculus (formally, Theorem 1.6.7 in \cite{tao}) to get that $x_t(\alpha)$ satisfies Eq. (\ref{intform}). 
\end{proof}

\medskip

\begin{proof}[Proof of Theorem \ref{app-thm}] By Lemma \ref{lemma-int}, we are looking to establish existence and uniqueness for solutions of Eq. (\ref{intform}) which are continuous in $t$ for fixed $\alpha$, measurable in $\alpha$ for fixed $t$, and satisfy Eq. (\ref{trunc}). Fix $b$ such that $0< b < \min(\frac{1}{4W}, \frac{1}{2(W+4L)} )$, where recall that $L$ is the Lipschitz constant of $w(\cdot, \cdot, \cdot, \cdot, \cdot)$ in each of the last two arguments. \jmhr{We begin by showing that \ref{intform} admits a unique bounded solution on $[0,b]$.}

Define the function \jmhr{$q:[0,b] \times [0,1]\to [0,1]$ by  $q_t(\alpha) = \tilde x_0(\alpha)$}. Let \jmhr{then} ${\cal F}_0$ be the set of functions $y_t(\alpha)$
which map $[0,b] \times [0,1]$ into $\R$ \jmhr{and that} (i) satisfy $\jmhr{||y ||_{\infty,b}\leq 2}$ 
 (ii) are continuous in $t$ for each fixed $\alpha \in [0,1]$ (iii) are measurable in $\alpha$ for each fixed $t \geq 0$. \jmhr{Standard arguments show that ${\cal F}_0$ is a Banach space for the distance induced by the norm $||.||_{\infty,b}$}.

Next, define the operator $P$ on ${\cal F}_0$ by 
\begin{equation} \label{eq:def_integral_op_P}
[Py]_t(\alpha) = \jmhr{\tilde x_0}(\alpha) + \int_0^t \int_0^1 w(u, \alpha, \beta, y_u(\alpha), y_u(\beta)) (y_u(\beta) - y_u(\alpha) ~ d \beta ~ du. 
\end{equation}

\jmhr{Observe that $x$ is a solution of \eqref{intform} (on $[0,b]$) if and only if a it is a fixed point of $P$, $Px=x$. We now show that $P$ admits a unique fixed point. In this purpose, we establish that $P$ maps ${\cal F}_0$ into ${\cal F}_0$ and is a contraction mapping on ${\cal F}_0$. }

\jmhr{Remembering that $||\tilde x_0||_\infty \leq 1$ and that $||y||_{\infty,b} \leq 2$ holds for all $y\in {\cal F}_0$, we obtain from \eqref{eq:def_integral_op_P} that
\[ ||Py||_{\infty,b} \leq ||\tilde x_0||_\infty+    2 b W ||y||_{\infty,b} \leq 1 + 4 b W  \leq 2, \] where the last inequality follows from the definition of $b$. $Py$ satisfies thus condition (i) in the definition of ${\cal F}_0$.}
For condition (ii) on the continuity in $t$ of $Py$, observe that 
\[ \left|  [Py]_t(\alpha) - [Py]_s(\alpha) \right| \leq 2 |t-s| W ||y||_{\infty,b} \leq 4 W |t-s|. \] \jmhr{Finally, it is a consequence of Fubini's theorem that $Py$ is measurable in $\alpha$ for fixed $t$, and satisfies thus condition (iii).} Thus we have shown that $P$ maps ${\cal F}_0$ into ${\cal F}_0$.

\jmhr{Let us now prove that $P$ is a contraction on ${\cal F}_0$. We can rewrite} $[P y]_t(\alpha) - [P z]_t(\alpha)$ as 
\begin{small}
\begin{eqnarray*} 
%[P y]_t(\alpha) - [P z]_t(\alpha)  
&  & \int_0^t \int_0^1 w(u, \alpha, \beta, y_u(\alpha), y_u(\beta)) (y_u(\beta) -y_u(\alpha)) - w(u,\alpha, \beta, z_u(\alpha), z_u(\beta)) (z_u(\beta)-z_u(\alpha)) ~d \beta ~d u \\
& = &\int_0^t \int_0^1 \{ w(u, \alpha, \beta, y_u(\alpha), y_u(\beta)) \left(  (y_u(\beta) -y_u(\alpha))  -(z_u(\beta) -z_u(\alpha)) \right) \\
& & \hspace{.5cm} + \left(w(u, \alpha, \beta, y_u(\alpha), y_u(\beta)) - w(u,\alpha, \beta, z_u(\alpha), z_u(\beta)) \right) (z_u(\beta)-z_u(\alpha))  \} ~d \beta ~d u
\end{eqnarray*}
\end{small}% do not remove this '%'
\jmhr{Using the uniform bound $W$ on $w$ and its Lipschitz continuity with respect to the fourth and fifth argument, we obtain then that for every $\alpha$ and $t\in [0,b]$, there holds}
\begin{small}
\begin{eqnarray*} 
|[P y]_t(\alpha) - [P z]_t(\alpha)|
& \leq & \int_0^t \int_0^1 \{ W (| y_u(\alpha) - z_u(\alpha) | + | y_u(\beta) - z_u(\beta) |) \\ &&+ (L |y_u(\beta) - z_u(\beta)| + L| y_u(\alpha) - z_u(\alpha) |)(|z_u(\beta)-z_u(\alpha)|)\}~ d \beta ~ d u\\
& \leq & 2b(W + 4L) ||y-z||_{\infty,b},
\end{eqnarray*}\end{small}% do not remove this '%'
where we have used the fact that $|z_u(\beta)-z_u(\alpha)| \leq 2||z||_{\infty,b}\leq 4$ for every $z\in {\cal F}_0$ and $u\in [0,b]$. 
The definition of $b$ implies then that $P$ is a contraction from ${\cal F}_0$ to ${\cal F}_0$. \jmhr{Banach's fixed point theorem implies then that $P$ admits a unique fixed point in ${\cal F}_0$, which means  \eqref{intform} admits a unique solution in ${\cal F}_0$ on $[0,b]$. 
Since $b$ does not depend on time nor on the initial condition $\tilde x_0$, our result also proves the existence and uniqueness of a solution in ${\cal F}_{t^*}$ over $[t^*,t^*+b]$ for any $t^*$ and \quotes{initial} condition $\tilde x_{t^*}$, where ${\cal F}_{t^*}$ is defined analogously to ${\cal F}_0$. 
Repeatingly applying our argument, we can then construct a solution $x$ over $t\in [0,\infty)$ satisfying conditions 1-4 of the Theorem.}

\jmhr{To prove the uniqueness, suppose that  Eq.  (\ref{intform}) admits a solution $z$ with bounded truncated infinity norm that is different from the solution $x$ that we have constructed. Let $t^*= \sup\{t\geq 0: x_t = z_t\}$, where that latter set is non-empty because $z_0=x_0=\tilde x_0$. }
\jmhr{Since $x$ and $z$ are continuous with respect to $t$, there holds $x_{t^*}=z_{t^*}$. Moreover, such a $z$ would also be a solution of \eqref{mainiteration}, and it follows then from Lemma \ref{lem:x_[01]_Lips} (which is valid for any solution that does not explode in finite time) that $z_t(\alpha)\in [0,1]$ for every $\alpha$ and $t$.}
In particular, $||z_t||_\infty\leq 2$ for every $t\in [t^*,t^*+b]$, and $z$ would thus be in ${\cal F}_{t^*}$. However, our local existence and uniqueness argument applied to $t^*$ and $x_{t^*}$ shows that $x$ is the unique   solution of Eq.  (\ref{intform}) over $[t^*,t^*+b]$ that is in ${\cal F}_{t^*}$ and equal to $x_{t^*}$ at time $t^*$,
 so that $x_t=z_t$ should hold for all $t\in [t^*,t^*+b]$, in contradiction with our assumption.
\end{proof}

\section{Proof of Lemma \ref{lem:order+weak->a.e.}}
\label{sec:proof_lemma_order}

The proof relies on the following idea: if an agent opinion is \quotes{surrounded} by a positive mass of agent opinion (as is the case for almost all of them) and does not converge, its repeated displacement will result is repeated displacements of that mass surrounding it, which forbids the  convergence in distribution. We will need the following technical lemma.

\smallskip
\providecommand{\Leb}{\mathcal{L}}

\begin{lemma}\label{lem:lower_bound_set}
Suppose that the order of $x_t$ is preserved, and let $A\subseteq [0,1]$ be a set of positive measure. 
Then there exists a $\gamma\in A$ such that the set $\{\beta\in A: x_t(\beta) \leq 
x_t(\gamma)\hspace{.2cm} \forall t\}$ has a positive Lebesgue measure.
\end{lemma}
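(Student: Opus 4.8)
The plan is to reduce everything to the single measurable function $x_0$ restricted to $A$. First I would invoke the order-preservation hypothesis in the equivalent form recalled just above, namely that $x_0(\beta)\le x_0(\gamma)$ holds if and only if $x_t(\beta)\le x_t(\gamma)$ for all $t$; this lets me rewrite the set in the statement as $\{\beta\in A: x_0(\beta)\le x_0(\gamma)\}$. This latter set is $A\cap x_0^{-1}((-\infty,x_0(\gamma)])$, hence Lebesgue measurable since $x_0$ is measurable, so measurability of the set in the statement is immediate once $\gamma$ is fixed. It therefore suffices to produce some $\gamma\in A$ with $\Leb(\{\beta\in A: x_0(\beta)\le x_0(\gamma)\})>0$.

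To find such a $\gamma$, I would work with the threshold $s:=\inf\{v\in\R:\Leb(\{\beta\in A: x_0(\beta)\le v\})>0\}$. Because $x_0$ takes values in $[0,1]$ and $\Leb(A)>0$, the set over which this infimum is taken contains $v=1$ and is contained in $[0,\infty)$, so $s\in[0,1]$ is well defined; moreover that set is upward closed, so $\Leb(\{\beta\in A: x_0(\beta)\le v\})>0$ for every $v>s$. Now I would split into two cases. If there is a $\gamma\in A$ with $x_0(\gamma)>s$, then this $\gamma$ works directly. Otherwise every $\gamma\in A$ satisfies $x_0(\gamma)\le s$; writing $\{\beta\in A: x_0(\beta)<s\}=\bigcup_{n\ge1}\{\beta\in A: x_0(\beta)\le s-1/n\}$ displays it as a countable union of null sets (each $s-1/n$ lies below $s$, hence outside the defining set of $s$), so it is null, and therefore $\Leb(\{\beta\in A: x_0(\beta)=s\})=\Leb(A)>0$. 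In particular this set is nonempty; choosing $\gamma$ in it gives $x_0(\gamma)=s$ and $\{\beta\in A: x_0(\beta)\le x_0(\gamma)\}\supseteq\{\beta\in A: x_0(\beta)=s\}$, which has positive measure. Either way the desired $\gamma$ is obtained.

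I do not anticipate a genuine obstacle, but the \emph{case distinction around $s$} is the point that deserves care: one cannot simply take $\gamma$ to be an agent minimizing $x_0$ over $A$, since the essential infimum need not be attained, and even when it is the corresponding sublevel set can be null --- as in the example $A=[0,1]$, $x_0(\alpha)=\alpha$, where the only $\gamma$ attaining the minimum is $0$ yet $\{\beta: x_0(\beta)\le 0\}$ is null, while any $\gamma$ with $x_0(\gamma)>0$ does work. The dichotomy ``$x_0(\gamma)>s$ for some $\gamma\in A$'' versus ``$x_0\equiv s$ on a positive-measure subset of $A$'' is precisely what circumvents this.
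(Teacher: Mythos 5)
Your proposal is correct and follows essentially the same route as the paper's proof: reduce to time $0$ via order preservation, introduce the essential-infimum threshold ($s$ in your notation, $y^*$ in the paper's), and split into the case where some $\gamma\in A$ sits strictly above it versus the degenerate case where $x_0$ equals the threshold almost everywhere on $A$. Your two cases are logically equivalent to the paper's (some $\gamma$ has $x_0(\gamma)>s$ iff some $\gamma$ has $\Leb\{\beta\in A: x_0(\beta)<x_0(\gamma)\}>0$), so the arguments coincide in substance.
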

\begin{proof}
We first prove the existence of a $\gamma \in A$ such that $\{\beta\in A: x_0(\beta) \leq x_0(\gamma) \}$ has a positive Lebesgue measure, with the intention of showing later that this $\gamma$ can be used for establishing the statement of the lemma.

If there exists a $\gamma \in A$ such that the set $\{\beta\in A: x_0(\beta) < x_0(\gamma) \}$ (with a strict inequality) has a positive measure, then this $\gamma$ obviously satisfies our condition. 
Otherwise, it means that for every $\gamma \in A$, there holds
\begin{equation}\label{eq:zero_measure_below_gamma}
\Leb\{ \beta\in A: x_0(\beta) < x_0(\gamma)   \} = 0
\end{equation}
Let then $y^* := \inf \{y\in [0,1]: \Leb \{\beta\in N_\alpha^+, x_0(\beta) < y\}>0\}$. It follows from the definition of $y^*$ that the set of agents $\beta \in A$ having a value $x_0(\beta) < y^*$ has a zero measure. 
On the other hand, there is no agent $\gamma\in A$ for which 
$x_0(\gamma) >y^*$, for otherwise the definition of $y^*$ would imply that \eqref{eq:zero_measure_below_gamma} is not satisfied for that $\gamma$. Therefore, $x_0(\beta) = y^*$ must hold for every agent $\beta \in A$, except possibly those in a zero measure set (having a lower value). In particular, if we take any $\gamma$ outside that zero measure set, there holds $x_0(\gamma) = x_0(\beta)$ for almost every $\beta\in  A$. Since this set has a positive measure, we have thus shown the existence of a $\gamma \in A$ such that $\Leb \{\beta \in A: x_0(\beta) \leq x_0(\gamma) \} >0$ as in the first case treated.
 
Remember now that, since  the order of $x_t$ is preserved, there holds $x_t(\gamma) \geq x_t(\beta)$ for all $t$ if and only if $x_0(\gamma) \geq x_0(\beta)$. In particular, we have
$$
\{\beta\in A: x_0(\beta) \leq x_0(\gamma) \} = \{\beta\in A: x_t(\beta) \leq x_t(\gamma) \forall t \},
$$
so that our $\gamma$ satisfies the condition of this lemma.
\end{proof}

\smallskip

Let now $N$ be the set of agents $\alpha\in[0,1]$ for which $\lim_{t\to \infty}x_t(\alpha)$ does not exist, i.e. the set of the agents whose opinion does not converge, the measure of which we intend to prove is zero.

For every $\alpha \in N$, since $x_t(\alpha)$ remains in $[0,1]$ for all $t$, 
$\liminf _{t\to \infty} x_t(\alpha)$ and 
$\limsup _{t\to \infty} x_t(\alpha)$ are well defined, and the former is strictly smaller than the latter for otherwise $\lim_{t\to \infty}x_t(\alpha)$ would exist.
For every such $\alpha$, we let $I_\alpha := (\liminf _{t\to \infty} x_t(\alpha),$ $ \limsup _{t\to \infty} x_t(\alpha))$ be the open interval defined by the asymptotic lower and upper bounds on $x_t(\alpha)$.  The next lemma is instrumental to our proof and shows that the set $N_\alpha$ of agents $\beta\in N$ whose intervals $I_\beta$ intersect with $I_\alpha$ has a zero Lebesgue measure.

\smallskip

\begin{lemma}\label{lem:L(N_alpha)=0}
For every $\alpha\in N$, the set $N_\alpha= \{\beta: I_\beta \cap I_\alpha \neq \varnothing\}$ has a zero Lebesgue measure. 
\end{lemma}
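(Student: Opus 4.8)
The plan is a proof by contradiction: assuming $\mathcal{L}(N_\alpha)>0$, I will exhibit a single point $p$ at which the marginal $t\mapsto\mu_t\big((-\infty,p]\big)$ oscillates forever, contradicting the assumed convergence of $\mu_t$ to $\mu_\infty$ in distribution. The first move is to localize to one crossing point. For $\beta\in N_\alpha\subseteq N$ the interval $I_\beta$ is a nondegenerate open subinterval of $[0,1]$ meeting $I_\alpha$, hence $\mathcal{L}(I_\beta\cap I_\alpha)>0$; applying Tonelli's theorem to $(\beta,p)\mapsto \mathbf 1[p\in I_\beta]$ on $N_\alpha\times I_\alpha$ gives
\[ \int_{I_\alpha}\mathcal{L}\big(\{\beta\in N_\alpha: p\in I_\beta\}\big)\,dp \;=\;\int_{N_\alpha}\mathcal{L}(I_\beta\cap I_\alpha)\,d\beta\;>\;0 , \]
so the set of $p\in I_\alpha$ for which $S_p:=\{\beta\in N_\alpha: p\in I_\beta\}$ has positive measure is itself of positive Lebesgue measure. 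Intersecting it with the set of continuity points of $\mu_\infty$ inside $I_\alpha$ (all but countably many points), I fix such a $p$ and write $S:=S_p$, so that $\mathcal{L}(S)>0$, $p$ is a continuity point of $\mu_\infty$, and every $\beta\in S$ satisfies $\liminf_{t\to\infty} x_t(\beta)<p<\limsup_{t\to\infty} x_t(\beta)$. Measurability of $S$ and of $\beta\mapsto\big(\liminf_t x_t(\beta),\limsup_t x_t(\beta)\big)$ is routine, exactly as in the proof of Theorem \ref{densityconvergence}.

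Next I extract two ``threshold agents'' of $S$ trapping a positive mass between them. Applying Lemma \ref{lem:lower_bound_set} to $S$ produces $\gamma^{\mathrm{top}}\in S$ with $M:=\{\beta\in S: x_t(\beta)\le x_t(\gamma^{\mathrm{top}})\ \forall t\}$ of positive measure, and applying the mirror image of Lemma \ref{lem:lower_bound_set} (its proof with all inequalities reversed) to $M$ produces $\gamma^{\mathrm{bot}}\in M$ with $M':=\{\beta\in M: x_t(\beta)\ge x_t(\gamma^{\mathrm{bot}})\ \forall t\}$ of positive measure. Thus $\gamma^{\mathrm{bot}},\gamma^{\mathrm{top}}\in S$, $x_t(\gamma^{\mathrm{bot}})\le x_t(\gamma^{\mathrm{top}})$ for all $t$, and $M'\subseteq\{\beta: x_t(\gamma^{\mathrm{bot}})\le x_t(\beta)\le x_t(\gamma^{\mathrm{top}})\ \forall t\}$.

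Finally I read off the contradiction. Put $F_t:=\mu_t\big((-\infty,p]\big)=\mathcal{L}\big(\{\beta: x_t(\beta)\le p\}\big)$. Since $\gamma^{\mathrm{bot}}\in S$ we have $\limsup_{t\to\infty} x_t(\gamma^{\mathrm{bot}})>p$, so at arbitrarily large times $t$ one has $x_t(\gamma^{\mathrm{bot}})>p$, and then $\{x_t\le p\}\subseteq\{x_t<x_t(\gamma^{\mathrm{bot}})\}$, whence $F_t\le R^-:=\mathcal{L}\big(\{\beta: x_t(\beta)<x_t(\gamma^{\mathrm{bot}})\}\big)$; by preservation of the order this set, hence $R^-$, does not depend on $t$, so $\liminf_{t\to\infty}F_t\le R^-$. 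Symmetrically, since $\gamma^{\mathrm{top}}\in S$ we have $\liminf_{t\to\infty} x_t(\gamma^{\mathrm{top}})<p$, so at arbitrarily large times $t$ one has $x_t(\gamma^{\mathrm{top}})<p$, and then $\{x_t\le x_t(\gamma^{\mathrm{top}})\}\subseteq\{x_t\le p\}$, whence $F_t\ge R^+:=\mathcal{L}\big(\{\beta: x_t(\beta)\le x_t(\gamma^{\mathrm{top}})\}\big)$, again a $t$-independent quantity, so $\limsup_{t\to\infty}F_t\ge R^+$. But for any fixed $t$, since $x_t(\gamma^{\mathrm{bot}})\le x_t(\gamma^{\mathrm{top}})$,
\[ R^+-R^- \;=\;\mathcal{L}\big(\{\beta: x_t(\gamma^{\mathrm{bot}})\le x_t(\beta)\le x_t(\gamma^{\mathrm{top}})\}\big)\;\ge\;\mathcal{L}(M')\;>\;0 . \]
Hence $\limsup_{t\to\infty}F_t-\liminf_{t\to\infty}F_t\ge\mathcal{L}(M')>0$, contradicting $F_t\to\mu_\infty\big((-\infty,p]\big)$, which holds by the assumed convergence in distribution together with $\mu_\infty(\{p\})=0$ (Portmanteau).

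The step I expect to be the genuine obstacle is the extraction of the two threshold agents: a single agent of $S$ that oscillates across $p$ only forces $F_t$ to oscillate by the measure of the set of agents sharing its exact trajectory, which may well be zero, so one really needs a \emph{pair} $\gamma^{\mathrm{bot}},\gamma^{\mathrm{top}}$ enclosing a positive mass of $S$. Recognizing that two nested applications of Lemma \ref{lem:lower_bound_set} (and its mirror) deliver exactly such a pair is the crux; the remainder is bookkeeping — measurability of the sets in play and the $t$-independence of the ``rank'' quantities $R^\pm$ under order preservation.
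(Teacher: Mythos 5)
Your proof is correct, and it takes a genuinely different route from the paper's, although both arguments ultimately rest on the same two pillars: Lemma \ref{lem:lower_bound_set} (to trap a positive mass of agents between order-comparable ``threshold'' agents) and order preservation (to make the rank quantities $t$-independent). The paper splits $N_\alpha$ into $N_\alpha^+=\{\beta\in N_\alpha: x_t(\beta)\ge x_t(\alpha)\ \forall t\}$ and its mirror, uses $\alpha$ itself as one of the two bracketing agents, applies Lemma \ref{lem:lower_bound_set} once per half to get the other, deduces $\liminf_t x_t(\gamma)<\limsup_t x_t(\alpha)$ from $I_\gamma\cap I_\alpha\neq\varnothing$ together with $x_t(\gamma)\ge x_t(\alpha)$, and then tests convergence in distribution against a continuous piecewise-linear function $f$ supported on the overlap (the integral form of Portmanteau). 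You instead localize via Tonelli to a single crossing point $p\in I_\alpha$ lying in $I_\beta$ for a positive-measure set $S_p$, extract \emph{both} bracketing agents from $S_p$ by two nested applications of Lemma \ref{lem:lower_bound_set} (one mirrored), and test convergence against the distribution function at the continuity point $p$ (the set form of Portmanteau). Your version avoids the $N_\alpha^+/N_\alpha^-$ case split and makes the oscillation of $t\mapsto\mu_t((-\infty,p])$ very explicit, at the modest cost of the Tonelli step and of checking that $p$ can be chosen a continuity point of $\mu_\infty$ (free, since atoms are countable); the paper's version is marginally more economical in the extraction step because $\alpha$ serves as one threshold for free. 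Your closing remark correctly identifies the crux: a single oscillating agent does not suffice, and the two-sided extraction of a positive enclosed mass is exactly what makes the argument work in both treatments.
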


\smallskip

\begin{proof}
Consider an $\alpha$. It is convenient to first treat the set $N_\alpha^+ = \{\beta\in N_\alpha, x_t(\beta) \geq x_t(\alpha)\hspace{.2cm} \forall t\}$. Suppose, to obtain a contradiction, that $N_\alpha^+$ has a positive Lebesgue measure. Lemma \ref{lem:lower_bound_set} 
implies then the existence of a $\gamma\in N^+_\alpha$ such that the set $\{\beta\in N_\alpha^+: x_t(\beta) \leq x_t(\gamma) \forall t \}$ has a positive measure, which directly implies that 
the set of agents whose opinion is between those of $\alpha$ and $\gamma$ also has a positive measure:
\begin{equation}\label{eq:positive_measure_between_a_gamma}
\Leb\{\beta: x_t(\alpha) \leq x_t(\beta) \leq x_t(\gamma), \forall t\}.
\end{equation}
Besides, the appartenance of $\gamma$ to $N_\alpha^+$ implies that the open intervals $I_\alpha = (\lim$ $\inf_{t\to\infty} x_t(\alpha),$ $\limsup_{t\to\infty} x_t(\alpha))$ and $I_\gamma = (\lim \inf_{t\to\infty} x_t(\gamma),$ $\limsup_{t\to\infty} x_t(\gamma))$ have a nonempty intersection and that  $x_t(\gamma)\geq x_t(\alpha)$ for all $t$, which is only possible if $\liminf_{t\to\infty} x_t(\gamma) < \limsup_{t\to\infty} x_t(\alpha)$. 

Take then $y_1,y_2\in [0,1]$ such that $\liminf_{t\to\infty} x_t(\gamma) < y_1<y_2<\limsup_{t\to\infty} x_t(\alpha)$, and let $f:\Re\to \Re$ be a function taking the value 1 for every $y\leq y_1$, $0$ for every $y\geq y_2$, and decreasing linearly between 1 and 0 for $y\in [y_1,y_2]$. This function is continuous, and it follows thus from the convergence of $x$ in distribution that $\lim_{t\to \infty} \int f(x_t(\beta)) d\beta$ exists (see Portmanteau's Theorem, for example in \cite{b86}). We will show that this creates a contradiction.

By definition of $y_1 > \liminf_{t\to\infty} x_t(\gamma)$, there exists a diverging sequence of times $\bar s_k$  at which $x_{\bar s_k}(\gamma) < y_1$. Consider such a time. Since $f$ is nonnegative, there holds
$$
\int f(x_{\bar s_k}(\beta))  d\beta \geq  \int_{\beta: x_{\bar s_k}(\beta) \leq y_1} f(x_{\bar s_k}(\beta)) = \Leb \{\beta: x_{\bar s_k}(\beta) \leq y_1\}\geq \Leb \{\beta: x_{\bar s_k}(\beta) \leq x_{\bar s_k}(\gamma)\},
$$
where the equality comes from the definition of $f$ and the second inequality comes from  $x_{\bar s_k}(\gamma) < y_1$.  Now remember that by the order preservation property, $x_{\bar s_k}(\beta) \leq x_{\bar s_k}(\gamma)$ if and only if $x_{t}(\beta) \leq x_{t}(\gamma)$ for all $t$. We have thus
\begin{equation}\label{eq:lbound_int_f}
\int f(x_{\bar s_k}(\beta))  d\beta \geq \Leb \{\beta: x_t(\beta) \leq x_t(\gamma) \hspace{.2cm}\forall t\}
\end{equation}

Similarly, there exists a diverging sequence of times $\underline s_k$  arbitrarily large times $t$  at which $x_{\underline s_k}(\alpha) > y_2$. Consider such a time.  Since $f(y)=0$ for $y>y_2$ and $f(y)\leq 1$ for all $y$, there holds
$$
\int f(x_{\underline s_k}(\beta))  d\beta =  \int_{\beta: x_{\underline s_k}(\beta) \leq y_2} f(x_{\underline s_k}(\beta)) \leq \Leb \{\beta: x_{\underline s_k}(\beta) \leq y_2\}, 
$$
The inequality $x_{\underline s_k}(\alpha) > y_2$, and the order preservation property imply then that 
\begin{equation}\label{eq:ubound_int_f}
\int f(x_{\underline s_k}(\beta))  d\beta \leq  \Leb \{\beta: x_{\underline s_k}(\beta) < x_{\underline s_k}(\alpha)\} = \Leb \{\beta: x_t(\beta) < x_t(\alpha) \hspace{.2cm}\forall t\}.
\end{equation} 
Now, since $\lim_{t\to\infty} \int f(x_t(\beta)) d\beta$ exists, and both sequences $\bar s_k$ and $\underline s_k$ diverges, the lower bound in \eqref{eq:lbound_int_f} and upper bound in \eqref{eq:ubound_int_f} must be equal:
$$
\Leb \{\beta: x_t(\beta) \leq x_t(\gamma) \hspace{.2cm}\forall t\} =  \Leb \{\beta: x_t(\beta) < x_t(\alpha) \hspace{.2cm}\forall t\}.
$$
Remembering that, due to the order preservation, either $x_t(\beta) < x_t(\alpha)$ for all $t$, or $x_t(\beta) \geq x_t(\alpha)$ for all $t$, the equality above implies that  $\Leb \{\beta: x_t(\alpha) \leq x_t(\beta) \leq x_t(\gamma)\} = 0$, in contradiction with \eqref{eq:positive_measure_between_a_gamma}. We must thus have $\Leb(N_\alpha^+)= 0$.

A symmetric argument shows that the set $N_\alpha^- = \{\beta\in N_\alpha, x_t(\beta) \leq x_t(\alpha)\hspace{.2cm} \forall t\}$. Using again the fact that either $x_t(\beta) < x_t(\alpha)$ for all $t$, or $x_t(\beta) \geq x_t(\alpha)$ for all $t$, we obtain $N_\alpha = N_\alpha^+\cup N_\alpha^-$, so that  $\Leb(N_\alpha)=0$.\end{proof}
\smallskip

We can now prove Lemma \ref{lem:order+weak->a.e.}

\smallskip
\begin{proof}
Remember that the open interval $I_\alpha := ( \liminf _{t\to \infty} x_t(\alpha), \limsup _{t\to \infty} x_\alpha(t))$ has a positive length for every $\alpha \in N$ since $x_t(\alpha)$ does not converge for any $\alpha \in N$. For every $\lambda>0$, let $N^\lambda$ be the set of agents $\alpha \in N$ for which $I_\alpha$ has a length larger than $\lambda$. Since $N$ can be written as a countable union of $N^\lambda$ (just take $\bigcup_m N^{1/m}$ for example), it has a positive measure if and only if $N^\lambda$ has a positive measure for at least one $\lambda>0$. We show that this is impossible.

Suppose indeed that $\Leb(N^\lambda )>0$, and chose an $\alpha_0\in N^\lambda$. Let then $N_1^\lambda:=\{\beta\in N^\lambda:I_\beta \cap I_{\alpha_0} = \varnothing  \}$. It follows 
from Lemma \ref{lem:L(N_alpha)=0} 
that $\Leb(N_1^\lambda) = \Leb(N^\lambda)>0$. On the 
other hand, since $I_{\alpha_0}$ has a length at least $\lambda$, and since $I_\beta \subseteq [0,1]$ for every $\beta\in N$, there holds
$
\bigcup_{\beta \in N_1^\lambda} I_\beta \subseteq [0,1]\setminus I_{\alpha_0},
$
so that the measure of $\bigcup_{\beta \in N_1^\lambda} I_\beta$ is at most $1-\lambda$. Since $N_1^\lambda$ has a positive measure, it is non-empty, and we can chose an $\alpha_1\in N_1^{\lambda}$. Let then $N_2^\lambda := \{\beta\in N_1^\lambda:I_\beta \cap I_{\alpha_1} = \varnothing  \}$. Again, it follows from Lemma \ref{lem:L(N_alpha)=0} 
that $\Leb(N_2^\lambda) = \Leb(N_1^\lambda) = \Leb(N^\lambda)>0$, and from the definitions of $N_1^\lambda,N_2^\lambda$ that $
\bigcup_{\beta \in N_2^\lambda} I_\beta \subseteq [0,1]\setminus (I_{\alpha_0}\cup I_{\alpha_1}).
$
Moreover, since $I_{\alpha_0}$ and $I_{\alpha_1}$ are disjoint by construction (remember indeed that $\alpha_1\in N_1^\lambda$), the Lebesgue measure of  $\bigcup_{\beta \in N_2^\lambda}I_\beta$ is at most $1-2\lambda$. We can then continue defining sets $N_k^\lambda$ recursively, keeping $\Leb(N_k^\lambda) = \Leb(N^\lambda)>0$ while having a Lebesgue measure at most $1-k\lambda$ for the set $\bigcup_{\beta \in N_k^\lambda}$. This is however impossible since that measure would be negative for $k>1/\lambda$. Therefore $N^\lambda$ must have a measure zero for every $\lambda >0$, and so has thus the set $N$ of agents whose opinion does not converge, which achieves the proof.\end{proof}

\end{document}